\newtheorem{theorem}{Theorem}
\newtheorem{lemma}[theorem]{Lemma}
\newtheorem{corollary}[theorem]{Corollary}
\newtheorem{remark}[theorem]{Remark}
\newtheorem{assumption}[theorem]{Assumption}
\newtheorem{example}[theorem]{Example}
\Crefname{assumption}{Assumption}{Assumptions}
\DeclareMathOperator{\diag}{diag}
\DeclareMathOperator{\supp}{supp}
\DeclareMathOperator{\prox}{prox}
\DeclareMathOperator*{\argmin}{argmin}
\newcommand{\abs}[1]{| #1 |}
\newcommand{\nor}[1]{\left\Vert #1 \right\Vert}
\newcommand{\norm}[2]{\left\Vert #1 \right\Vert_{ #2 }}
\DeclareMathOperator{\Id}{Id}
\newcommand{\X}{\mathcal{X}}
\newcommand{\Y}{\mathcal{Y}}
\newcommand{\PP}{\mathcal{P}}
\newcommand{\DD}{\mathcal{D}}
\newcommand{\CC}{\mathcal{C}}
\newcommand{\Ps}{\mathcal{P}^{\sol}}
\newcommand{\Ds}{\mathcal{D}^{\sol}}
\newcommand{\bbs}{{b}^\star}
\newcommand{\bb}{b^\delta}
\newcommand{\LL}{\mathcal{L}}
\newcommand{\sol}{\star}
\newcommand{\LLs}{\mathcal{L}^{\sol}}
\newcommand{\LLd}{\mathcal{L}^{\delta}}
\newcommand{\Ss}{\mathcal{S}}
\newcommand{\seq}[1]{\left(#1\right)}
\newcommand{\xs}{{x}^\star}
\newcommand{\ys}{{y}^\star}
\newcommand{\zs}{{z}^\star}
\newcommand{\T}{T}
\newcommand{\bbR}{\mathbb{R}}
\newcommand{\R}{\mathbb{R}}
\newcommand{\N}{\mathbb{N}}
\newcommand{\paragraphfont}[1]{\textit{{#1}}}
\definecolor{linkcolor}{RGB}{83,83,182}
\definecolor{citecolor}{RGB}{128,0,128}
\newcommand\blfootnote[1]{%
  \begingroup
  \renewcommand\thefootnote{}\footnote{#1}%
  \addtocounter{footnote}{-1}%
  \endgroup
}
\begin{document}

\title{Iterative regularization for low complexity regularizers}

\author{Cesare Molinari\textsuperscript{1}, Mathurin Massias\textsuperscript{2}, Lorenzo~Rosasco\textsuperscript{2,3,4}, Silvia Villa\textsuperscript{1}
}

\date{
    \textsuperscript{1}MaLGa, DIMA, Universit\`a di Genova \\
    \textsuperscript{2}MaLGa, DIBRIS, Universit\`a di Genova \\
    \textsuperscript{3} Center for Brains, Minds and Machines, MIT  \\
    \textsuperscript{4}  Istituto Italiano di Tecnologia \\
}

\maketitle

\begin{abstract}
	Iterative regularization exploits the implicit bias of an optimization algorithm to regularize ill-posed problems.
    Constructing algorithms with such built-in regularization mechanisms is a classic challenge in inverse problems but also  in modern machine learning, where it provides both a new perspective on algorithms analysis, and significant speed-ups compared to explicit regularization.
	In this work, we propose and study the first iterative regularization procedure able to handle biases described by non smooth and non strongly convex functionals,  prominent in low-complexity regularization.
	Our approach is based on a primal-dual algorithm  of which we analyze convergence and stability properties, even in the case where the original problem is unfeasible. The general results are illustrated considering the special case
 of sparse recovery with the $\ell_1$ penalty.
	Our theoretical results are complemented by experiments showing the computational benefits of our approach.
\end{abstract}

\blfootnote{
This material is based upon work supported by the Center for Brains, Minds and Machines (CBMM), funded by NSF STC award CCF-1231216.
L. R. acknowledges the financial support of the European Research Council (grant SLING 819789), the AFOSR projects FA9550-18-1-7009, FA9550-17-1-0390 and BAA-AFRL-AFOSR-2016-0007 (European Office of Aerospace Research and Development), and the EU H2020-MSCA-RISE project NoMADS - DLV-777826.
C. M. e S. V. are members of the INDAM-GNAMPA research group.}
\section{Introduction}

Parameters of machine learning models are frequently estimated by minimizing the sum of a data fidelity term and a regularization term: the datafit ensures that the model learns from the training data while the regularizer enforces %
good generalization \cite{shai}.
In this \emph{explicit} regularization framework, the regularization strength is controlled by a scalar parameter balancing the two terms.
To tune it, the most popular approach is grid-search: a grid of values is chosen, for each of which a model is obtained by solving the corresponding optimization problem \cite[Chap. 7]{Hastie_Tibshirani_Friedman09}.
Amongst these models, the best is then selected as the one minimizing a given criterion, such as AIC  \cite{Akaike74}, BIC \cite{Schwarz78} or error on left-out data \cite{Devroye_Wagner1979}.
The drawback of this widely used procedure is its cost: it requires solving as many optimization problems as regularization parameters on the grid.

In the wake of the practical successes of deep learning, there has been a recent surge of interest for an alternative, namely \emph{iterative} regularization.
Contrary to explicit regularization, it consists in solving a single optimization problem: the regularization is built into an iterative algorithm, and the regularization strength is controlled by the number of iterations \cite{kaltenbacher2008iterative}. %
Since a single problem is solved, and the algorithm typically stopped before convergence, iterative regularization can provide great computational speed-ups compared to explicit regularization.
It is closely related to implicit regularization, which refers to the fact that an algorithm is biased towards certain solutions of the problem it solves \cite{chizat2020implicit,gunasekar2017implicit}.
As a seminal example, under-determined least squares have infinitely many solutions, yet gradient descent initialized at zero converges to the minimal Euclidean norm one \cite[Chap. 6]{engl1996regularization}.
In a potentially complex loss landscape, this guides the search amongst all solutions to a specific one, allowing iterative regularization to be developed for the squared norm regularizer  \cite{yao2007early,raskutti,paglia}.
A question arises: for other regularizers, how to find an algorithm with adequate bias and iterative regularization properties?
In the case of strongly convex regularizers, iterative regularization has been investigated in two lines of work: the first one is based on mirror descent \cite{gunasekar2018characterizing,vavskevivcius2020statistical}, which can be viewed as dual gradient descent \cite{matet2017don}.
The second one, arising from the imaging community, is called linearized Bregman iterations \cite{cai2009convergence,yin2010analysis}.

However, many regularizers of interest are not strongly convex.
This is the case of so-called \emph{low complexity} regularizers: following pioneering work on the $\ell_1$ norm \cite{Chen_Donoho_Saunders98,Tibshirani96}, regularizers such as the nuclear norm \cite{Fazel02}, group norms \cite{Obozinski_Taskar_Jordan10} or Total Variation \cite{Rudin_Osher_Fatemi92} have been extensively used to obtain models exhibiting some notion of sparsity \cite{iutzeler2020nonsmoothness}.
In the explicit regularization framework, they have had a tremendous impact on machine learning \cite{Hastie_Tibshirani_Wainwright15}.
To use them in iterative regularization, some approaches exist, but they either are tailored to the $\ell_1$-norm \cite{Vaskevicius_Kanade_Rebeschini19}, or require tuning additional parameters \cite{yin2010analysis,YinOshBur08}.
Devising a generic and practical iterative regularization procedure for convex regularizers is thus still an open problem.
In this work,
\begin{itemize}[topsep=4pt]
	\item  we propose the first implementable iterative regularization procedure applicable to non smooth, non strongly convex regularizers,
	\item in the presence of noise, we derive a stopping time varying as the inverse of the noise level, in accordance with known results for strongly convex regularizers, %
	\item we handle the use of approximate computations and preconditioning in the algorithm,
	\item we provide a deeper analysis when the regularizer is the $\ell_1$ norm, and we obtain model recovery results,
	\item we validate our approach numerically and provide an open source python package.
\end{itemize}
\vspace{3mm}

The structure of the paper is as follows: we first formalize in \Cref{sec:background} the problem at hand and detail the notions of explicit and iterative regularization.
In \Cref{sec:framework} we present the algorithm we use for iterative regularization and the setup under which we analyze it.
In \Cref{sec:inexact}, we state our main result: stability in the presence of noise and a stopping time for iterative regularization.
\Cref{sec:related_works} contains a detailed comparison of our results to existing approaches.
\Cref{sec:ell1} is devoted to deeper results in the case of sparse recovery with the $\ell_1$ norm.
In \Cref{sec:unfeas} we study some cases of iterative regularization where the solution to the problem does not exist.
Experiments in \Cref{sec:experiments} demonstrate the validity of the approach.

\vspace{3mm}
\paragraph{Notation}
Let $\X$ be a real Hilbert space. For $\varepsilon \geq 0$, the $\varepsilon$-subdifferential of the function $f$ at the point $x\in\X$ is the set $\partial_\varepsilon f(x) = \{u \in \X : \forall y \in \X, f(x) - f(y) \leq \langle u, x - y \rangle + \varepsilon \}$; for $\varepsilon=0$ we write $\partial f(x)$.
For a symmetric positive definite $T$, $\norm{x}{T}^2 := \langle T^{-1}x, x\rangle$.
The $T$-preconditioned proximal operator of $f$ at $x$ is $\prox^T_f(x) = \argmin_{x'\in\X} f(x') + \frac{1}{2} \nor{x - x'}^2_T.$
The set of proper, convex and closed functions on the space $\X$ is denoted by $\Gamma_0(\X)$.
For a convex function $R$, $x'\in\X$ and $\theta \in \partial R(x')$, the Bregman divergence induced by $R$ with subgradient $\theta$ is defined as $D_R^\theta (x, x') = R(x) - R(x') - \langle \theta, x - x' \rangle$.
When $R$ is differentiable, its subdifferential at $x'$ reduces to $\{\nabla R(x')\}$ and thus, for the Bregman divergence, we omit the $\theta$ superscript.
The pointwise multiplication between vectors, or row-wise multiplication between a vector and a matrix, is denoted $\odot$.

\section{Background on explicit and iterative regularization}\label{sec:background}

As one motivation for our setting, consider the problem of learning a mapping $f$ between observations $(a_i, b_i)_{i \in [n]} \in \mathbb{R}^p \times \mathbb{R}$ such that $f(a_i) = b_i$.
In the case where the hypothesis space consists of linear functions, this amounts to learning a vector $x$ such that $\langle a_i, x \rangle = b_i$ for all $i \in [n]$, which, introducing the design matrix $A = (a_1^\top, \ldots, a_n^\top)^\top \in \mathbb{R}^{n \times p}$, means to solve
\begin{equation}\label{linsyst}
    Ax = b \enspace .
\end{equation}
Such inverse problems are ubiquitous in machine learning, signal processing and image processing.
Recent successful analyses of deep learning also consider linear approximations of kind \cite{montanari}.
It is common that the solution to \cref{linsyst} is not unique, for example in the overparametrized setting when $p > n$, common in machine learning, statistics and signal processing.
In this situation, amongst all possible solutions, it is popular to favor a particular one, e.g. considering:
\begin{equation}\label{eq:exact}
	\min_{x \in \X} R(x)  \quad \text{s.t.} \quad Ax = b \enspace,
\end{equation}
where the regularizer $R$ (also called penalty, or bias) selects the solutions of interest.
In this work, we are interested in a special type of regularizers, low complexity ones, which force the solution $x$ to lie on a reduced subset of the space, for instance on a lower dimensional manifold.
Critically, these regularizers are neither smooth, nor strongly convex (see, for instance, \cite{vaiter2015low,bach2012structured,iutzeler2020nonsmoothness}).
In \Cref{ex:sparse,ex:low_rank,ex:tv}, we recall some well-known examples;
other notable examples include the $\ell_\infty$ norm \cite{elvira2020safe}, ordered $\ell_1$ penalties \cite{figueiredo2016ordered} or block-sparse penalties \cite{Obozinski_Taskar_Jordan10,Simon_Friedman_Hastie_Tibshirani12}.

\begin{example}[Sparse regression and classification]\label{ex:sparse}
	When $\X = \R^p$, choosing $R(\cdot) = \nor{\cdot}_1$ corresponds to finding the minimal $\ell_1$-norm solution to a linear system, and in this case \eqref{eq:exact} is known as Basis Pursuit \cite{Chen_Donoho_Saunders98}.
	Following the practical success of compressed sensing \cite{Candes_Romberg_Tao06,Donoho06}, 	$\ell_1$-based approaches have had a tremendous impact in imaging, signal processing and machine learning in the last decades (see \cite{Hastie_Tibshirani_Wainwright15} for a review).
	Problem \eqref{eq:exact} also encompasses classification with the following rewriting:
	if one searches for the minimal $f$-valued separator to a linearly separable dataset $(a_i, b_i)$, the problem is:
	\begin{equation}\label{eq:classif}
        \min_{x \in \R^p} f(x) \quad \text{s.t.} \quad (b \odot A) x \succeq 1_n \enspace.
    \end{equation}
    Introducing a slack variable $u = (b \odot A) x - 1_n$, \eqref{eq:classif} fits in the framework of problem \ref{eq:exact} using $\tilde x =
    	\begin{pmatrix} x, u \end{pmatrix},\  R(\tilde x) = f(x) + \iota_{\{\cdot \succeq 0\}}(u)$, $\tilde A = \begin{pmatrix} b \odot A,  & - \Id
    	\end{pmatrix}$ and $\tilde b = 1_n$.
\end{example}
\begin{example}[Low rank matrix completion]\label{ex:low_rank}
	In many practical applications, such as recommender systems, one seeks to recover a partially observed matrix $B$ based on the assumption that its rank is low \cite{Fazel02,Candes_Recht09}.
	A convex approach to this problem is:
	\begin{equation}\label{pb:low_rank}
		\min_{X \in \bbR^{p_1 \times p_2}} \nor{X}_* \quad \text{s.t.} \quad X_{ij} = B_{ij}  \quad   \forall (i, j) \in \mathcal{D} \enspace,
	\end{equation}
	where $\nor{\cdot}_*$ is the nuclear norm and $\mathcal{D} \subset [p_1] \times [p_2]$ is the set of observed entries of the matrix $B$.
	In that case, $A$ is a self adjoint linear operator from $\mathbb{R}^{p_1 \times p_2}$ to $\bbR^{p_1 \times p_2}$, such that $(A X)_{ij}$ has value $X_{ij}$ if $(i, j) \in \mathcal{D}$ and 0 otherwise;
	the constraints write $A X = A B$.
\end{example}

\begin{example}[Total Variation]\label{ex:tv}
	In imaging tasks such as deblurring and denoising, regularization via Total Variation allows to simultaneously  preserve edges while removing noise in flat regions \cite{Rudin_Osher_Fatemi92}.
	Given a blurring operator $A: \ \X \to \X$, the problem of Total Variation is:
	\begin{equation}\label{eq:tv}
	    \min_{X \in \R^{p_1 \times p_2}} \nor{\nabla X}_{2,1} \text{ s.t. } A X = B\enspace.
	\end{equation}
	The above problem can be re-written as:
	\begin{equation}\label{pb:tv_v2}
		\min_{\tilde X \in \R^{(p_1 + p_2) \times p_2}} \; \Omega(\tilde X) \quad \text{s.t.} \quad  \tilde A \tilde X = \tilde B \enspace,
	\end{equation}
	with $\tilde X =
	\begin{pmatrix} X \\ U \end{pmatrix},\  \Omega(\tilde X) = \Vert U \Vert_{2,1}, \ \tilde A = \begin{pmatrix} A  &0 \\ \nabla &- \Id \end{pmatrix}$ and $\tilde B = \begin{pmatrix} B \\ 0 \end{pmatrix}$.
	To avoid increasing the dimension of the problem, one can also consider directly problem \eqref{eq:tv} and compute the proximal operator of TV iteratively, in which case it is necessary to handle errors in the prox as we will in \Cref{eq:x_update_is_prox} \cite{villa2013accelerated}.
\end{example}

Solving problem \eqref{eq:exact} thus allows to restrict the search of a solution to \Cref{linsyst} to a specific simple subset of the ambient space.
In practice, however, it is frequent that the observations are corrupted by noise: the true observations are only available through a noisy version $\bb$.
To avoid fitting the noise in the data, one should no longer impose the constraint $Ax = \bb$ and the approach \eqref{eq:exact} must be modified.
Explicit regularization consists in relaxing the equality constraint into a penalization, and solving a composite optimization problem:
\begin{equation}\label{eq:tikhonov}
	\min_{x \in \X} \frac12 \big\Vert Ax - \bb\big\Vert^2 + \lambda R(x) \enspace,
\end{equation}
where the nonnegative scalar $\lambda$ controls the trade-off between fitting the data and regularizing the solution.
As mentioned in the introduction, selecting the correct value for $\lambda$ is computationally costly.

Alternatively, it is possible to exploit the implicit bias of an optimization algorithm.
As a classical example, it is well-known \cite[Chap 6]{engl1996regularization} that iterations of gradient descent on least-squares,
\begin{equation}
    x_{k+1} = x_k - \gamma A^* (Ax_k - b) \enspace,
\end{equation}
converge\footnote{If initialized at 0 and provided $\gamma < 2 / \nor{A}_{\text{op}}^2$.} to the solution of \eqref{eq:exact} with $R = \frac{1}{2}\nor{\cdot}^2$.
When applied to $b=b^\delta$, iterative regularization consists in stopping gradient descent iterates before convergence.
What controls the regularization strength in this case is the number of iterations performed \cite{kaltenbacher2008iterative}.
Typically, when the noise level is of order of magnitude $\delta$, one seeks an implicitly biased algorithm and a stopping time $k(\delta)$ such that the algorithm, applied to $b^\delta$, produces iterates $(x_k)$ satisfying:
\begin{equation}
    D(x_{k(\delta)}, \xs) \leq \mathcal{O}(\delta^\alpha) \enspace,
\end{equation}
where $D$ is some discrepancy measure and $\xs$ is a solution of \eqref{eq:exact} with exact data.
As discussed next, it is the contribution of this paper to provide an algorithm, a stopping time and guarantees for a generic non-smooth convex regularizer $R$.

\section{Algorithm and assumptions}
\label{sec:framework}
In this section we present the algorithm we study and the mathematical assumptions we consider.
\subsection{Algorithm}
Let $\X$ and $\Y$ be real Hilbert spaces, $A\colon \X\to\Y$ a linear and bounded operator,  and $b\in\Y$.
Generalizing the above discussion, we consider the  following minimization problem, %
\begin{equation}\label{eq:our_pb}
    \min_{x \in \X} R(x) + F(x) \quad \text{s.t.} \quad Ax = b \enspace.
\end{equation}
The functions $R$ and $F$ are both assumed to be convex, proper, and lower-semicontinuous. In addition,  $F$ is  differentiable.
Compared to \eqref{eq:exact}, the splitting between a nonsmooth and a smooth term  allows us to handle the smooth term $F$ using only its gradient.

Let $\bbs \in \Y$ denote the exact observation, typically unavailable, and $\bb \in \Y$ denote the accessible noisy data.
We study a worst-case situation; namely, for some $\delta\geq 0$, we assume that
\begin{equation}
	\| \bb-\bbs \| \leq \delta \enspace.
\end{equation}

The algorithm we consider for iterative regularization is a preconditioned and inexact version of a  three steps primal-dual procedure  \cite{chambolle2011first,Condat13,Vu13} applied to the noisy data $b^\delta$. %
Given initializations $y_{-1}, \ y_0\in\Y$ and $x_0\in\X$, consider
\begin{align}\label{eq:algo}
    \begin{cases}
	\tilde{y}_{k} = 2 y_k - y_{k - 1} \enspace, \\
	x_{k+1} = \prox^{T, \, \varepsilon_{k + 1}}_R(x_k - \T\nabla F(x_k) - \T A^*\tilde y_{k})  \enspace,\\
	y_{k + 1} = y_k + \Sigma \left(Ax_{k + 1} - b^{\delta}\right) \enspace.
    \end{cases}
\end{align}
The first step is an extrapolation on the dual variable; the second one is the update of the primal variable and involves the proximal-point operator of $R$ and the gradient of $F$;
finally, the third step is the update of the dual variable, which accumulates the residuals of the constraint $Ax=b^{\delta}$. The operators
 $T\colon \X\to\X$ and $\Sigma\colon\Y\to\Y$ are linear, positive and bounded and can be intepreted as  preconditioners, or, if proportional to the identity, as step-sizes.
The proximal-point operator of $R$ is allowed to be computed inexactly with error $\varepsilon_{k + 1} \geq 0$, recovering the exact case for $\varepsilon_{k+1} = 0$.
The notation $\prox^{T,\, \varepsilon_{k+1}}_R$ is intended in terms of $\varepsilon$-subdifferential, namely %
\begin{equation}\label{eq:x_update_is_prox}
\begin{split}
x_{k+1} &= \prox^{T,\, \varepsilon_{k+1}}_R(x_k - \T\nabla F(x_k) - \T A^*\tilde y_{k})    \\
&\Longleftrightarrow
    -T^{-1}\left(x_{k + 1}-x_k\right) - \nabla F(x_k)- A^*\tilde{y}_k \in \partial_{\varepsilon_{k+1}} R(x_{k+1})   \enspace.
\end{split}
\end{equation}
To interpret algorithm~\eqref{eq:algo} as an instance of the approach in \cite{Condat13,Vu13}, it is useful to cast the update of the dual variable $y$  as a proximal step:
\begin{equation}\label{eq:y_update_is_prox}
\begin{split}
 	y_{k + 1} & = \argmin_{y \in \Y} \left\{\langle b^{\delta} - Ax_{k + 1}, y\rangle + \frac{1}{2} \norm{y - y_k}{\Sigma}^2\right\}  \\
	& = \argmin_{y \in \Y} \left\{\langle b^{\delta},y\rangle+\frac{1}{2}\norm{y-\left[y_k+ \Sigma Ax_{k + 1}\right]}{\Sigma}^2\right\} \\
	& = \prox^{\Sigma}_{\langle b^{\delta}, \cdot \rangle} \left(y_k+\Sigma Ax_{k+1}\right) \enspace.
\end{split}
\end{equation}
The above algorithm is cheap in terms of computations per iteration.
Indeed, it only requires one (inexact) evaluation of the proximal operator of the non-smooth function $R$, one evaluation of the gradient of the smooth function $F$ and one matrix-vector multiplication for $A$ and $A^*$.
Its memory cost is also minimal, as only one primal and two dual variables need to be stored.
The $\prox$ of $R$ can be computed exactly for many penalties of interest (see \cite{combettes2011proximal,mosci2010solving}). Through $\varepsilon_{k}$, our framework also handles the case where the optimization problem defined by the proximal operator is numerically computed, in an approximate fashion, through an iterative inner-routine (see \cite{bach2011optimization,salzo2012inexact,barre2020principled}).
\subsection{Assumptions}\label{sub:assumptions}
We first make the following general assumptions on functions and operators involved in the problem.
\begin{assumption}[General hypothesis]\label{ass:gen}
	$\X$ and $\Y$ are Hilbert spaces and $A: \X\to\Y$ is linear and bounded.
	The functions $R$ and $F$ belong to $\Gamma_0(\X)$, meaning that they are proper, convex and lower-semicontinuous.
	Additionally, $F$ is Fr\'echet-differentiable with $L$-Lipschitz continuous gradient on $\X$.
\end{assumption}
In order to introduce the next assumptions on the problem and the existence of an exact solution, %
we first define the set of primal solutions, the set of dual solutions, and the Lagrangian functional with respect to the exact datum $b^\star$:
\begin{align}
	&\Ps :=\argmin_{x \in \X} \left\{ R(x) + F(x): \ \ Ax=\bbs \right\} \enspace, \label{primal}\\
	&\Ds := \argmin_{y \in \Y} \left\{\left[R + F\right]^*(-A^*y) + \langle \bbs, y \rangle\right\}  \enspace,\label{dual}\\
	&\LLs(x, y) := R(x) + F(x) + \langle y, Ax - \bbs \rangle \enspace.
\end{align}
We also denote by $\mathcal{S}^\star$ the set of saddle-points of $\LLs$; namely, $(\bar{x},\bar{y})\in\mathcal{S}^\star$ if and only if $\LLs(\bar{x}, y) - \LLs(x, \bar{y}) \leq 0$ for every $(x, y) \in \X \times \Y$.
We write $\PP^\delta$, $\DD^\delta$, $\LL^\delta$ and $\mathcal{S}^{\delta}$ for their respective counterparts when $\bbs$ is replaced by $b^\delta$.
We refer to the corresponding problems and quantities as the exact and noisy ones, respectively. In the rest of the paper we will assume that one only has access to the noisy quantities: hence our focus  is on the iterative algorithm designed to solve the noisy problem $\PP^\delta$, having in mind that the problem of interest is the exact one $\PP^{\star}$. We make the following assumptions on the existence of solution \emph{to the exact problem}. Notice that, on the other hand, we do not require the existence of solutions (or even feasibility) for the noisy one.
\begin{assumption}[Existence of exact solution]\label{ass:exist}
	There exists a saddle -\\
	point for the Lagrangian $\LLs$ ($\mathcal{S}^\star\neq \emptyset$); namely, a pair $(\xs,\ys)\in\X \times \Y$ such that, for every $(x, y) \in \X \times \Y$,
	$$\LLs(\xs, y) - \LLs(x, \ys) \leq 0 \enspace.$$
\end{assumption}
\begin{remark}\label{remark:cond}
	Under \Cref{ass:gen}, the following statements are equivalent:
	\begin{itemize}
	\item[$\bullet$] $(\xs, \ys)\in\mathcal{S}^{\star}$; namely, it is a saddle-point for the Lagrangian $\LLs$;
		\item[$\bullet$] $\xs$  and $\ys$ satisfy the following optimality conditions:
	\begin{equation}\label{pd_optcond}
		\begin{cases}
			-A^*\ys-\nabla F(\xs)\in \partial R(\xs) \enspace,\\
			A\xs=\bbs \enspace.
		\end{cases}
	\end{equation}
	\end{itemize}
	Moreover, either one of these properties implies that $\xs$ is a primal solution and $\ys$ is a dual solution; namely, $\mathcal{S}^\star\subseteq \Ps \times \Ds$.
	Under usual qualification conditions  \cite[Thm. 26.2]{Bauschke_Combettes11}, the converse is also true: if $\xs \in \Ps$ and $\ys \in \Ds$, then $(\xs, \ys)$ is a saddle-point; namely, $\mathcal{S}^\star = \Ps \times \Ds$.
\end{remark}

As far as the parameters of the algorithm are concerned, we make the following assumptions on the preconditioners $T$ and $\Sigma$.
\begin{assumption}\label{ass:TSigm}
	The operator $T: \ \X \to \X$ is linear, bounded, self-adjoint and positive with spectrum lower and upper bounded by $\tau_m > 0$ and $\tau_M$ respectively.
	The same holds for $\Sigma: \ \Y \to \Y$ with lower and upper bounds $\sigma_m > 0$ and $\sigma_M$.
\end{assumption}
\begin{assumption}\label{ass:omega}
    Define the quantity $\omega := 1 - \tau_M(L + \sigma_M \nor{A}^2)$. The parameters $\tau_M$ and $\sigma_M$ are chosen so that $\omega \geq 0$.
\end{assumption}
\begin{assumption}\label{ass:thetarho}
    For $0<\xi<1$ and $\eta > 1$, let $\theta := \xi - \tau_M(\xi L+ \sigma_M \nor{A}^2)$ and $\rho := \sigma_m(\eta - 1) - \sigma_M \xi \eta$. The parameters $\tau_M, \sigma_m, \sigma_M$ and the constants $\xi, \eta$ are chosen so that $\theta\geq 0$ and $\rho>0$.
\end{assumption}

Notice that \Cref{ass:thetarho} is stronger than \Cref{ass:omega}. We consider them separately because some of our results hold only for \Cref{ass:thetarho}, while for other it is sufficient \Cref{ass:omega}.
Anyway, for every value of $L\geq 0$ and $\norm{A}{}$, it is always possible to choose the algorithm parameters $\tau_M, \sigma_m, \sigma_M, \xi$ and $\eta$ so that \Cref{ass:thetarho} (and so \Cref{ass:omega}) is fulfilled.
Choosing $\xi=1/4$ and $\eta=3/2$, for instance, amounts to require $\sigma_M<(4/3)\sigma_m$ and $\tau_M\leq (L+4\sigma_M\norm{A}{}^2)^{-1}$. For simplicity, the two preconditioners can be taken diagonal or as $T=\tau \Id$ and $\Sigma=\sigma \Id$, where $\Id$ is the identity operator while $\tau$ and $\sigma$ are positive parameters representing the primal and dual stepsizes of the algorithm.
In this case $\tau_m=\tau_M=\tau$, $\sigma_m=\sigma_M=\sigma$ and \Cref{ass:thetarho} naturally simplifies to $\tau\leq \xi(\xi L+ \sigma \nor{A}^2)^{-1}$ for some $0<\xi<1$.
For example, if $F= 0$ and thus $L = 0$, one recovers the classical step-size condition for the algorithm of \cite{Chambolle_Pock11}, that is $\sigma \tau \nor{A}^2 < 1$.

In the framework introduced above, we now show that Algorithm \eqref{eq:algo} is well-suited to iterative regularization, by studying its convergence and stability properties. %
\section{Convergence, stability and early-stopping bounds}\label{sec:inexact}

In this section, we present the main results of the paper. First, we start with a generalization of a well-known result about convergence of primal-dual algorithms. We include it since it
highlights the implicit bias of our algorithm in the case of exact data and exact computations ($b^{\delta} = \bbs$ and $\varepsilon_{k}=0$). Indeed, we prove convergence to a solution of problem $\mathcal{P}^\star$, namely,
amongst all solutions to $Ax = \bbs$, Algorithm \eqref{eq:algo} converges to one with minimal regularizer value.

\begin{restatable}{proposition}{condatconvergence}\label{prop:convergence_cp}
	Assume that  \Cref{ass:gen,ass:exist} hold.
	Let $(x_k, y_k)$ be the sequence generated by iterations \eqref{eq:algo} applied to $b^{\delta} = \bbs$ under \Cref{ass:TSigm,ass:omega}.
	Let also $\varepsilon_{k}=0$ for every $k\in\N$.
	Then $(x_k, y_k)$ weakly converges to a pair in $\mathcal{S}^\star$.
	In particular, $(x_k)$ weakly converges to a point in $\Ps$.
\end{restatable}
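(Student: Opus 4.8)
The plan is to recognize iterations \eqref{eq:algo} (with $b^{\delta}=\bbs$ and $\varepsilon_k=0$) as a preconditioned forward--backward scheme for the monotone inclusion associated with the saddle-point problem, and to establish weak convergence by a Fej\'er-type argument combined with Opial's lemma. First I would record the variational characterizations of one step: from \eqref{eq:x_update_is_prox} with $\varepsilon_{k+1}=0$,
\[
-T^{-1}(x_{k+1}-x_k) - \nabla F(x_k) - A^*\tilde{y}_k \in \partial R(x_{k+1}) \enspace,
\]
and from the dual update, $\Sigma^{-1}(y_{k+1}-y_k) = Ax_{k+1} - \bbs$. I would fix an arbitrary saddle point $(\bar{x},\bar{y})\in\mathcal{S}^\star$, which by \Cref{remark:cond} satisfies the optimality conditions \eqref{pd_optcond}, and use it as the reference point throughout.

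The core of the argument is the construction of a Lyapunov functional. I would test the subgradient inequality for $R$ against $\bar{x}-x_{k+1}$, invoke the convexity and $L$-Lipschitz-gradient property of $F$ (a descent/three-point inequality) to absorb the forward term $\nabla F(x_k)$, and pair the dual relation against $\bar{y}-y_{k+1}$. The extrapolation $\tilde{y}_k = 2y_k - y_{k-1}$ is the crux of the manipulation: rewriting $y_k - y_{k-1} = \Sigma(Ax_k - \bbs)$ lets the cross terms $\langle A(x_{k+1}-\bar{x}),\, y_{k+1}-y_k\rangle$ telescope. Combining everything, I expect to reach a quasi-Fej\'er inequality $\Phi_{k+1} + \Delta_k \leq \Phi_k$, where, up to its precise form,
\[
\Phi_k = \tfrac12\norm{x_k - \bar{x}}{T}^2 + \tfrac12\norm{y_k - \bar{y}}{\Sigma}^2 - \langle A(x_k - \bar{x}),\, y_k - y_{k-1}\rangle
\]
and $\Delta_k \geq 0$ gathers the residual terms in $\norm{x_{k+1}-x_k}{}^2$ and $\norm{y_{k+1}-y_k}{}^2$. \Cref{ass:omega} (the condition $\omega\geq 0$) is exactly what guarantees both that $\Phi_k$ is nonnegative, so that the metric induced by the preconditioners is positive, and that $\Delta_k$ carries the correct sign.

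From the quasi-Fej\'er inequality I would conclude in the standard way. Summing shows that $\Phi_k$ is nonincreasing and bounded below, hence convergent, and that $\sum_k \Delta_k < \infty$; in particular $\norm{x_{k+1}-x_k}{}\to 0$ and $\norm{y_{k+1}-y_k}{}\to 0$, so the constraint residual $Ax_{k+1}-\bbs \to 0$ and the subgradient residual vanishes. Fej\'er monotonicity with respect to $\mathcal{S}^\star$ yields boundedness of $(x_k,y_k)$, hence the existence of weak cluster points. I would then show that any weak cluster point is a saddle point, using the vanishing residuals together with the demiclosedness (weak--strong closedness of the graph) of $\partial R$ in the Hilbert setting, which identifies the limit through the optimality conditions \eqref{pd_optcond}. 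Finally, Opial's lemma upgrades ``Fej\'er monotone plus all weak cluster points in $\mathcal{S}^\star$'' to weak convergence of the whole sequence to a single $(\bar{x},\bar{y})\in\mathcal{S}^\star$. The statement about $(x_k)$ then follows from $\mathcal{S}^\star\subseteq \Ps\times\Ds$ (\Cref{remark:cond}).

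The step I expect to be most delicate is the construction of the Lyapunov functional and the careful bookkeeping of the cross terms produced by the extrapolation: making the telescoping work while keeping $\Phi_k\geq 0$ and $\Delta_k\geq 0$ hinges on using \Cref{ass:omega} sharply. This is precisely the mechanism that the later noisy and inexact results will have to perturb, so I would set up the estimate in a form robust enough to reuse.
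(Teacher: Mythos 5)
Your proposal is correct in outline but takes a genuinely different route from the paper. The paper does not carry out the Fej\'er/Opial analysis itself: it proves \Cref{prop:convergence_cp} by a change of metric, showing that iterations \eqref{eq:algo} with $\varepsilon_k=0$ and $b^\delta=\bbs$ coincide with Algorithm 3.2 of \cite{Condat13} with unit step-sizes, once the primal and dual spaces are equipped with the inner products $\langle T^{-1}\cdot,\cdot\rangle$ and $\langle \Sigma\cdot,\cdot\rangle$ (so that $H=\iota_{\{\bbs\}}$, $K=A$ with adjoint $K^*=TA^*\Sigma$, and $\nabla_T F = T\nabla F$); it then verifies that \Cref{ass:omega} implies Condat's step-size condition $1-\nor{K}^2-L_T/2\geq 0$ and invokes his Theorem 3.1. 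What you propose is essentially to re-prove that theorem from scratch: your one-step estimate, cross-term telescoping and Lyapunov functional are exactly the mechanism behind it (and indeed the paper develops precisely these estimates in \Cref{lem:onestep,lem:cum1}, but only uses them for the ergodic/noisy bounds of \Cref{prop:early_stop}, not for this proposition). The reduction buys brevity and lets the preconditioned, $F\neq 0$ case inherit a nontrivial known result; your direct route buys a self-contained argument whose estimates are reusable downstream, which is in fact how the paper organizes its appendix.

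One point needs more care in your sketch: under \Cref{ass:omega} the inequality $\omega\geq 0$ is not strict, so in the boundary case (e.g.\ $F=0$ and $\tau\sigma\nor{A}^2=1$) the coefficient of $\nor{x_{k+1}-x_k}^2$ in your $\Delta_k$ vanishes, and summability of $\Delta_k$ no longer yields $\nor{x_{k+1}-x_k}\to 0$. The classical direct argument of \cite{Chambolle_Pock11} requires the strict inequality for convergence of the non-averaged iterates; handling the equality case is exactly one of the refinements of \cite{Condat13} that the paper leans on. You should either assume $\omega>0$, or add the extra argument needed to identify cluster points when the primal residual term degenerates.
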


\Cref{prop:convergence_cp} is a first step towards an iterative regularization procedure: it shows that in the absence of noise, iterations \eqref{eq:algo} converge to a solution of interest.
The proof, in \Cref{app:condat_pf}, is a generalization of the results in \cite{Condat13} to our case.
The case with preconditioning, but $F=0$, is treated in \cite{pock2011diagonal}; while the case of $F\neq 0$ but without the preconditioning can be found in \cite{Condat13,Vu13}.

The next step is to show that when only $\bb$ is available, one can approximate the exact solution by early stopping the iterations \eqref{eq:algo} with noisy data.
To this end, we prove stability results in terms of Lagrangian gap and feasibility, that allow to derive a stopping time depending on the noise level $\delta$.
Before stating our main result (\Cref{prop:early_stop}), we first highlight why the Lagrangian gap and the feasibility are adequate quantities to measure convergence of the primal variable. In the next lemma we show that, if they are both zero, the primal variable is a solution of $\mathcal{P}^\star$.

\begin{restatable}{proposition}{feaslagrangian}\label{prop:feas_lagrangian}
	Let $(\xs, \ys) \in \mathcal{S}^\star$  and $(x, y) \in \X \times \Y$ such that
	$\LLs(x, \ys) - \LLs(\xs, y) = 0$ and $A x = \bbs$.
	Then $(x, \ys) \in \mathcal{S}^\star$.
\end{restatable}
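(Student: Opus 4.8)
The plan is to verify directly that $(x, \ys)$ satisfies the two defining saddle-point inequalities. By the definition of $\mathcal{S}^\star$, the pair $(x, \ys)$ belongs to it if and only if $\LLs(x, y') \leq \LLs(x, \ys)$ for all $y' \in \Y$ (so that $\ys$ maximizes $\LLs(x, \cdot)$) and $\LLs(x, \ys) \leq \LLs(x', \ys)$ for all $x' \in \X$ (so that $x$ minimizes $\LLs(\cdot, \ys)$); these two together are equivalent to the single inequality in the definition. I would establish them one at a time, the whole argument being a direct unpacking of the Lagrangian once the two feasibility facts are used to kill the linear terms.

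First I would exploit the feasibility hypothesis $Ax = \bbs$. Substituting it into the Lagrangian makes the coupling term vanish: for every $y' \in \Y$,
\[
\LLs(x, y') = R(x) + F(x) + \langle y', Ax - \bbs \rangle = R(x) + F(x),
\]
so $\LLs(x, \cdot)$ is constant on $\Y$. In particular $\LLs(x, y') = \LLs(x, \ys)$ for all $y'$, which already settles the dual inequality (with equality).

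Next I would unpack the gap condition. Since $(\xs, \ys) \in \mathcal{S}^\star$, the optimality conditions in \Cref{remark:cond} give $A\xs = \bbs$, hence $\LLs(\xs, y) = R(\xs) + F(\xs)$ for every $y$, and in particular $\LLs(\xs, y) = \LLs(\xs, \ys)$. Combined with the computation above, the hypothesis $\LLs(x, \ys) - \LLs(\xs, y) = 0$ reduces to
\[
R(x) + F(x) = R(\xs) + F(\xs), \qquad \text{equivalently} \qquad \LLs(x, \ys) = \LLs(\xs, \ys).
\]

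Finally, the primal inequality follows by transferring the minimality of $\xs$ to $x$. Being a saddle point, $\xs$ minimizes $\LLs(\cdot, \ys)$, so $\LLs(\xs, \ys) \leq \LLs(x', \ys)$ for all $x' \in \X$; using $\LLs(x, \ys) = \LLs(\xs, \ys)$ this gives $\LLs(x, \ys) \leq \LLs(x', \ys)$ for every $x'$. Together with the dual inequality, this shows $(x, \ys) \in \mathcal{S}^\star$. I do not expect a genuine obstacle here: the result is essentially definitional, and the only point requiring care is the systematic use of the two feasibility facts $Ax = \bbs$ and $A\xs = \bbs$ to eliminate the inner-product terms.
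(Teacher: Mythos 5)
Your proof is correct, but it takes a genuinely different route from the paper's. You verify the two saddle-point inequalities for $(x,\ys)$ directly at the level of Lagrangian values: feasibility $Ax=\bbs$ makes $\LLs(x,\cdot)$ constant in the dual variable (settling the sup inequality), feasibility of $\xs$ collapses the gap hypothesis to $R(x)+F(x)=R(\xs)+F(\xs)$, and the minimality of $\xs$ for $\LLs(\cdot,\ys)$ then transfers to $x$ by equality of values. The paper instead rewrites the Lagrangian gap as the Bregman divergence $D_{R+F}^{-A^*\ys}(x,\xs)$ and invokes the general fact that a vanishing Bregman divergence transfers the subgradient: if $-A^*\ys\in\partial(R+F)(\xs)$ and $D_{R+F}^{-A^*\ys}(x,\xs)=0$, then $-A^*\ys\in\partial(R+F)(x)$, after which the optimality conditions of \Cref{remark:cond}, together with $Ax=\bbs$, give the conclusion. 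Your argument is more elementary --- no subdifferential calculus and no appeal to the equivalence with the first-order conditions beyond extracting $A\xs=\bbs$ --- and is arguably cleaner for this isolated statement; what the paper's detour buys is the identity between the Lagrangian gap and the Bregman divergence, which is reused in the surrounding discussion and in \Cref{sec:ell1}. Both proofs are complete.
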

We call the quantity $\LLs(x, \ys) - \LLs(\xs, y)$ \emph{Lagrangian gap}, as it is always non negative since $(\xs, \ys)$ is a saddle point.
More specifically, it is equal to the Bregamn divergence $D^{-A^* \ys}_{R+F}(x, \xs)$, as we detail in the proof (\Cref{app:pf_feaslagrangian}). The latter has been often used as an optimality measure in this context, see e.s. \cite{burger2007error}.
However, we emphasize that, contrarily to the $\alpha$-strongly convex case (where $D_{R+F}^{-A^*y^*}(x, x^\star) \geq \frac{\alpha}{2} \nor{x-x^\star}^2$), a vanishing Lagrangian gap is not enough for the primal variable to be a solution of the primal problem.
For example, for $R(\cdot)= \nor{\cdot}_1$ and $F(\cdot)=0$, the quantity $\LLs(x, \ys) - \LLs(\xs, y)$ vanishes whenever $x$ and $\xs$ have the same support and sign (or simply when $x=0$), while the primal variable $x$ can still be arbitrarily far away from $\xs$ (see \Cref{fig:bregman_void}).
\begin{remark}[Comparison with duality gap] Another quantity that is often considered as optimality measure for primal-dual algorithms is
\[
\sup_{v\in B_2} \LLs(x, v) -\inf_{u\in B_1} \LLs(u, y),
\]
where $B_1\subseteq\X$ and $B_2\subseteq\Y$ are two bounded sets containing a primal-dual solution,  see for example \cite{Chambolle_Pock11}. We note that such a bound can be easily derived from our convergence bounds in the exact setting. As discussed in \cite{Chambolle_Pock11}, the choice $B_1$ and $B_2$ is tricky while our bound is more easily readable in our linearly constrained setting.
\end{remark}

\begin{figure}[t]
    \centering
    \includegraphics[width=0.7\linewidth]{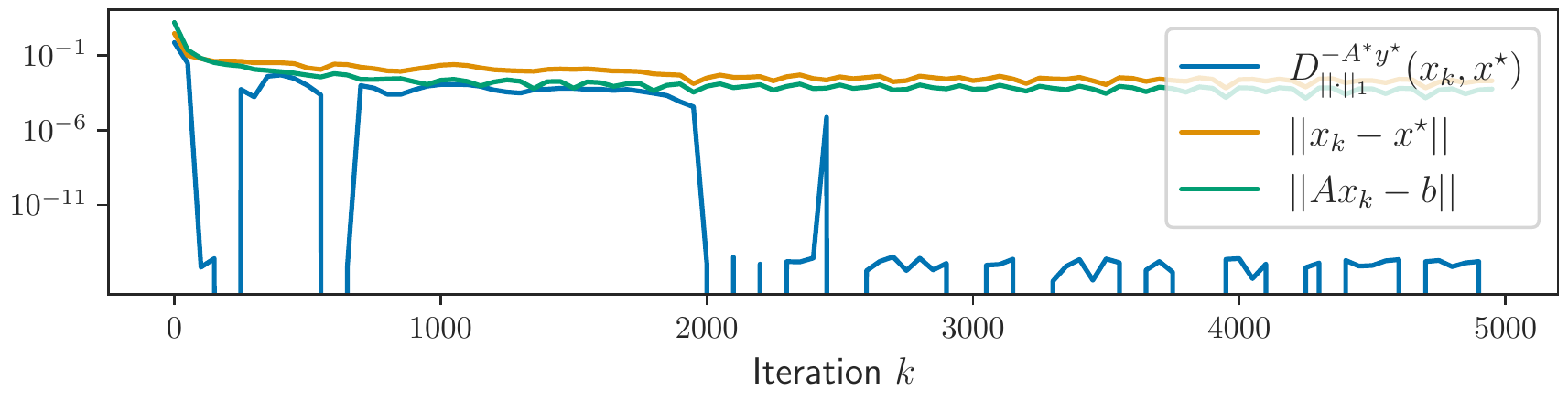}
    \caption{When $R + F$ is not strongly convex, the Bregman divergence alone is not enough to provide useful convergence rates.
    Here for $R = \Vert \cdot \Vert_1$, $F = 0$,  $D^{-A^*\ys}_R (x_k, \xs)$ vanishes quickly, while the iterates $x_k$ of \eqref{eq:algo} are still far from their limit $\xs$ (synthetic noiseless data, exact prox).}
    \label{fig:bregman_void}
\end{figure}

In the next result, we prove a stability bound for the iterates applied to the noisy problem, in terms of the optimality metric discussed above.

\begin{restatable}{theorem}{earlystop}\label{prop:early_stop}
	Let \Cref{ass:gen,ass:exist} hold and $(\xs, \ys) \in \mathcal{S}^\star$ be a saddle-point of the exact problem.
    Let $(x_k, y_k)$ be generated by \eqref{eq:algo} under \Cref{ass:TSigm,ass:omega} with inexact data $\bb$ such that $\nor{\bb - \bbs} \leq \delta$ and order-$\delta$ bounded error in the proximal operator, that is $|\varepsilon_k | \leq C_0 \delta$ for all $k \in \mathbb{N}$.
    Denote by $(\hat{x}_k, \hat{y}_k)$ the averaged iterates  $(\tfrac{1}{k}\sum_{j=1}^{k}x_j, \tfrac{1}{k} \sum_{j=1}^{k} y_j)$.
    Then there exist constants $C_1, C_2$, $C_3$ and $C_4$ such that, for every $k \in \N$,
	\begin{equation}\label{eq:lagrangian_bound}
		\begin{split}
			\LLs(\hat{x}_k,\ys) - \LLs(\xs, \hat{y}_k) \  \leq \ \frac{C_1}{k} +C_2 \delta + C_3 \delta^{3/2} k^{1/2} + C_4 \delta^2  k \enspace.
		\end{split}
	\end{equation}
	Let also \Cref{ass:thetarho} hold.
	Then there exist constants $C_5, C_6$, $C_7$, $C_8$ and $C_9$  such that, for every $k\in\N$,
	\begin{equation}\label{eq:feasibility_bound}
		\begin{split}
			\nor{A \hat{x}_{k} - \bbs}^2 \
			& \leq \frac{C_5}{k} + C_6 \delta+ C_7 \delta^{3/2} k^{1/2} +  C_8 \delta^2 k  + C_9 \delta^2  \enspace.
		\end{split}
	\end{equation}
\end{restatable}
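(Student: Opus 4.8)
The plan is to run the primal--dual (Condat--Vu) energy argument for the \emph{noisy} Lagrangian $\LLd$, and then transfer the resulting estimates to the exact Lagrangian $\LLs$ through the identity $\LLs(x,y)-\LLd(x,y)=\langle y,\bb-\bbs\rangle$, controlling the extra terms via a Gronwall argument on the (unbounded) dual iterates. Concretely, I would first establish a one-step energy inequality. Starting from the characterization \eqref{eq:x_update_is_prox} of the inexact prox, the definition of the $\varepsilon$-subdifferential gives, for every $x$,
$$R(x_{k+1})-R(x)\le \langle T^{-1}(x_k-x_{k+1})-\nabla F(x_k)-A^*\tilde y_k,\,x_{k+1}-x\rangle+\varepsilon_{k+1}.$$
Combining this with convexity of $F$ and the descent lemma (using $L$-Lipschitzness of $\nabla F$), the $\nabla F(x_k)$ terms cancel and I am left with a bound on $R(x_{k+1})+F(x_{k+1})-R(x)-F(x)$. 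I then add the dual coupling terms coming from $\LLd$, rewrite $y_{k+1}-y_k=\Sigma(Ax_{k+1}-\bb)$ and $\tilde y_k=2y_k-y_{k-1}$, and apply the three-point identity for the preconditioned norms $\norm{\cdot}{T}$ and $\norm{\cdot}{\Sigma}$. The extrapolation is designed so that the bilinear terms reorganize into a telescoping part $\langle (y_{k+1}-y_k)-(y_k-y_{k-1}),A(x_{k+1}-x)\rangle$ plus a residual $-\langle y_k-y_{k-1},A(x_{k+1}-x_k)\rangle$, which I control by Young's inequality together with \Cref{ass:TSigm,ass:omega}. Summing over $j=1,\dots,k$ and evaluating at $(x,y)=(\xs,\ys)$ yields a master inequality of the form
$$\sum_{j=1}^{k}\big[\LLd(x_j,\ys)-\LLd(\xs,y_j)\big]+\tfrac12\norm{y_k-\ys}{\Sigma}^2+\tfrac12\norm{x_k-\xs}{T}^2 \le C+\sum_{j=1}^{k}\varepsilon_j,$$
with $C$ collecting the (fixed) initial terms; under the stronger \Cref{ass:thetarho}, the same computation leaves an additional nonnegative term $\rho\sum_{j}\norm{y_j-y_{j-1}}{\Sigma}^2$ on the left-hand side, which will be the source of the feasibility bound.

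Next I would transfer to $\LLs$ and bound the dual iterates. Since $(\xs,\ys)\in\mathcal{S}^\star$, each summand satisfies $\LLd(x_j,\ys)-\LLd(\xs,y_j)=\big[\LLs(x_j,\ys)-\LLs(\xs,y_j)\big]+\langle \ys-y_j,\bb-\bbs\rangle\ge \langle \ys-y_j,\bb-\bbs\rangle$, the bracket being nonnegative by the saddle-point property. Dropping the nonnegative quadratics and using $\abs{\varepsilon_j}\le C_0\delta$ and $\nor{\bb-\bbs}\le\delta$, the master inequality becomes the self-referential estimate
$$\tfrac12\norm{y_k-\ys}{\Sigma}^2\le C+C_0\delta k+\delta\sum_{j=1}^{k}\nor{y_j-\ys}.$$
This is the main obstacle: because the dual variable accumulates the constraint residual $Ax-\bb$ and $\bb\neq\bbs$, the iterates $y_k$ are \emph{not} bounded and drift with the noise, so no a priori bound is available. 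I resolve the recursion with a discrete Gronwall lemma for inequalities of the type $u_k^2\le S_k+\sum_{j\le k}\lambda_j u_j$, which yields $\nor{y_k-\ys}\le \mathcal{O}(1)+\mathcal{O}(\sqrt{\delta k})+\mathcal{O}(\delta k)$; the characteristic $\sqrt{\delta k}$ term is exactly what produces the $\delta^{3/2}k^{1/2}$ contribution below, and averaging gives the same bound for $\nor{\hat y_k-\ys}$.

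For the Lagrangian bound \eqref{eq:lagrangian_bound}, convexity of $\LLd(\cdot,\ys)$ and concavity of $\LLd(\xs,\cdot)$ (Jensen on the averaged iterates) give $\LLd(\hat x_k,\ys)-\LLd(\xs,\hat y_k)\le \tfrac1k\sum_{j=1}^k[\LLd(x_j,\ys)-\LLd(\xs,y_j)]\le \tfrac{C}{k}+C_0\delta$, using the master inequality and $\abs{\varepsilon_j}\le C_0\delta$. The conversion identity then adds $\LLs(\hat x_k,\ys)-\LLs(\xs,\hat y_k)-[\LLd(\hat x_k,\ys)-\LLd(\xs,\hat y_k)]=\langle \ys-\hat y_k,\bb-\bbs\rangle\le \delta\nor{\ys-\hat y_k}$, and inserting the dual bound $\nor{\ys-\hat y_k}\lesssim 1+\sqrt{\delta k}+\delta k$ produces exactly the four terms $\tfrac{C_1}{k}+C_2\delta+C_3\delta^{3/2}k^{1/2}+C_4\delta^2k$.

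Finally, for the feasibility bound \eqref{eq:feasibility_bound} I would write $\nor{A\hat x_k-\bbs}^2\le 2\nor{A\hat x_k-\bb}^2+2\delta^2$, which furnishes the $C_9\delta^2$ term. By Jensen and the dual update $Ax_j-\bb=\Sigma^{-1}(y_j-y_{j-1})$, I bound $\nor{A\hat x_k-\bb}^2\le \tfrac1k\sum_{j=1}^k\nor{Ax_j-\bb}^2\lesssim \tfrac1k\sum_{j=1}^k\norm{y_j-y_{j-1}}{\Sigma}^2$, using \Cref{ass:TSigm} for the norm equivalence. The sum of dual increments is precisely the extra coercive term $\rho\sum_j\norm{y_j-y_{j-1}}{\Sigma}^2$ furnished by \Cref{ass:thetarho}, and repeating the transfer step bounds it by $C+C_0\delta k+\delta\sum_j\nor{y_j-\ys}$; inserting the Gronwall estimate $\sum_j\nor{y_j-\ys}\lesssim k+\delta^{1/2}k^{3/2}+\delta k^2$ and dividing by $k$ gives $\tfrac{C_5}{k}+C_6\delta+C_7\delta^{3/2}k^{1/2}+C_8\delta^2k$, completing the claim. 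The delicate points throughout are the absorption of the coupling residual by the step-size conditions and, above all, the Gronwall control of the noise-driven dual drift, which dictates the $\delta$-power structure of both bounds.
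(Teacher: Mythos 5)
Your proposal follows essentially the same route as the paper's proof: the one-step primal--dual energy estimate with the $\varepsilon$-subdifferential, the cumulative master inequality, the transfer from $\LLd$ to $\LLs$ via $\langle y_j-\ys,\bb-\bbs\rangle$ and the saddle-point property, the discrete Gronwall lemma for $u_j^2\le S_j+\lambda\sum_i u_i$ to control the noise-driven dual drift (the source of the $\delta^{3/2}k^{1/2}$ and $\delta^2 k$ terms), Jensen on the averaged iterates, and the extra coercive term under \Cref{ass:thetarho} for feasibility. The only cosmetic deviations are that you retain $\sum_j\norm{y_j-y_{j-1}}{\Sigma}^2$ where the paper keeps $\sum_j\nor{Ax_j-A\xs}^2$ directly (equivalent since $y_j-y_{j-1}=\Sigma(Ax_j-\bb)$, at the cost of the same $C_9\delta^2$ triangle-inequality term) and a harmless sign slip in the conversion identity that Cauchy--Schwarz absorbs.
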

The proof is given in \Cref{app:pf_earlystop}, where the reader can find also the explicit expression for all the constants involved in the bounds.
Note that the bounds \eqref{eq:lagrangian_bound} and \eqref{eq:feasibility_bound} are composed of two kinds of terms.
The first kind, related to \emph{optimization}, is of the form $\mathcal{O}(1/k)$ and vanishes with the iteration counter, as it is related to the convergence of the algorithm to the exact solution.
The second kind, involving $\delta$, is related to \emph{stability} and is due to the unavailability of $\bbs$.
In particular, when $\delta > 0$, the terms in $k$ make the bound increase with the iteration counter.

The main consequence of \Cref{prop:early_stop} is an early stopping procedure that allows to obtain upper-bounds on both Lagrangian gap and feasibility. %

\begin{corollary} Under the assumptions of \Cref{prop:early_stop},  setting
 $k = \tilde{C} / \delta$ for some constant $\tilde{C} > 0$, there exist constants $C$ and $C'$ such that
\begin{equation*}
	\begin{split}
	& \LLs(\hat{x}_{k},\ys) - \LLs(\xs, \hat{y}_{k}) \  \leq \ C \delta \enspace,\\
	& \nor{A \hat{x}_{k} - \bbs}^2 \ \leq C' \delta + C_6 \delta^2 \enspace.
	\end{split}
\end{equation*}
\end{corollary}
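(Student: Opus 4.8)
The plan is to obtain the corollary by a direct substitution of the early-stopping time $k = \tilde{C}/\delta$ into the two bounds of \Cref{prop:early_stop}, followed by collecting the resulting terms according to their order in $\delta$. The choice $k \sim 1/\delta$ is not arbitrary: in both \eqref{eq:lagrangian_bound} and \eqref{eq:feasibility_bound} the pure optimization term decays like $1/k$ while the worst stability term grows like $\delta^2 k$, so balancing $1/k \asymp \delta^2 k$ forces $k \asymp 1/\delta$. This is exactly the stopping rule that equalizes the two competing effects, and it is the conceptual point worth emphasizing; the rest is arithmetic.

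First I would treat the Lagrangian bound. Substituting $k = \tilde{C}/\delta$ term by term gives
\[
\frac{C_1}{k} = \frac{C_1}{\tilde{C}}\,\delta, \qquad C_3 \delta^{3/2} k^{1/2} = C_3\,\tilde{C}^{1/2}\,\delta, \qquad C_4 \delta^2 k = C_4\,\tilde{C}\,\delta,
\]
while the middle term $C_2 \delta$ is already of order $\delta$. Hence all four summands are $\mathcal{O}(\delta)$, and adding them yields
\[
\LLs(\hat{x}_k, \ys) - \LLs(\xs, \hat{y}_k) \leq \Big( \tfrac{C_1}{\tilde{C}} + C_2 + C_3 \tilde{C}^{1/2} + C_4 \tilde{C} \Big)\, \delta =: C\,\delta,
\]
which is the first claimed inequality.

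Next I would repeat the computation for the feasibility bound. The four terms $C_5/k$, $C_6\delta$, $C_7 \delta^{3/2}k^{1/2}$ and $C_8 \delta^2 k$ all collapse to order $\delta$ exactly as above, whereas the standalone term $C_9 \delta^2$ carries no factor of $k$ and therefore remains of order $\delta^2$. Collecting the first four into a single constant $C' := C_5/\tilde{C} + C_6 + C_7 \tilde{C}^{1/2} + C_8 \tilde{C}$ and keeping the quadratic remainder separate gives
\[
\nor{A\hat{x}_k - \bbs}^2 \leq C'\,\delta + C_9\, \delta^2,
\]
which is the second claimed inequality, the residual $\delta^2$ constant being the one denoted $C_6$ in the statement (a relabeling of $C_9$ from \eqref{eq:feasibility_bound}).

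There is essentially no hard step here, since everything reduces to bookkeeping powers of $\delta$; the only point requiring a little care is tracking how the final constants $C$ and $C'$ depend on the free parameter $\tilde{C}$. I would close by noting that $\tilde{C} > 0$ may be chosen arbitrarily, and that minimizing $C$ (respectively $C'$) over $\tilde{C}$ would yield the sharpest constant, although any fixed positive $\tilde{C}$ already delivers the stated $\mathcal{O}(\delta)$ rate.
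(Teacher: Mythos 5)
Your proposal is correct and is precisely the argument the paper intends: the corollary is stated as an immediate consequence of \Cref{prop:early_stop}, obtained by substituting $k=\tilde C/\delta$ into \eqref{eq:lagrangian_bound} and \eqref{eq:feasibility_bound} and collecting all terms of order $\delta$, with the leftover $C_9\delta^2$ term accounting for the quadratic remainder (which the paper indeed relabels as $C_6\delta^2$ in the corollary statement, as you observed). The only cosmetic caveat is that $k$ should be taken as an integer, e.g. $\lceil\tilde C/\delta\rceil$, which does not affect the order of the bound.
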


This result, combined with \Cref{prop:feas_lagrangian}, shows that the exact solution can be approximated by the averaged iterates generated by algorithm \eqref{eq:algo} on the noisy data, even if the true data is unavailable, by stopping at an appropriate iteration.
Assuming $\delta\leq 1$, the level of approximation between the early-stopped iterate and the exact solution is then proportional to the noise level $\delta$, both for the Lagrangian gap and the feasibility.
We provide further comments and comparisons with existing results  in the next section. We add one remark first.

\begin{remark}[Early stopping in absence of noisy solution]\label{sub:divergence}
We have shown that Algorithm \eqref{eq:algo}, with appropriate early-stopping strategies, provides a good approximation of the exact solution, even if the noiseless datum is unavailable.
Ill-posedness of the problem may be due to instability or non existence of the noisy solution. Our bounds in \Cref{prop:early_stop}  apply to both these situations.
If the problem is ill-posed from the stability point of view, the noiseless and noisy solutions are far apart, and the bounds in \Cref{prop:early_stop}
imply that early-stopping ensures a computationally efficient way to find a stable solution. If the noisy problem does not have a solution, the averaged primal iterates generated by Algorithm \eqref{eq:algo} may diverge (see the example in \Cref{app:divergence}). In this case, early-stopping is thus not only efficient to get a solution stable to noise, but indeed necessary to prevent unbounded behaviours. %
In this situation, it is thus mandatory to perform early-stopping, and this confirms that it is unavoidable to have a stability bound going to $+\infty$ with the number of iterations. For more results related to the unfeasible case,  see also \Cref{sec:unfeas}.
\end{remark}

\section{Comparison with existing results}
\label{sec:related_works}

The idea of exploiting the implicit regularization properties of optimization algorithms has been studied, often under the name of iterative regularization, in the fields of inverse problems \cite{engl1996regularization}, image restoration \cite{burger2007error}, and more recently machine learning \cite{yao2007early}.
Existing methods can be divided into two classes, depending on whether or not strong convexity of the regularizer is assumed.
In the following we compare known results with ours.

\subsection{Strongly convex regularizer}
\label{sub:strongly_convex}
We begin noting that, to the best of our knowledge, our method is the only one to handle the smooth term $F$ in the regularizer
using only its gradient. We next provide an overview of the algorithms proposed for iterative regularization.
\begin{itemize}
    \item
    \paragraphfont{ Gradient descent, stochastic or accelerated.}
    The study of implicit regularization properties of gradient descent, known in the inverse problem community as Landweber method, goes back to the 50's \cite[Chap. 6]{engl1996regularization}.
    Accelerated versions of gradient descent, first proposed by Nesterov in \cite{nesterov1983method}, have been also studied in inverse problems  \cite{neubauer2017nesterov}. Approaches related to the heavy-ball method \cite{polyak1964some} have also been considered in inverse problems under the name of $\nu$-method, see \cite{engl1996regularization}.
    Generalizations towards $p$ norms with $p>1$ have been considered \cite{schopfer2006nonlinear,brianzi2013preconditioned}, while more general choices are not as studied. Interestingly, there is a rich literature in the non-convex setting for nonlinear inverse problems \cite{kaltenbacher2008iterative}.
    These ideas have been extended to machine learning considering regularizing properties of gradient descent \cite{yao2007early}, and its stochastic and accelerated versions  \cite{moulines2011non,rosasco2015learning,paglia}.
    \item
    \paragraphfont{ Linearized Bregman iterations and mirror descent.}
    Interest in regularizers beyond the Euclidean norm, in particular non strongly convex ones, has been mainly motivated by imaging applications and Total Variation regularization.
    Following the pioneering work of \cite{OshBurGol05,lorenz2014linearized}, a series of methods have been designed for iterative regularization with general convex regularizers (see \cite{burger2007error} and references therein).
    If $R$ is $\alpha$-strongly convex, the iterative algorithm to exploit is mirror descent \cite{Nemirovski_Yudin83,BecTeb03}, which has been popularized in the inverse/imaging problems community under the name of ``Linearized Bregman iterations'' \cite{yin2010analysis,YinOshBur08}:
    \begin{equation}\label{eq:lin_breg_it}
        \begin{cases}
        x_{k+1} = \argmin_{x \in \X} D_{R - \frac{\alpha}{2} \Vert \cdot \Vert^2}^{p_k}(x, x_k) + \langle x, A^* (A x_k - b) \rangle + \frac{1}{2\alpha} \norm{x - x_k}{}^2 \enspace ,\\
        p_{k+1} = p_k - \frac{1}{\alpha}(x_{k+1} - x_k) - A^*(Ax_k - b)  \enspace .
        \end{cases}
    \end{equation}
    It has been shown that this algorithm, in combination with a discrepancy type stopping rule, regularizes ill-posed problems \cite{cai2009convergence}.
    \item
    \paragraphfont{Accelerated dual gradient descent.} From a different perspective, the stability and regularization properties of the accelerated variant of Linearized Bregman iterations have been studied in \cite{matet2017don}.
    In the latter, mirror descent is interpreted as gradient descent applied to the dual; this connection, without acceleration, can also be found in  \cite{yin2010analysis}.
    \item
    \paragraphfont{ Diagonal approaches.}
    All the aforementioned techniques are tailored to the use of a quadratic datafitting term.
    They cannot be applied when the nature of the noise differs, calling for another loss.
    In that case, diagonal approaches offer an alternative, applying an optimization algorithm to successive approximations of the original problem \cite{BahLem94}.
    Convergence rates and stability of diagonal approaches for inverse problems have been considered in \cite{garrigos2018iterative} and in \cite{calatroni2019accelerated} for the accelerated case.
\end{itemize}

\subsection{Non strongly convex regularizers}
If the regularizer is only convex, as we consider, Linearized Bregman iterations cannot be applied and one must resort to one of the following.

\begin{itemize}
    \item
    \paragraphfont{Bregman iteration and ADMM.}
    The main algorithm in this case is ADMM \cite{Boyd_Parikh_Chu_Peleato_Eckstein11}, which has been studied in the imaging community under the name of Bregman iterations.
    Starting from $x_0 = 0$ and $p_0 = 0$, its updates read
    \begin{equation}\label{eq:breg_it}
    \begin{cases}
        x_{k+1} \in \argmin_{x \in \X} D^{p_k}_R(x, x_k) + \frac{1}{2} \norm{Ax - b}{}^2 \enspace, \\
        p_{k+1} = p_k - A^*(Ax_{k+1} - b)  \enspace.
    \end{cases}
    \end{equation}
    The algorithm converges to the solution of \eqref{eq:exact}; its regularization properties can be found in \cite{burger2007error}.
    It has been extended to nonlinear inverse problems in \cite{bachmayr2009iterative}.
    However, this method is impractical, since the minimization step in $x$ cannot be performed exactly.
    \item
    \paragraphfont{Bregmanized Operator Splitting and linearized/preconditioned ADMM.}\\
    These variants of Bregman iterations and ADMM rely on preconditioning to avoid the resolution of a difficult optimization problem at each iteration.
    They have been used empirically as regularizing procedures in inverse and imaging problems \cite{ZhaBurOsh11,ZhaBurBre10}.
    While convergence results are known, we are not aware of any theoretical quantitative stability result.
    \item
    \paragraphfont{Specific algorithms for sparse recovery and compressed sensing.}
        In the specific context of sparse recovery, \cite{osher2016sparse} and \cite{Vaskevicius_Kanade_Rebeschini19} have devised specific optimization procedures.
        These do not generalize to regularizers beyond $\ell_1$, nor do they allow to handle $F$.
    \item
    \paragraphfont{Exact regularization ($F=0$).}
    Exact regularization \cite{friedlander2008exact,yin2010analysis,schopfer2012exact} refers to solving
    \begin{equation}
        \min_x \ R(x) + \frac{\alpha}{2} \norm{x}{}^2  \quad \text{s.t.} \quad Ax = b \enspace,
    \end{equation}
    and to showing that there exists a value of $\alpha$ such that this new problem and \eqref{eq:our_pb} have the same minimizer.
    Then, known iterative regularization algorithms for the strongly convex case can be applied.
    The main drawback is that the existence of such a value of $\alpha$ is not guaranteed in general, it is problem specific, and cannot be determined in advance; hence it becomes a value to be tuned, which in turn is costly. When the regularizer is given by the $\ell^1$-norm, this approach is also related to the one of sparse Kaczmarz method proposed in \cite{schopfer2019linear}.
\end{itemize}
\ \\
As clear from the above discussion, to the best of our knowledge, \emph{there previously did not exist an implementable iterative regularization procedure able to handle any non strongly convex regularizer}.
Our proposed method fills this gap, and can be applied to the many instances of non smooth non strongly convex regularizers.

\section{The special case of sparse recovery with $\ell_1$-norm}
\label{sec:ell1}

In this section, we strengthen the results of \Cref{sec:inexact} in the case of sparse recovery. The choice $R(\cdot) = \nor{\cdot}_1$ has had a tremendous impact on sparse model estimation \cite{foucart2013}.
Below, we specialize our results to this case, obtaining bounds not only in terms of Lagrangian gap and feasibility, but directly on the distance between the iterates and the true model. The main result of the section, \Cref{thm:model_recovery} is a corollary of our results and a lemma in \cite{grasmair2011necessary} which allows to control the distance between a point and a solution in terms of the feasibility and the langrangian gap.
Therefore, in the next subsection, we first recall some results in \cite{grasmair2011necessary}, while the new result is in \Cref{ell1itreg}.
\subsection{Sparse recovery and compressed sensing}
We set $\X = \ell^2(\N; \R)$, $R(\cdot) = \nor{\cdot}_1$ and $F(\cdot) = 0$.
The support of $x \in \X$ is $\supp(x):= \{i \in \N: \ x_i\neq 0\}$ and $\abs{\cdot}$ denotes the cardinality of a set.
The Bregman divergence induced by $\nor{\cdot}_1$ is simply denoted by $D$. We first recall some notions from \cite{grasmair2011necessary}.

\begin{restatable}{proposition}{extendedsupp}\label{prop:extended_supp}
Fix a primal-dual solution $\left(\xs, \ys \right) \in \mathcal{S}^\star$.
Let the extended support be $\Gamma  := \{i \in \N: \abs{\left(A^*\ys\right)_i} = 1 \}$
and the saturation gap be $m  := \sup \left\{\abs{\left(A^*\ys\right)_i}:  \abs{\left(A^*\ys\right)_i} < 1 \right\}$.
Then $\Gamma$ is finite, and $m < 1$.
Moreover, for every $x\in\X$, with $\Gamma_C := \N \setminus \Gamma$,
\begin{equation}\label{eq:bregman_ext_supp}
	\begin{split}
		D^{-A^*\ys}(x,\xs)
		&\geq (1-m) \sum_{i\in\Gamma_C}^{} \abs{x_i}.
	\end{split}
\end{equation}
\end{restatable}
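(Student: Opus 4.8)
The plan is to read off everything from the first-order optimality condition at the primal-dual solution. By \Cref{remark:cond} with $F = 0$, the pair $(\xs,\ys)$ satisfies $-A^*\ys \in \partial R(\xs) = \partial \nor{\xs}_1$. Translating the $\ell^1$ subdifferential coordinatewise, this means $\abs{(A^*\ys)_i} \le 1$ for every $i$, with equality whenever $\xs_i \ne 0$; equivalently, writing $\theta := -A^*\ys$, the inclusion is just the pair of conditions $\nor{\theta}_\infty \le 1$ together with the Fenchel equality $\langle \theta, \xs \rangle = \nor{\xs}_1$. Since $A^*$ is bounded, $\theta \in \X = \ell^2(\N;\R)$, so its coordinates are square-summable. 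This square-summability, together with the pointwise bound $\abs{(A^*\ys)_i} \le 1$, is the only analytic input the argument will need.

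First I would establish the two structural claims. For finiteness of $\Gamma$: if infinitely many coordinates satisfied $\abs{(A^*\ys)_i} = 1$, then $\sum_i \abs{(A^*\ys)_i}^2 = +\infty$, contradicting $A^*\ys \in \ell^2$; hence $\Gamma$ is finite. For $m < 1$: because $A^*\ys \in \ell^2$ its coordinates tend to $0$, so only finitely many indices have $\abs{(A^*\ys)_i} > 1/2$. Consequently the supremum defining $m$ is either $\le 1/2$, or else attained as a maximum over the finitely many indices whose coordinate exceeds $1/2$; in both cases it is a maximum over finitely many values each strictly below $1$, so $m < 1$. I would also record here that, since $\abs{(A^*\ys)_i} \le 1$ everywhere, the set $\Gamma_C$ coincides exactly with the indices where $\abs{(A^*\ys)_i} < 1$, whence $\abs{\theta_i} \le m$ for all $i \in \Gamma_C$.

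Finally I would compute the Bregman divergence explicitly. Expanding $D^{-A^*\ys}(x,\xs) = \nor{x}_1 - \nor{\xs}_1 - \langle \theta, x - \xs \rangle$ and substituting the Fenchel equality $\langle \theta, \xs \rangle = \nor{\xs}_1$, the two $\nor{\xs}_1$ contributions cancel and one is left with the clean identity $D^{-A^*\ys}(x,\xs) = \nor{x}_1 - \langle \theta, x \rangle = \sum_i \big( \abs{x_i} - \theta_i x_i \big)$, a sum of nonnegative terms because $\abs{\theta_i} \le 1$. Discarding the terms indexed by $\Gamma$ (each nonnegative) and bounding each remaining term via $\abs{x_i} - \theta_i x_i \ge (1 - \abs{\theta_i}) \abs{x_i} \ge (1 - m) \abs{x_i}$ for $i \in \Gamma_C$ yields the claimed inequality.

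I do not anticipate a genuinely hard step: the argument is essentially a coordinatewise reading of the subdifferential inclusion. The only points requiring care are the passage to the infinite-dimensional setting and the nature of $m$ as a supremum. When $\nor{x}_1 = +\infty$ the left-hand side is $+\infty$ and the bound is trivial, so one may assume $x \in \ell^1$; then, since $\theta \in \ell^2$, $x, \xs \in \ell^2$ and $\xs \in \ell^1$, every series above converges absolutely and the rearrangements and term-by-term bounds are legitimate. The verification that $m$ is a genuine maximum rather than an unattained supremum is precisely where square-summability, equivalently decay of the coordinates to zero, is used, and it is this same decay that forces $m < 1$.
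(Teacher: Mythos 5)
Your proposal is correct and follows essentially the same route as the paper's proof: the coordinatewise reading of $-A^*\ys\in\partial\nor{\xs}_1$, square-summability of $A^*\ys$ to get finiteness of $\Gamma$ and $m<1$ via decay of the coordinates, cancellation of the $\xs$-terms through the Fenchel equality, and the split of the resulting nonnegative sum over $\Gamma$ and $\Gamma_C$. The only difference is that you spell out the convergence/rearrangement issues in $\ell^2(\N;\R)$ and the trivial case $\nor{x}_1=+\infty$, which the paper leaves implicit.
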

\ \\
For completeness, the proof is reported in \Cref{app:sub:sparse_proof}.
As  $D^{-A^*\ys}(\xs, \xs)=0$ and $m<1$, the (finite) set $\Gamma$ can be considered as an \emph{extended support}, as \Cref{eq:bregman_ext_supp} shows that $\xs$ is zero on the indices of $\Gamma_C$.

More generally, if for some $x\in\X$ we have $D^{-A^*\ys}(x,\xs)=0$, then $x=0$ (and so $x$ coincides with $\xs$) on $\Gamma_C$.
On the other hand, as mentioned above, $D^{-A^*\ys}(x,\xs)=0$ does not ensure any similarity between the two vectors on $\Gamma$, the finite subset of indices where the components of $\xs$ may be non-zero (see \Cref{fig:bregman_void}).

\ \\
To obtain sparse recovery results, we rely on compressed sensing assumptions on the design operator $A$ and on the exact primal solution $\xs$.
Based on \Cref{ass:CS}, \Cref{lem:CS} will allow us to bound $\nor{x - \xs}$ by a combination of the feasibility and the Lagrangian gap.
\begin{assumption}[Compressed sensing]\label{ass:CS}
    \ For some $s\in\mathbb{N}$,
    \begin{itemize}
    	\item[1. ] there exists a $s$-sparse solution $\xs$ to \cref{linsyst}; namely, $A\xs=\bbs$ with $ |\supp(\xs) | \leq s$;%
    	\item[2. ] there exist constants $\theta_s, \theta_{s,s}$ and $\theta_{s,2s}$ such that
    	\begin{itemize}
    	    \item[a) ]  for every $x\in \X$ with $\abs{\supp(x)}\leq s$,
            \begin{equation*}
            	\left(1 - \theta_s\right)\norm{x}{}^2\leq\norm{Ax}{}^2\leq\left(1+\theta_s\right)\norm{x}{}^2;
            \end{equation*}
            \item[b) ] for every $x,x'\in \X$ with $\abs{\supp(x)}\leq s$, $\abs{\supp(x')}\leq s$ (resp. $\abs{\supp(x')} \leq 2s)$ and $\supp(x)\cap \supp(x')=\emptyset$,
            \begin{equation*}
            		\abs{\langle Ax,Ax'\rangle } \leq \theta_{s, s}\nor{x} \nor{x'}.
            \end{equation*}
            (resp. $\abs{\langle Ax, Ax'\rangle } \leq \theta_{s, 2s}\nor{x} \nor{x'}$).
            \item[c) ] $\theta_s+\theta_{s,s}+\theta_{s,2s} < 1$.
    	\end{itemize}
    \end{itemize}
\end{assumption}
\begin{lemma}[\cite{grasmair2011necessary}, Prop. 5.3]\label{lem:CS}
	Suppose \Cref{ass:CS} holds.
	Then:
	\begin{itemize}
		\item The vector $\xs$ is the unique primal solution of Problem \eqref{eq:exact} with $R(\cdot)=\norm{\cdot}{1}$; namely,
		\begin{equation}
		\argmin_{x\in\X} \left\{\norm{x}{1}: \ \ Ax=\bbs \right\} = \left\{\xs\right\} \enspace .
		\end{equation}

		\item There exists a dual solution $\ys\in \Y$ such that
		\begin{equation*}
			\norm{\ys}{}\leq W_s := \frac{\sqrt{s}}{\sqrt{1-\theta_s}} \frac{\theta_{s,s}}{1-\theta_s-\theta_{s,2s}} \quad \text{and} \quad m\leq M_s: = \frac{\theta_{s,s}}{1-\theta_s-\theta_{s,2s}}<1,	\end{equation*}
		where $m$ is the saturation gap (\Cref{prop:extended_supp})  related to $\ys$.
		\item Let $\X_{\Ss}:=span\left\{e_i: \quad i\in \supp(\xs) \right\}$ and by $i_{\Ss}: \ \X_{\Ss}\to \X$ the identity embedding. Then $A_{\Ss}:=A \circ i_{\Ss}$ is injective with
	\begin{equation}
        \label{QQ} \norm{A_{\Ss}^{-1}}{}\leq Q_s:= \frac{1}{\sqrt{1-\theta_s}}.
	\end{equation}
		\item For every $x\in \X$,
		\begin{equation}\label{ineq_CS}
		\begin{split}
			\norm{x-\xs}{} & \leq Q_s\norm{Ax-\bbs}{} + \frac{1+Q_s \norm{A}{}}{1-M_s} D^{-A^*\ys}\left(x,\xs\right).
		\end{split}
	\end{equation}
	\end{itemize}
\end{lemma}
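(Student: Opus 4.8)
\emph{Plan.} I would organize the four claims around a single object, a dual certificate $\ys$ for $\xs$, and dispatch the structural facts first. For the injectivity of $A_\Ss$ and \eqref{QQ}: any $v\in\X_\Ss$ is supported on $\supp(\xs)$, which has at most $s$ elements, so part~2a) of \Cref{ass:CS} gives $\nor{Av}^2\ge(1-\theta_s)\nor{v}^2$; hence $A_\Ss$ is bounded below, therefore injective, with $\nor{A_\Ss^{-1}}\le(1-\theta_s)^{-1/2}=Q_s$. This is the only ingredient needed besides the certificate.

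\emph{The certificate (main obstacle).} By \Cref{remark:cond} with $F=0$, a dual solution is a vector $\ys$ satisfying $-A^*\ys\in\partial\nor{\xs}_1$, i.e.\ $(A^*\ys)_i=-\operatorname{sign}(\xs_i)$ for $i\in S:=\supp(\xs)$ and $\abs{(A^*\ys)_i}\le1$ otherwise. Setting $\sigma:=\operatorname{sign}(\xs)$ on $S$, such a certificate exists in $\operatorname{range}(A_\Ss)$ (for instance $\ys=-A_\Ss(A_\Ss^*A_\Ss)^{-1}\sigma$ realizes the support trace exactly, using the invertibility just established). The delicate point --- and where I expect the real work to lie --- is to exhibit a certificate whose norm and off-support sup-norm are controlled by the \emph{sharp} constants $W_s$ and $M_s$: bounding $\nor{\ys}$ requires the lower RIP bound on $A_\Ss^*A_\Ss$, while bounding $\abs{(A^*\ys)_i}$ for $i\notin S$ requires the restricted-orthogonality constants $\theta_{s,s},\theta_{s,2s}$, and the denominator $1-\theta_s-\theta_{s,2s}$ appearing in both $W_s$ and $M_s$ signals that a refined (rather than one-line) estimate of the cross terms is needed. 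This is precisely the content of \cite[Prop.~5.3]{grasmair2011necessary}, whose construction I would reproduce; I note only that $M_s<1$ is exactly condition~2c) of \Cref{ass:CS}, and that because all off-support entries are then strictly below $1$, the extended support of \Cref{prop:extended_supp} satisfies $\Gamma=S$.

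\emph{Stability estimate and uniqueness.} Granting the certificate, \eqref{ineq_CS} follows cleanly, and this is the part I would prove in full. Write $h:=x-\xs=h_S+h_{S^c}$. Since $h_S\in\X_\Ss$, \eqref{QQ} gives $\nor{h_S}\le Q_s\nor{Ah_S}$; using $h_S=h-h_{S^c}$ and $Ah=Ax-\bbs$, this yields $\nor{h_S}\le Q_s\nor{Ax-\bbs}+Q_s\nor{A}\nor{h_{S^c}}$. For the complementary part, $\Gamma=S$ lets me apply \eqref{eq:bregman_ext_supp}: $D^{-A^*\ys}(x,\xs)\ge(1-m)\nor{h_{S^c}}_1\ge(1-m)\nor{h_{S^c}}$, so $\nor{h_{S^c}}\le(1-m)^{-1}D^{-A^*\ys}(x,\xs)\le(1-M_s)^{-1}D^{-A^*\ys}(x,\xs)$ since $m\le M_s$. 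Adding the two bounds through $\nor{h}\le\nor{h_S}+\nor{h_{S^c}}$ produces exactly \eqref{ineq_CS}. Uniqueness then comes for free: if $x'$ is feasible and $\ell_1$-optimal, then $D^{-A^*\ys}(x',\xs)=\nor{x'}_1-\nor{\xs}_1+\langle\ys,A(x'-\xs)\rangle\le0$ because $A(x'-\xs)=0$ and $\nor{x'}_1\le\nor{\xs}_1$; hence $D^{-A^*\ys}(x',\xs)=0$, and feeding $\nor{Ax'-\bbs}=0$ together with this into \eqref{ineq_CS} forces $x'=\xs$, so $\Ps=\{\xs\}$.
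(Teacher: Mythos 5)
This lemma is imported verbatim from \cite{grasmair2011necessary} (Prop.~5.3): the paper gives no proof of it, so there is no internal argument to compare against. Judged on its own terms, your proposal correctly and completely handles three of the four bullets \emph{given} the second one: the lower bound $\nor{Av}^2\geq(1-\theta_s)\nor{v}^2$ on $s$-sparse vectors does give injectivity of $A_{\Ss}$ and \eqref{QQ}; the decomposition $h=h_S+h_{S^c}$ with $\nor{h_S}\leq Q_s\nor{Ah_S}\leq Q_s\nor{Ax-\bbs}+Q_s\nor{A}\nor{h_{S^c}}$ combined with $\nor{h_{S^c}}\leq\nor{h_{S^c}}_1\leq(1-M_s)^{-1}D^{-A^*\ys}(x,\xs)$ reproduces \eqref{ineq_CS} exactly; and the uniqueness argument via $D^{-A^*\ys}(x',\xs)=\nor{x'}_1-\nor{\xs}_1\leq 0$ is clean (note that $\xs\in\Ps$ itself follows from the certificate via \Cref{remark:cond}, which closes the loop).

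The genuine gap is the one you flag yourself: the construction of the certificate with the \emph{stated} constants $W_s$ and $M_s$, which you defer back to \cite{grasmair2011necessary}. This is the heart of the result, and your candidate $\ys=-A_{\Ss}(A_{\Ss}^*A_{\Ss})^{-1}\sigma$ does not obviously deliver it: the naive estimate for that certificate gives $\nor{\ys}\leq\sqrt{s(1+\theta_s)}/(1-\theta_s)$ and gives no control at all on $\abs{(A^*\ys)_i}$ for $i\notin S$ without invoking the restricted-orthogonality constants; the denominators $1-\theta_s-\theta_{s,2s}$ come from a Neumann-series (Cand\`es--Tao-type) construction that you correctly anticipate but do not carry out. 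Two further small points: (i) as literally stated, $m\leq M_s$ bounds only the non-saturated entries, so $\Gamma=S$ requires the stronger fact, which the construction does provide, that $\abs{(A^*\ys)_i}\leq M_s<1$ for \emph{every} $i\notin S$ --- you use this correctly but should say it is what the cited construction yields, not a consequence of the displayed inequality; (ii) since the paper itself only cites Prop.~5.3, your treatment is, if anything, more detailed than the source text, but a self-contained proof would still need the certificate estimates written out.
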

\ \\
The previous results applied to Tykhonov regularization with $\ell^1$ norm (Lasso), allow to derive explicit regularization results, which we  recall in \Cref{app:sub:tycho_sparse}.
In an a similar fashion, we use these facts for our proposed iterative regularization method instead.

\subsection{Sparse recovery with iterative regularization}\label{ell1itreg}
Combining  \Cref{prop:early_stop}  with \Cref{ass:CS} and the inequality in \eqref{ineq_CS}, we get the following theorem for sparse recovery with $\ell_1$-norm.
In this setting, we are able to get an upper bound for the distance between the iterates and the exact solution.
\begin{theorem}\label{thm:model_recovery}
	Suppose that \Cref{ass:CS} holds.
	Let $\xs \in \X$ be the unique primal solution of the exact problem
	$$\min_{x\in\X} \left\{\norm{x}{1}: \ \ Ax=\bbs \right\} \enspace,$$
	and $\ys\in\Y$ the dual solution given by \Cref{lem:CS}.
	Moreover, under \Cref{ass:TSigm,ass:omega,ass:thetarho}, let $\left(\hat{x}_k,\hat{y}_k\right)$ be the sequence of averaged iterates generated by the primal-dual algorithm \ref{eq:algo} when applied to the inexact problem
	\begin{equation*}
		\min_{x \in \X} \left\{ \nor{x}_1: \ \ Ax = \bb \right\} \enspace.
	\end{equation*}
	Then we have that, for every $k \in \mathbb{N}$,
	\begin{align}\label{eq:ell1}
			\norm{\hat{x}_k-\xs}{}
			& \leq Q_s \norm{A\hat{x}_k-\bbs}{} + \frac{1+Q_s\norm{A}{}}{1-M_s} D^{-A^*\ys}\left(\hat{x}_k,\xs\right) \nonumber \\
			& \leq Q_s \sqrt{ \frac{C_4}{k} + C_5 \delta + C_6 \delta^2 +C_7 \delta^2 k }
			    + \frac{1+Q_s\norm{A}{}}{1-M_s} \left[  \frac{C_1}{k} +C_2 \delta + C_3 \delta^2  k  \right]
	\end{align}
\end{theorem}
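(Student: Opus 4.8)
The plan is to derive the statement as a direct corollary, combining the deterministic recovery inequality \eqref{ineq_CS} of \Cref{lem:CS} with the stability estimates \eqref{eq:lagrangian_bound} and \eqref{eq:feasibility_bound} of \Cref{prop:early_stop}. The first inequality in \eqref{eq:ell1} is nothing but \eqref{ineq_CS} evaluated at the averaged primal iterate $x=\hat{x}_k$: since \Cref{ass:CS} holds, \Cref{lem:CS} guarantees that $\xs$ is the unique primal solution and furnishes a dual solution $\ys$ whose saturation gap is bounded by $M_s<1$, together with the constant $Q_s$. Hence the whole task reduces to controlling the two quantities on the right-hand side, namely the feasibility residual $\norm{A\hat{x}_k-\bbs}{}$ and the Bregman divergence $D^{-A^*\ys}(\hat{x}_k,\xs)$, by means of \Cref{prop:early_stop}.

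The key identification is that, in the present setting where $F=0$ and $A\xs=\bbs$, the Bregman divergence coincides with the Lagrangian gap: as recorded just after \Cref{prop:feas_lagrangian}, one has $D^{-A^*\ys}(\hat{x}_k,\xs)=\LLs(\hat{x}_k,\ys)-\LLs(\xs,\hat{y}_k)$. I would therefore substitute the Lagrangian bound \eqref{eq:lagrangian_bound} in for the Bregman term, and the square root of the feasibility bound \eqref{eq:feasibility_bound} in for $\norm{A\hat{x}_k-\bbs}{}$. This immediately produces a bound of the required shape, except for the mixed term $\delta^{3/2}k^{1/2}$ appearing in both \eqref{eq:lagrangian_bound} and \eqref{eq:feasibility_bound}, which is absent from the target \eqref{eq:ell1}.

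The one genuinely non-cosmetic step is to absorb these mixed terms. Writing $\delta^{3/2}k^{1/2}=\sqrt{\delta}\,\sqrt{\delta^2 k}$ and applying Young's inequality $ab\leq\tfrac12(a^2+b^2)$ yields $\delta^{3/2}k^{1/2}\leq\tfrac12\delta+\tfrac12\delta^2 k$, so each such term is dominated by a combination of a $\delta$ term and a $\delta^2 k$ term already present. After this substitution the Lagrangian bound takes the form $\tfrac{C_1}{k}+C_2\delta+C_3\delta^2 k$ and the feasibility bound, before taking the square root, the form $\tfrac{C_4}{k}+C_5\delta+C_6\delta^2+C_7\delta^2 k$, the new constants being obtained by merging the old ones. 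Plugging both into \eqref{ineq_CS} gives exactly \eqref{eq:ell1}. I would be careful to flag that \Cref{ass:thetarho}, and not merely \Cref{ass:omega}, is invoked here, since the feasibility estimate \eqref{eq:feasibility_bound} is only available under that stronger hypothesis.

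I do not anticipate a substantial obstacle: the theorem is essentially a bookkeeping combination of two already-established estimates through the recovery inequality of \cite{grasmair2011necessary}. The only points deserving care are verifying that the dual variable entering the Bregman divergence is precisely the $\ys$ supplied by \Cref{lem:CS} (so that the bound $M_s<1$ legitimately controls the prefactor $\tfrac{1+Q_s\norm{A}{}}{1-M_s}$), and the consistent relabelling of the constants $C_1,\dots,C_7$ after the Young-inequality absorption.
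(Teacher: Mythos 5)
Your proposal is correct and is exactly the argument the paper intends: the theorem is stated as a direct corollary of \Cref{lem:CS} (inequality \eqref{ineq_CS} at $x=\hat{x}_k$), the identification of the Bregman divergence with the Lagrangian gap from the proof of \Cref{prop:feas_lagrangian}, and the two bounds of \Cref{prop:early_stop}. Your explicit Young-inequality absorption $\delta^{3/2}k^{1/2}\leq\tfrac12\delta+\tfrac12\delta^2 k$ is a welcome addition, since the paper silently relabels the constants without explaining how the mixed term disappears.
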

\begin{remark}[Dependence on initialization]
Notice that the bound \eqref{eq:ell1} depends on the initialization $z_0$, through $V(\zs - z_0)$ in the $C_i$'s, see \Cref{app:pf_earlystop}.
Yet, using the initialization $z_0 = 0$, we can bound the term $V(\zs - z_0)$ by  quantities that do not involve the unknown solution $\zs$:
	\begin{equation}\label{eq:aux}
		\begin{split}
			V(z_0-\zs) & = \frac{1}{2\tau} \norm{\xs}{}^2 +\frac{1}{2\sigma} \norm{\ys}{}^2 \\
		\eqref{QQ} \ 	& \leq \frac{\norm{A_{\Ss}\xs}{}^2 }{2\tau Q_s^2} +\frac{W_s^2}{2\sigma}\\
			& = \frac{\norm{\bbs}{}^2}{2\tau Q_s^2} +\frac{W_s^2}{2\sigma}\\
			& \leq \frac{2\norm{\bbs-\bb}{}^2+2\norm{\bb}{}^2}{2\tau Q_s^2}  +\frac{W_s^2}{2\sigma}\\
			& \leq \frac{\delta^2+\norm{\bb}{}^2}{\tau Q_s^2}  +\frac{W_s^2}{2\sigma}.
		\end{split}
	\end{equation}
	For comparison with Tykhonov explicit regularization, we recall a result from \cite{grasmair2011necessary} (see \Cref{cor:Tyk} in the Appendix for the precise statement). Under \Cref{ass:CS} and for $\alpha>0$, let
	\begin{equation}\label{TykProbl}
		x_\alpha \in \argmin_{x\in\X} \left\{\nor{Ax - b^\delta}^2 + \alpha \nor{x}_1 \right\}.
	\end{equation}
Then, defining $D:=\left(1+Q_s\norm{A}{}\right)/\left(1-M_s\right)$, %
\begin{align*}
 & \norm{x_{\alpha}-\xs}{}
     \leq \left(Q_s W_s+D W_s^2/4\right)\alpha + \left(Q_s+D W_s\right) \delta + D \frac{\delta^2}{\alpha}.
\end{align*}
In particular, in the case of Tykhonov regularization, the upper bound does not depend on the magnitude of the exact or noisy data. %
	On the other hand, from \eqref{eq:ell1} and \eqref{eq:aux} we do not get a bound independent from the magnitude of $\bb$.  However, in general, the exact solution of Tykhonov problem is not available in closed form and must be approximated numerically by some iterative algorithm; the main examples are forward-backward (also called ISTA in this context) or accelerated forward-backward (FISTA). When these methods are applied to Tykhonov problem, the distance between the iterate and the solution depends indeed on the initialization and on the magnitude of the data, as for the proposed primal-dual algorithm.
\end{remark}

\section{Unfeasible case: convergence and stability with respect to a normal solution}
\label{sec:unfeas}
In this section, we consider the case where the ideal problem is not feasible, i.e. the linear equation $Ax=b^\star$ does not have a solution.
We show that to provide convergence and stability results for \Cref{eq:algo}  it is enough to assume that the normal equation $A^*Ax=A^*b^\star$  has a solution.
Indeed, this is the classical setting in ill-posed inverse problems \cite{engl1996regularization},  but rarely considered in the context of iterative regularization beyond Hilbertian norms. This generalization is especially relevant for infinite dimensional problems.

In the first part of this section we focus on convergence and we refer to a generic data $b \in \Y$ on which the algorithm is run, as the presented results can be applied both to the exact and the inexact data.
We denote the set of primal solutions with data $b$ simply as $\PP$, the one of dual solutions as $\DD$, the Lagrangian as $\LL$ and the set of saddle-points as $\mathcal{S}$.
Let $(x_k,y_k)$ be the sequence generated by the primal-dual algorithm in \Cref{eq:algo} with data $b$.
First we have the following result, showing that every weak cluster point of the averaged iterates is a saddle-point. Therefore, if there are no saddle-points, the iterates must diverge.
\begin{restatable}{corollary}{unfeas}\label{lem:unfeas}
        Let \Cref{ass:gen} hold.
        Let $(x_k,y_k)$ be the sequence generated by \Cref{eq:algo} with data $b$ under \Cref{ass:TSigm}, \Cref{ass:omega} and summable error ($(\varepsilon_k)\in\ell^1$).
        Denote by $(\hat{x}_k,\hat{y}_k)$ the averaged iterates.
        Then, every weak cluster point of $(\hat{x}_k,\hat{y}_k)$ belongs to $\mathcal{S}$. %
         In particular, if $\Ss=\emptyset$,  then the primal-dual sequence $(\hat{x}_k,\hat{y}_k)$ diverges: $\norm{(\hat{x}_k,\hat{y}_k)}{}\to +\infty$.
\end{restatable}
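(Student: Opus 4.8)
The plan is to deduce the cluster-point characterization from the ergodic estimate that already underlies \Cref{prop:convergence_cp,prop:early_stop}, and then to obtain the divergence statement from weak sequential compactness.

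First I would isolate, from the convergence analysis in \Cref{app:condat_pf,app:pf_earlystop} run directly with the data $b$ (rather than with $\bbs$), the fundamental \emph{ergodic inequality}: there is a nonnegative functional $V_0(\cdot,\cdot)$, depending only on the initialization, such that for every $(x,y)\in\X\times\Y$ and every $k\in\N$,
$$
\LL(\hat{x}_k, y) - \LL(x, \hat{y}_k) \ \leq \ \frac{1}{k}\Big(V_0(x,y) + \sum_{j=1}^{\infty}\varepsilon_j\,c_j\Big),
$$
where the $c_j$ are bounded and the error series converges because $(\varepsilon_k)\in\ell^1$. Since $V_0(x,y)$ is finite for each fixed pair and the bracketed quantity is finite, the right-hand side tends to $0$ as $k\to\infty$; crucially, here there is no $\delta$-term, because the Lagrangian $\LL$ and the algorithm share the same datum $b$.

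Second, I would fix a weak cluster point $(\bar{x},\bar{y})$ of $(\hat{x}_k,\hat{y}_k)$ and a subsequence with $(\hat{x}_{k_j},\hat{y}_{k_j})\rightharpoonup(\bar{x},\bar{y})$, and exploit the two regularity properties of $\LL$. For fixed $y$, the map $x\mapsto \LL(x,y)=R(x)+F(x)+\langle y, Ax-b\rangle$ is convex and lower-semicontinuous (\Cref{ass:gen}), hence weakly lower-semicontinuous, so $\LL(\bar{x},y)\leq\liminf_j \LL(\hat{x}_{k_j},y)$; for fixed $x$, the map $y\mapsto \LL(x,y)$ is affine and continuous, hence weakly continuous, so $\LL(x,\hat{y}_{k_j})\to\LL(x,\bar{y})$. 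Combining these with the ergodic inequality along the subsequence gives, for every $x\in\operatorname{dom}(R+F)$ and every $y\in\Y$,
$$
\LL(\bar{x},y) \ \leq \ \liminf_j \LL(\hat{x}_{k_j},y) \ \leq \ \liminf_j\Big(\LL(x,\hat{y}_{k_j})+\tfrac{1}{k_j}\big(V_0(x,y)+\textstyle\sum_j\varepsilon_j c_j\big)\Big) \ = \ \LL(x,\bar{y}).
$$
The inequality is trivial when $x\notin\operatorname{dom}(R+F)$, where $\LL(x,\bar{y})=+\infty$; note also that choosing a finite such $x$ forces $\LL(\bar{x},y)<+\infty$, so $\bar{x}$ indeed lies in the domain. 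This is exactly the saddle-point inequality $\LL(\bar{x},y)-\LL(x,\bar{y})\leq0$ for all $(x,y)$, i.e. $(\bar{x},\bar{y})\in\Ss$. For the ``in particular'' claim, suppose $\Ss=\emptyset$ but $\norm{(\hat{x}_k,\hat{y}_k)}{}\not\to+\infty$; then $\liminf_k\norm{(\hat{x}_k,\hat{y}_k)}{}<+\infty$, so a subsequence is bounded, and by weak sequential compactness of bounded sets in the Hilbert space $\X\times\Y$ a further subsequence converges weakly, producing a weak cluster point which by the first part lies in $\Ss$ — contradicting $\Ss=\emptyset$. Hence $\norm{(\hat{x}_k,\hat{y}_k)}{}\to+\infty$.

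The main obstacle I expect is the first step: extracting a clean ergodic inequality valid for \emph{arbitrary} test points $(x,y)$ (and not merely at a saddle point, whose existence is precisely what we cannot assume here), while correctly absorbing the inexact-prox error through $\ell^1$ summability so that the right-hand side genuinely vanishes. Once this estimate is secured, the limiting argument via weak lower-semicontinuity and the compactness contradiction are routine.
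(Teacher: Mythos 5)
Your proposal is correct and follows essentially the same route as the paper: the ergodic inequality you seek is exactly \Cref{lem:cum1} (stated for arbitrary test points $(x,y)$, with the nonnegative terms dropped thanks to \Cref{ass:omega}) combined with Jensen's inequality, after which the paper also passes to a weakly convergent subsequence, uses weak lower-semicontinuity of $R+F$ and weak continuity of the affine part to get the saddle-point inequality, and concludes divergence by the same compactness contradiction. Your extra care about the domain of $R+F$ and the affine-continuity in $y$ only makes explicit what the paper leaves implicit.
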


The proof can be found in \Cref{app:unfeas_1}. The above result ensures that every weak cluster point of the averaged sequence belongs to $\mathcal{S}$ (and so to $\PP\times\DD$ by \Cref{remark:cond}).
Moreover, if there are no primal solutions ($\PP=\emptyset$), then $\PP\times\DD=\emptyset$, $\mathcal{S}=\emptyset$ and so the joint sequence $(\hat{x}_k,\hat{y}_k)$ diverges.
Yet we are mainly interested in the primal variable, which may still converge while  $\norm{(\hat{x}_k,\hat{y}_k)}{}\to+\infty$.
In the sequel, we show sufficient conditions for the averaged primal iterates to converge even when $\PP = \emptyset$. For this purpose, we introduce the feasible set and the \emph{normal} feasible set as
\begin{equation}
    \CC:=\left\{x\in\X: \ \ Ax=b\right\} \quad \text{and} \quad \tilde{\CC}:=\left\{x\in\X: \ \ A^*Ax=A^*b\right\} \enspace.
\end{equation}
It is clear that $\CC\subseteq\tilde{\CC}$.
Moreover, $\CC \neq \emptyset$ implies that $\tilde{\CC}=\CC$.
Indeed, let $u\in\tilde{\CC}$ and pick any $x\in\CC$.
Then $A^*Au=A^*b$, $Ax=b$ and $u-x\in N(A^*A)=N(A)$.
Thus $Au=Ax=b$; and so $u\in\CC$.
\\In addition to the normal feasible set, define also the \emph{normal} primal problem, its dual, and the \emph{normal} Lagrangian as
\begin{align}\label{normprob}
	\tilde{\PP} &:= \argmin_{x \in \X } \left\{ R(x)+F(x): \ \ A^*A x = A^* b \right\}, \\
	\tilde{\DD} &:= \argmin_{v \in \X } \left\{\left[R+F\right]^*(-A^*A v) + \langle A^*b, v \rangle \right\},\\
    \tilde{\LL}(x, v) &:= R(x) + F(x) + \langle v, A^*Ax - A^*b \rangle.
\end{align}
From $\CC\neq \emptyset \implies \tilde{\CC}=\CC$, we have $\CC \neq \emptyset \implies \tilde{\PP}=\PP.$
But it may happen that $\CC=\emptyset$ and $\tilde{\CC}\neq \emptyset$; and, consequently,  that there are no primal solutions ($\PP = \emptyset$) but there are normal primal solutions ($\tilde{\PP}\neq \emptyset$).
Thus in the next results, considering the case $\PP = \emptyset$ and $\tilde{\PP}\neq \emptyset$, we show convergence and stability with respect to a normal solution. More precisely,
\begin{itemize}
	\item in \Cref{th:unfeas_norm}, we show sufficient conditions to get convergence of the averaged primal sequence to a point in $\tilde{\PP}$ even though $\PP=\emptyset$;
	\item in \cref{th:unfeas_stab}, we get stability and early-stopping results analogous to the ones in \Cref{prop:early_stop} but with respect to any normal solution.
\end{itemize}

For simplicity, in the remainder of this section we include neither the preconditioning nor the error in the proximal-operator, setting $T=\tau \Id$, $\Sigma = \sigma \Id$ and $\varepsilon_k=0$ for every $k\in\N$.
Then our algorithm can be written as: given $x_0, y_{-1}$ and setting $y_0=y_{-1}+\sigma(Ax_0-b)$, for every $k\in\mathbb{N}$,
\begin{align}\label{algo:easy}
    \begin{cases}
	\tilde{y}_{k} = 2 y_k - y_{k - 1} \enspace, \\
	x_{k+1} = \prox_{\tau R}(x_k - \tau \nabla F(x_k) - \tau A^*\tilde y_{k}) \enspace,\\
	y_{k + 1} = y_k + \sigma \left(Ax_{k + 1} - b\right) \enspace.
    \end{cases}
\end{align}
Assume that $\tilde{\CC}\neq \emptyset$.
Let $x^b\in\tilde{\mathcal{C}}$ (meaning that $A^*Ax^b=A^*b$) and let $S:=(A^*A)^{\frac{1}{2}}$.
The normal problem \eqref{normprob} then can be rewritten as:
\begin{equation}\label{leastsquares}
    \tilde{\PP}=\argmin_{x\in\X} \left\{ R(x)+F(x): \ \ Sx=Sx^b \right\} \enspace.
\end{equation}
Indeed, $N(S)=N(S^*S)=N(A^*A)$ and $A^*Ax=A^*b=A^*Ax^b \Leftrightarrow x-x^b\in N(A^*A) = N(S)$.

In \Cref{lem:unfeas_sameit}, we show that, under mild conditions, the primal variable generated by the algorithm, when applied to problem $\PP$, is an instance of the same procedure but applied to the normal problem $\tilde{\mathcal{P}}$ in the form \eqref{leastsquares}.

\begin{restatable}{lemma}{unfeassameit}\label{lem:unfeas_sameit}
	Let \Cref{ass:gen} hold.
	Assume that $\tilde{\mathcal{C}}\neq \emptyset$. %
	Let $\seq{x_k}$ be the primal sequence generated by algorithm \eqref{algo:easy}; namely, with $T=\tau \Id$, $\Sigma = \sigma \Id$, $\varepsilon_k=0$ for every $k\in\N$ and $y_0=y_{-1}+\sigma(Ax_0-b)$. %
	Then, there exists a primal sequence $\seq{u_k}$ generated by the same procedure but applied to problem $\tilde{\PP}$ (as stated in \eqref{leastsquares}) such that $x_k=u_k$ for every $k\in\N$.
\end{restatable}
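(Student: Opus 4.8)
The plan is to run both instances of algorithm \eqref{algo:easy} in parallel and to prove by induction that their primal sequences coincide, exploiting that the primal step sees the dual variable only through the adjoint of the constraint operator. Denote by $(x_k,y_k)$ the sequence produced by \eqref{algo:easy} on the original problem (constraint operator $A$, datum $b$, with $y_k\in\Y$) and by $(u_k,w_k)$ the one produced by the same recursion on the normal problem \eqref{leastsquares} (constraint operator $S=(A^*A)^{1/2}$, datum $Sx^b$, with $w_k\in\X$). Since $S$ is self-adjoint, the normal primal update reads $u_{k+1}=\prox_{\tau R}(u_k-\tau\nabla F(u_k)-\tau S\tilde w_k)$, and the only discrepancy between the two recursions enters through the terms $A^*\tilde y_k$ and $S\tilde w_k$ feeding the prox. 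Hence it suffices to identify these two quantities along the iterations; note also that $\nabla F$ is the same in both, since only the linear constraint, not $F$, changes in passing from $\PP$ to $\tilde\PP$.

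First I would establish the invariant $A^*y_k=Sw_k$ for every $k\geq -1$. Unrolling the dual recursions gives $A^*y_k=A^*y_{-1}+\sigma\sum_{j=0}^{k}(A^*Ax_j-A^*b)$ and, using $S^2=A^*A$, $Sw_k=Sw_{-1}+\sigma\sum_{j=0}^{k}(A^*Au_j-A^*Ax^b)$. The defining property $A^*Ax^b=A^*b$ of $x^b\in\tilde\CC$ makes the two accumulated residuals agree termwise as soon as $x_j=u_j$, so the invariant propagates from its base case $A^*y_{-1}=Sw_{-1}$. Given the invariant at steps $k$ and $k-1$, linearity of the extrapolation $\tilde y_k=2y_k-y_{k-1}$ yields $A^*\tilde y_k=2A^*y_k-A^*y_{k-1}=2Sw_k-Sw_{k-1}=S\tilde w_k$, so the two prox arguments coincide and $x_{k+1}=u_{k+1}$. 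Feeding this back closes a joint induction on the pair of statements $x_k=u_k$ and $A^*y_k=Sw_k$, with base case $x_0=u_0$.

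The main obstacle is precisely the base case $A^*y_{-1}=Sw_{-1}$: I must initialize the normal recursion so that $A^*y_{-1}$ lies \emph{exactly} in the range of $S$. In finite dimensions this is immediate, since $\operatorname{range}(A^*)=\operatorname{range}(A^*A)=\operatorname{range}(S)$, but in the infinite-dimensional setting targeted here only the closures of these ranges coincide, so a soft range argument would give approximate rather than exact equality. The clean fix is the polar decomposition $A=US$ of the bounded operator $A$, with $U$ a partial isometry; then $A^*=S^*U^*=SU^*$, so setting $w_{-1}:=U^*y_{-1}$ gives $Sw_{-1}=SU^*y_{-1}=A^*y_{-1}$ exactly. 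Choosing $u_0:=x_0$ together with this $w_{-1}$ (and the prescribed $w_0=w_{-1}+\sigma(Su_0-Sx^b)$) furnishes an admissible initialization of \eqref{algo:easy} on $\tilde\PP$, and the induction above then delivers $x_k=u_k$ for all $k\in\N$, as claimed.
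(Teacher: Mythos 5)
Your proof is correct and follows essentially the same route as the paper's: both arguments hinge on applying $A^*$ to the dual update, using $A^*Ax^b=A^*b$ to rewrite the residual, and matching the initializations through the identity $R(A^*)=R(S)$ --- the paper via the change of variables $p_k=A^*y_k$, $q_k=Sv_k$, you via an induction on the invariant $A^*y_k=Sw_k$, which is the same computation. One minor correction: the ranges $R(A^*)$ and $R(S)$ coincide \emph{exactly} even in infinite dimensions (this is \cite[Prop.\ 2.18]{engl1996regularization}, and is precisely what your polar-decomposition identity $A^*=SU^*$ establishes), so your concern that only their closures agree is unfounded, though the fix you give is of course valid.
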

The proof can be found in \Cref{app:unfeas_2}. We are now ready to state the two main results of this section. The first one shows weak convergence of the averaged primal iterate of the algorithm, when applied to $\mathcal{P}$, to a solution of the normal problem $\tilde{\mathcal{P}}$.
\begin{restatable}{theorem}{unfeasstocazzo}\label{th:unfeas_norm}
	Let \Cref{ass:gen} hold.
	Assume that $\tilde{\PP}$ (as stated in \ref{normprob}) admits a saddle-point; namely, that there exists a pair $(\tilde{x},\tilde{v})\in\X \times \X$ such that
\begin{equation}\label{ciao}
    \begin{cases}
    -A^*A\tilde{v}\in\partial R(\tilde{x})+\nabla F(\tilde{x}) \enspace, \\
    A^*A\tilde{x}=A^*b \enspace.
    \end{cases}
\end{equation}
    Let $(x_k,y_k)$ be the sequence generated by \Cref{algo:easy}, namely with initialization  $y_0=y_{-1}+\sigma(Ax_0-b)$, and under \Cref{ass:omega}. %
    Denote by $(\hat{x}_k)$ the averaged primal iterates.
    Then there exists $\tilde{x}_{\infty}\in\tilde{\PP}$ such that $\hat{x}_k\rightharpoonup \tilde{x}_{\infty}$.
    Moreover, if $\PP = \emptyset$, then $\hat{y}_k$ diverges.
\end{restatable}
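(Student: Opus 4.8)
The plan is to reduce the unfeasible problem $\PP$ to the normal problem $\tilde{\PP}$ via \Cref{lem:unfeas_sameit}, and then apply the convergence machinery already developed for feasible problems to the normal problem, which by hypothesis \emph{does} admit a saddle-point.

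First I would invoke \Cref{lem:unfeas_sameit}: since $\tilde{\CC}\neq\emptyset$ (this follows from the existence of the pair $(\tilde{x},\tilde{v})$, as $\tilde{x}$ satisfies $A^*A\tilde{x}=A^*b$), the primal sequence $(x_k)$ generated by \Cref{algo:easy} applied to $\PP$ coincides, for every $k$, with a primal sequence $(u_k)$ generated by the \emph{same} primal-dual procedure applied to the normal problem in the form \eqref{leastsquares}, i.e. with the linear operator $S=(A^*A)^{1/2}$ in place of $A$ and the right-hand side $Sx^b$ in place of $b$. Consequently $\hat{x}_k = \hat{u}_k$ for all $k$, so it suffices to prove weak convergence of $(\hat{u}_k)$ to a point of $\tilde{\PP}$.

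Next I would check that the normal problem \eqref{leastsquares} is an \emph{exact, feasible} linearly-constrained problem of the type covered by the earlier convergence results. The operator $S$ is linear and bounded, $R,F\in\Gamma_0(\X)$ with $F$ smooth, so \Cref{ass:gen} holds for the normal problem. The saddle-point hypothesis \eqref{ciao} is exactly the optimality condition \eqref{pd_optcond} for the normal Lagrangian $\tilde{\LL}$: rewriting $-A^*A\tilde{v}=-S^*(S\tilde{v})$ and setting $\tilde{y}:=S\tilde{v}$, the pair $(\tilde{x},\tilde{y})$ is a saddle-point of the Lagrangian associated to the constraint $Sx=Sx^b$, so \Cref{ass:exist} holds for the normal problem. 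I must also verify that the step-size condition transfers: the relevant operator norm is now $\nor{S}^2=\nor{A^*A}=\nor{A}^2$, so \Cref{ass:omega} for the original data (which involves $\nor{A}^2$) is precisely \Cref{ass:omega} for the normal problem. With these in place, \Cref{prop:convergence_cp} — or more precisely its averaged-iterate version underlying \Cref{prop:early_stop} with $\delta=0$, $\varepsilon_k=0$ — yields that $(\hat{u}_k)$ weakly converges to a primal solution of \eqref{leastsquares}, i.e. $\hat{x}_k=\hat{u}_k \rightharpoonup \tilde{x}_\infty\in\tilde{\PP}$.

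Finally, for the divergence of $(\hat{y}_k)$ when $\PP=\emptyset$, I would apply \Cref{lem:unfeas} to the original data $b$: it guarantees that every weak cluster point of the joint averaged sequence $(\hat{x}_k,\hat{y}_k)$ lies in $\mathcal{S}$, and $\PP=\emptyset$ forces $\mathcal{S}=\emptyset$, so $\nor{(\hat{x}_k,\hat{y}_k)}\to+\infty$. Since we have just shown $(\hat{x}_k)$ is bounded (it converges weakly, hence is bounded), the blow-up must come entirely from the dual component, giving $\nor{\hat{y}_k}\to+\infty$.

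The main obstacle I anticipate is the bookkeeping in the reduction step: one must confirm that the \emph{dual} iterates of the normal procedure are genuinely related to the original $(y_k)$ in a way compatible with the initialization $y_0=y_{-1}+\sigma(Ax_0-b)$, and that the averaging commutes with the identification $x_k=u_k$ (which is immediate once the primal iterates coincide termwise). Verifying that \eqref{ciao} translates into a bona fide saddle-point for $\tilde{\LL}$ under the factorization $S=(A^*A)^{1/2}$ — in particular that the dual variable $\tilde{y}=S\tilde{v}$ lives in the correct space — is the delicate point, but it is essentially the algebraic identity $N(S)=N(A^*A)$ already recorded before the statement.
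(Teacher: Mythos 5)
Your proposal is correct and follows essentially the same route as the paper's proof: reduction to the normal problem via \Cref{lem:unfeas_sameit}, verification that \Cref{ass:gen,ass:exist,ass:omega} transfer to the constraint $Sx=Sx^b$ (using $\bar v = S\tilde v$ and $\nor{S}=\nor{A}$), application of \Cref{prop:convergence_cp}, and then \Cref{lem:unfeas} plus boundedness of $(\hat x_k)$ to force divergence of $(\hat y_k)$. The only cosmetic difference is that the paper invokes \Cref{prop:convergence_cp} directly (weak convergence of the iterates implies weak convergence of their Ces\`aro averages), whereas you also mention the averaged-iterate estimates; both work.
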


The proof can be found in \Cref{app:unfeas_norm}.
Since we assume that the normal problem has a saddle point, a priori we could apply the primal-dual algorithm directly to the normal problem $\tilde{\mathcal{P}}$ and therefore with $A^*A$ in place of $A$. To fix the ideas, consider the final dimensional setting, in which $A\in\mathbb{R}^{n\times d}$. If $d>n$, as is usual in compressed sensing, working with the matrix $A^*A$ can be disadvantageous.
\ \\
Two questions remain open from our previous analysis, that we leave as future work. Consider for simplicity the case $F=0$. From the definition of the primal iterates in the proposed algorithm and the properties of the $prox$ operator, we know that, if the domain of $R$ is bounded, then the primal iterates remain bounded. Suppose that the normal equation has solutions, namely $\tilde{\mathcal{C}}\neq \emptyset$. If the domain of $R$ does not intersect  $\tilde{\mathcal{C}}$, we expect - but we could not prove - that the primal iterates of the algorithm converge to an element in
    $$\argmin_{x\in dom(F)} \ \inf_{y\in\tilde{\mathcal{C}}} \norm{x-y}{}.$$
    On the other hand, now suppose - for instance - that the function $R$ has full domain. We have seen that if the normal problem admits a saddle-point, then the averaged primal sequence converges to an element in $\tilde{\mathcal{P}}$ (see \Cref{th:unfeas_norm}). We expect that, on the contrary, the absence of solution for the primal normal problem (for instance, if $\tilde{\mathcal{C}}=\emptyset$) implies divergence of the primal iterates. This is the case of the example discussed in \Cref{sub:divergence}, but we could not prove it in general.\\
\ \\
To conclude this section, we show a stability result for the iterates generated by the algorithm on the noisy data with respect to any saddle-point of the exact \emph{normal} problem. For this theorem we come back to the separated notation $b^{\star}$ for the exact data and $b^{\delta}$ for the noisy one, while we keep the symbol tilde for normal problems and solutions; for instance, $\tilde{\PP}^{\star}$ will denote the exact normal primal problem, as stated for instance in \Cref{normprob} but with data $b^{\star}$.
\begin{restatable}{theorem}{unfeasstab}\label{th:unfeas_stab}
	Let \Cref{ass:gen} hold
	and suppose that there exists a pair $(\tilde{x},\tilde{v})\in\X \times \X$ such that
\begin{equation}\label{optcond}
    \begin{cases}
    -A^*A\tilde{v}\in\partial R(\tilde{x})+\nabla F(\tilde{x}) \enspace, \\
    A^*A\tilde{x}=A^*b^{\star} \enspace
    \end{cases}
\end{equation}
(namely, a saddle-point for the normal exact problem $\tilde{\PP}^{\star}$). Let $b^{\delta}\in\Y$ be a noisy data such that $\norm{b^{\delta}-b^{\star}}{}\leq \delta$ for some $\delta\geq 0$. Moreover, suppose that $\tilde{\mathcal{C}}^\delta\neq \emptyset$; namely, that there exists $x^{\delta}\in\X$ such that $A^*Ax^{\delta}=A^*b^{\delta}$. Let \Cref{ass:omega} and \Cref{ass:thetarho} hold and $(x_k,y_k)$ be the sequence generated by the algorithm \Cref{algo:easy} on the noisy data $b^{\delta}$; namely, for the initialization $y_0=y_{-1}+\sigma(Ax_0-b^{\delta})$,
    \begin{align*}
    \begin{cases}
	\tilde{y}_{k} = 2 y_k - y_{k - 1} \enspace, \\
	x_{k+1} = \prox^{}_{\tau R}(x_k - \tau\nabla F(x_k) - \tau A^*\tilde y_{k})  \enspace,\\
	y_{k + 1} = y_k + \sigma \left(Ax_{k + 1} - b^{\delta}\right).
    \end{cases}
\end{align*}
    Denote by $(\hat{x}_k)$ the averaged primal iterates. Then,
    \begin{equation*}
    D^{-A^*A\tilde{v}}(\hat{x}_k,\tilde{x}) \leq \frac{C_1}{k} +C_2 \delta + C_4 \delta^2  k
\end{equation*}
and
\begin{equation*}
    \norm{A^*A\hat{x}_k-A^*b^{\star}}{}^2\leq \norm{S}{}\left[\frac{C_5}{k} + C_6 \delta+  C_8 \delta^2 k  + C_9 \delta^2 \right] ,
\end{equation*}
where the constants involved in the bounds are specified in the proof.
\end{restatable}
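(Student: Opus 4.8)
The plan is to reduce the proof of \Cref{th:unfeas_stab} to a direct application of \Cref{lem:unfeas_sameit} together with the stability bounds already established in \Cref{prop:early_stop}. The key observation is that \Cref{lem:unfeas_sameit} tells us that the primal sequence $(x_k)$ generated by \eqref{algo:easy} on the data $b^{\delta}$ coincides, iterate by iterate, with the primal sequence produced by the same primal-dual scheme applied to the \emph{normal} problem written in the form \eqref{leastsquares}, i.e. with the operator $S = (A^*A)^{1/2}$ replacing $A$ and with constraint $S x = S x^{\delta}$. Since the hypotheses guarantee both a saddle-point $(\tilde x, \tilde v)$ for the exact normal problem and feasibility of the noisy normal problem ($\tilde{\CC}^{\delta} \neq \emptyset$), I would first recast the situation entirely in terms of the operator $S$: the exact constraint is $S\tilde x = S x^{\star}$ for some $x^{\star}$ with $A^*A x^{\star} = A^* b^{\star}$, and the noisy constraint is $Sx = S x^{\delta}$.

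The crucial step is controlling the effective noise level in the $S$-formulation. In the original problem the noise is measured as $\norm{b^{\delta}-b^{\star}}{}\leq \delta$, but after passing to the normal equations the relevant discrepancy is $\norm{S x^{\delta} - S x^{\star}}{}$, equivalently $\norm{A^* b^{\delta} - A^* b^{\star}}{}$ up to the identification $S^2 = A^*A$. First I would note that $\norm{A^*(b^{\delta}-b^{\star})}{} \leq \norm{A}{}\,\norm{b^{\delta}-b^{\star}}{} \leq \norm{A}{}\,\delta$, so the noise in the transformed problem is still of order $\delta$, with the constant absorbing a factor of $\norm{A}{}$ (more precisely $\norm{S}{}=\norm{A}{}$). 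This is what allows the bounds of \Cref{prop:early_stop} to transfer with the same $\delta$-dependence; the constants $C_1,\dots,C_9$ simply pick up factors depending on $\norm{A}{}$ and on the saddle-point $(\tilde x,\tilde v)$.

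With this identification in hand, I would apply \Cref{prop:early_stop} to the normal problem under the operator $S$. The Lagrangian-gap bound \eqref{eq:lagrangian_bound} specializes to the normal Lagrangian $\tilde{\LL}^{\star}$, and since the Lagrangian gap there equals the Bregman divergence $D^{-A^*A\tilde v}_{R+F}(\hat x_k,\tilde x)$ (by the identity noted after \Cref{prop:feas_lagrangian}, now with $-A^* A \tilde v$ in place of $-A^*\ys$), this directly yields the first claimed inequality
\[
D^{-A^*A\tilde v}(\hat x_k,\tilde x) \leq \frac{C_1}{k} + C_2\delta + C_4\delta^2 k .
\]
For the second inequality I would apply the feasibility bound \eqref{eq:feasibility_bound} of \Cref{prop:early_stop}, which in the $S$-formulation controls $\norm{S\hat x_k - S x^{\star}}{}^2 = \norm{A^*A\hat x_k - A^* b^{\star}}{}^2$ after rewriting $S x^\star$ via $S^2 x^\star = A^* b^{\star}$; the appearance of the prefactor $\norm{S}{}$ in the stated bound comes precisely from bounding $\norm{S(\cdot)}{}$ by $\norm{S}{}\norm{\cdot}{}$ when translating between the $S$-feasibility of \eqref{eq:feasibility_bound} and the quantity $\norm{A^*A\hat x_k - A^* b^{\star}}{}$.

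The main obstacle I anticipate is bookkeeping rather than conceptual: one must verify carefully that \Cref{ass:TSigm,ass:omega,ass:thetarho} remain satisfied when $A$ is replaced by $S$, since the relevant operator norm changes from $\norm{A}{}$ to $\norm{S}{}=\norm{A}{}$ (so $\norm{S}{}^2 = \norm{A}{}^2$, and the step-size conditions are in fact unchanged), and that the dual initialization $y_0 = y_{-1}+\sigma(Ax_0 - b^{\delta})$ required by \Cref{lem:unfeas_sameit} is compatible with the initialization demanded by \Cref{prop:early_stop} in the transformed problem. The delicate point is that \Cref{lem:unfeas_sameit} only equates the \emph{primal} iterates $x_k = u_k$, not the dual ones, so all bounds must be phrased purely in terms of the averaged primal sequence $\hat x_k$ and its images under $S$ and $A^*A$; no dual quantity of the normal problem may appear in the final statement, which is consistent with the divergence of $\hat y_k$ noted in \Cref{th:unfeas_norm}. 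Once the transfer of assumptions and the order-$\delta$ control of the transformed noise are established, the two bounds follow immediately from \Cref{prop:early_stop} applied to \eqref{leastsquares}.
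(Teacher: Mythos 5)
Your overall architecture is exactly the paper's: reduce via \Cref{lem:unfeas_sameit} to the normal problem in the form \eqref{leastsquares} with $S=(A^*A)^{1/2}$, check that \Cref{ass:omega,ass:thetarho} transfer because $\nor{S}=\nor{A}$, build the saddle point $(\tilde x,\bar v)$ with $\bar v=S\tilde v$, apply \Cref{prop:early_stop} to the $S$-formulation, and recover the stated bounds through $D^{-A^*A\tilde v}(\hat x_k,\tilde x)=D^{-S\bar v}(\hat u_k,\tilde x)$ and $\nor{S^2\hat u_k-S^2\tilde x}\leq\nor{S}\nor{S\hat u_k-S\tilde x}$. However, there is a genuine gap precisely at the step you yourself flag as crucial: the control of the effective noise level. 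The noise parameter that enters \Cref{prop:early_stop} for the $S$-formulation is $\tilde\delta:=\nor{Sx^\delta-S\tilde x}$, since the noisy and exact constraints are $Sx=Sx^\delta$ and $Sx=S\tilde x$. You declare this "equivalent" to $\nor{A^*b^\delta-A^*b^\star}$ and bound the latter by $\nor{A}\delta$. But $A^*b^\delta-A^*b^\star=S^2(x^\delta-\tilde x)$, so what you have bounded is $\nor{S^2 w}$ with $w=x^\delta-\tilde x$, not $\nor{Sw}=\tilde\delta$. The inequality $\nor{S^2w}\leq\nor{S}\,\nor{Sw}$ goes the wrong way: it gives a \emph{lower} bound on $\nor{Sw}$ in terms of $\nor{S^2w}$, and in the ill-posed setting, where $S$ is not bounded below, no upper bound on $\nor{Sw}$ follows from an upper bound on $\nor{S^2w}$. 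As written, your argument does not bound $\tilde\delta$ at all, so \Cref{prop:early_stop} cannot be invoked with a noise level of order $\delta$.

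The correct estimate, which the paper obtains via the singular value decomposition of $A$, is $\tilde\delta\leq\delta$ (with no extra factor of $\nor{A}$). A short way to see it: since $\nor{Aw}^2=\langle A^*Aw,w\rangle=\nor{Sw}^2$,
\begin{equation*}
\nor{Sw}^2=\langle S^2w,w\rangle=\langle A^*(b^\delta-b^\star),w\rangle=\langle b^\delta-b^\star,Aw\rangle\leq\delta\,\nor{Aw}=\delta\,\nor{Sw},
\end{equation*}
hence $\nor{Sw}\leq\delta$. With this substituted for your operator-norm step, the rest of your plan goes through and reproduces the paper's proof, including the observation that only the primal iterates are identified by \Cref{lem:unfeas_sameit} and that the dual initialization $v_{-1}$ with $Sv_{-1}=A^*y_{-1}$ exists because $R(A^*)=R(S)$.
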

The proof can be found in \Cref{app:unfeas_3}.
\begin{remark}
We think that the assumption $\tilde{\mathcal{C}}^\delta\neq \emptyset$ is a technical byproduct of our analysis (we need to assume it to use \Cref{lem:unfeas_sameit}), but not necessary in order to get the results in \Cref{th:unfeas_stab}.
\end{remark}

\begin{example}
It is easy to find an example explaining the meaning and the importance of the previous result. Consider the following setting in $\X=\R^2$.
Let the inexact linear system $Ax=b^\delta$ identify a line on the plane and let $R: \ \R^2 \to \R$ be a convex and lower-semicontinuous function that is an exponential when restricted to the inexact constraint $\mathcal{C}^\delta$. Then, $\tilde{\mathcal{C}}^\delta=\mathcal{C}^\delta\neq \emptyset$ but $\tilde{\PP}^\delta=\PP^\delta= \emptyset$. In particular we are in a case of severe instability: the averaged primal iterates $(\hat{x}_k)$, generated by the algorithm when applied to problem $\PP^\delta=\emptyset$, may diverge. Now consider the two following scenarios.
\begin{itemize}
    \item Let the exact linear system $Ax=b^\star$ identify a line in $\R^2$ (parallel to $\mathcal{C}^\delta$) and let $R: \ \R^2 \to \R$ be coercive on the exact constraint $\mathcal{C}^\star$. Then the primal exact problem admits minimizers ($\mathcal{P}^\star\neq\emptyset$), while the noisy one does not have solutions even if it is feasible. In this setting, the assumptions of \Cref{prop:early_stop} hold and thus our early-stopping bounds guarantee an efficient way to find a stable solution.
    \item Now suppose that the exact linear system $Ax=b^\star$ does not admit solutions ($b^\star\notin R(A)$) and let the exact normal system $A^*Ax=A^*b^\star$ identify a line in $\R^2$. Moreover, similarly to the previous example, let $R: \ \R^2 \to \R$ be coercive on the exact normal constraint $\tilde{\mathcal{C}}^\star$. The primal exact problem does not admit feasible points and so neither minimizers ($\mathcal{P}^\star=\emptyset$). Then, in this case, the assumptions in \Cref{prop:early_stop} are not verified. On the other hand, the exact normal problem has solutions ($\tilde{\mathcal{P}}^\star=\emptyset$) and $\tilde{\mathcal{C}}^\delta\neq \emptyset$, so we still can apply \Cref{th:unfeas_stab} to get an a similar early-stopping result, but with respect to any exact normal solution.
\end{itemize}
\end{example}
\section{Experiments}
\label{sec:experiments}

A high quality Python package implementing our iterative regularization approach, with reproducible experiments, is available at \url{https://lcsl.github.io/iterreg}.

\subsection{Sparse recovery with the $\ell_1$ norm}
\label{sub:xp_vs_lasso}

First we illustrate numerically the results of \Cref{sec:ell1} ($R(\cdot) = \nor{\cdot}_1$, $F=0$) on both real data and simulations.
The simulated data is generated as $b^\delta = b^\star + \epsilon = A \bar{x} + \epsilon$.
The design matrix $A$ has Gaussian entries with a Toeplitz correlation structure (correlation between columns $i$ and $j$ is $\rho ^{|i-j|}$ for $\rho \in [0, 1[$; as $\rho$ approaches 1, the problem becomes more and more difficult).
The noise vector $\epsilon$ has i.i.d. Gaussian entries, with standard deviation scaled to control the signal-to-noise ratio (SNR), defined as $\nor{A\bar x} / \nor{\epsilon}$.
The true parameter vector $\bar{x}$ has 10 \% non zero entries set to 1 ; note that the noiseless solution $\xs$ is not necessarily $\bar x$ -- in particular the $\ell_0$ and $\ell_1$ solutions tend to differ if the feature correlation parameter $\rho$ is too high or if the sparsity of $\bar x$ is not low enough.
In Algorithm \eqref{eq:algo}, unless specified otherwise, we use exact prox ($\varepsilon_k = 0$), as well as scalar preconditioners $T = \tau \Id$ and $\Sigma = \sigma \Id$.

The explicit, Tykhonov regularization competitor in this case is the Lasso.

\paragraph{Datadriven choice of stepsize $\sigma$.}

A key distinction between iterative and Tykhonov regularization is that our iterative approach produces discrete iterates, while the Tykhonov path can be discretized with arbitrary precision.
Hence, our algorithm could converge too fast to the noisy solution, preventing us from finding a good early stopped iterate.
Fortunately, it is possible to act on the dual stepsize $\sigma$ so that the iterates remain sparse in the beginning (in the same way as, for the Lasso, the solutions are sparse for large regularization strength $\lambda$).
On \Cref{fig:datadriven} we illustrate multiple choices for $\sigma$, keeping $\sigma \tau$ equal to $ 0.99/\nor{A}^2$: $\sigma \in \{\tau, \tau / 100, 1 / \Vert A^* b^\delta \Vert_\infty, \tau / 10000\}$.
The order of magnitude $\sigma = 1/\Vert A^* b^\delta \Vert_\infty$ is reversed engineered from the first iterations of \eqref{eq:algo} with $x_0 = 0$, $y_{-1} = y_0 = 0$, yielding $y_1 = - \sigma b^\delta$ and ensuring that $x_2 = \prox_{\tau \Vert \cdot \Vert_1}{(2\tau\sigma A^* b^\delta)}$ remains sparse enough.
\\The performance of iterative regularization is measured by the F1 score between the support of the iterates and the support of the true parameters, $\bar x$.
As visible on \Cref{fig:datadriven}, the higher $\sigma$, the faster the primal iterates $x_k$ become dense, thus overestimating the support of $\bar x$.
From the figure, one can see that the datadriven choice of $\sigma$ provides a good balance between quality of the regularization (it reaches the highest F1 score) and convergence speed (optimal score reached after 15 iterations only).

\begin{figure}[t]
    \centering
    \includegraphics[width=0.8\linewidth]{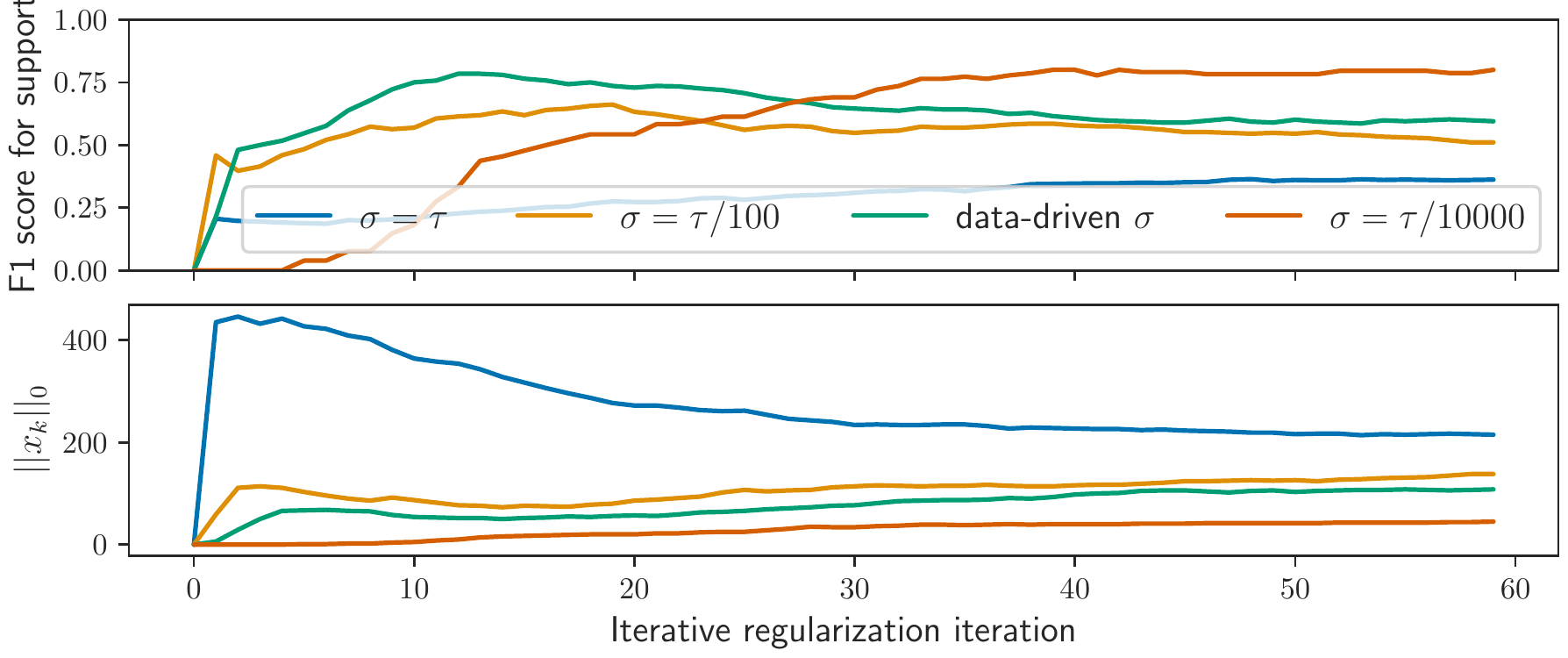}
    \caption{To maintain sparsity in the early iterates, it is important to set $\sigma$ correctly:
    if it is too big, the iterates are dense too quickly (blue curve); if it is too low, convergence is too slow (red). Our datadriven choice behaves well: the iterates sparsity increases steadily, and they reach the highest F1 score. $(n, d, \rho) = (200, 500, 0.2)$, $\nor{A \bar{x}} / \nor{\epsilon} = 10$.}
    \label{fig:datadriven}
\end{figure}

\paragraph{Comparison with the Lasso  on simulations.}
In this experiment, we compare the support recovery performance to that of the Lasso.
In order to have a ground truth available, we use a simulated setup.
The data for this experiment has 1000 samples and 2000 features.
The performance of iterative and Tykhonov regularization is evaluated with the F1 score for support estimation, and normalized mean squared error on left out data (250 additional samples) for prediction, $\Vert b^{\delta, \, \mathrm{test}} - A^{\mathrm{test}} \Vert^2 / \Vert b^{\delta, \, \mathrm{test}} \Vert^2$.
We study two scenarios: an ``easy'' one ($\mathrm{SNR} = 5$, low feature correlation factor $\rho=0.2$) and a more challenging one ($\mathrm{SNR} = 3$, $\rho=0.8$).
On \Cref{fig:lasso_simu}, one can see that the estimation and prediction performances are comparable between iterative regularization and explicit regularization, illustrating the numerical guarantees of \Cref{sec:ell1}.

\begin{figure}[t]
    \centering
    \includegraphics[width=0.45\linewidth]{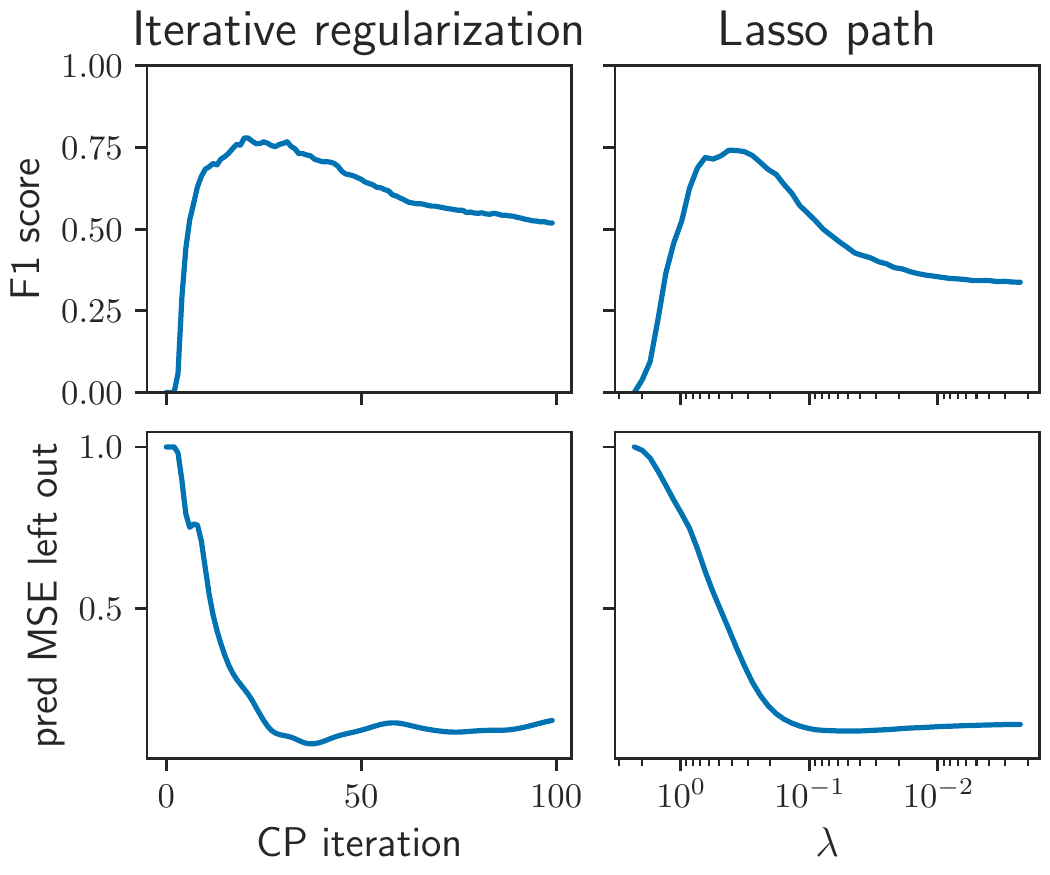}
    \hfill
    \includegraphics[width=0.45\linewidth]{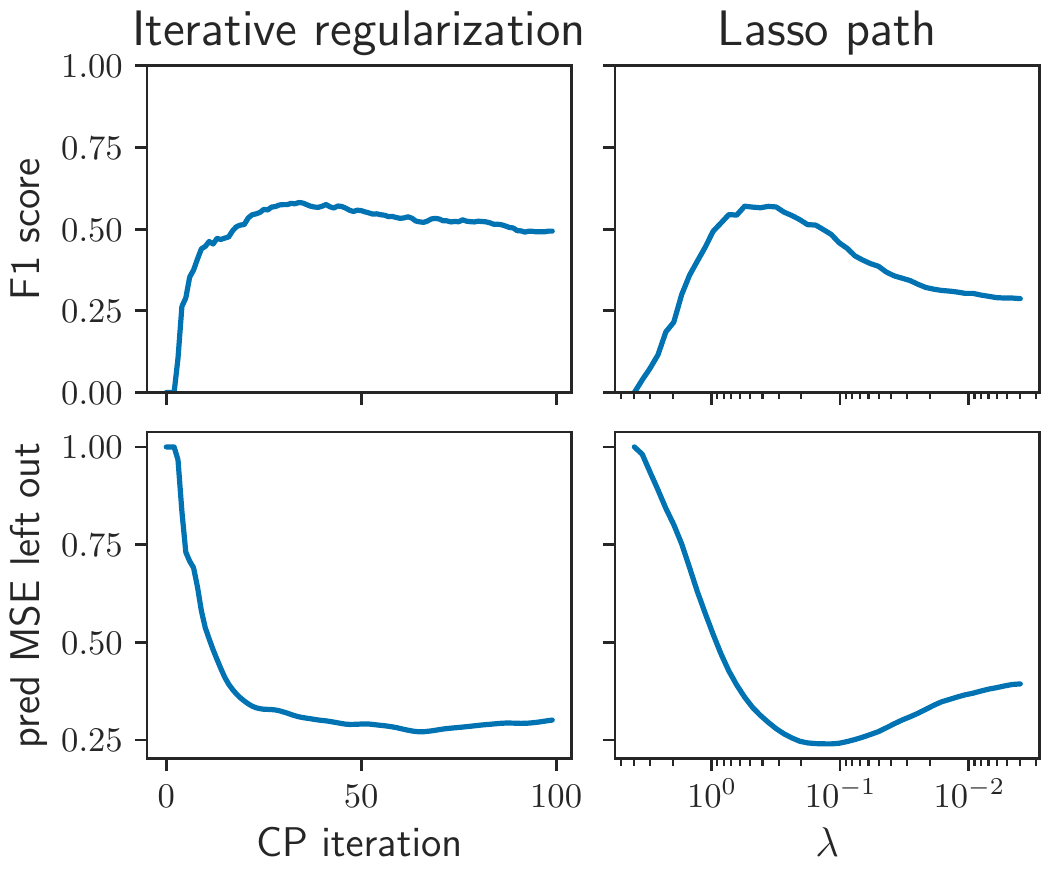}
    \caption{Comparison of estimation and prediction performances of iterative and Tykhonov regularization for sparse recovery.
    Left: feature correlation factor $\rho = 0.2$, $\mathrm{SNR} = 5$. Right: correlation factor $\rho = 0.8, \mathrm{SNR} = 3$.
    In both scenarios, iterative regularization attains performances similar to explicit regularization, but in a few iterations.}
    \label{fig:lasso_simu}
\end{figure}

\paragraph{Timing comparison with the Lasso on real data.}

Finally, we benchmark our approach on real data, where the true support is unknown and the best model must be selected by cross validation

In \Cref{fig:regpath_optpath}, we compare the quality of solutions obtained by iterative regularization and explicit regularization.
The dataset for this experiment is \emph{rcv1} from the LIBSVM package\footnote{\url{https://github.com/mathurinm/libsvmdata}}, for which $(n, d) = (\num{20242}, \num{19959})$.
In order to select the best regularization strength for each approach (iteration or value of $\lambda$), we use the prediction mean squared error with 4-fold cross validation: the data $(A, b^\delta)$ is split in 4 folds and each method is run 4 times on 3 folds, while the MSE is computed on the remaining, unseen fold (dashed colored lines).
The MSE is then averaged across folds (thick black line), and the best iteration/$\lambda$ is determined by its minimum.
Note that this approach does not rely on the knowledge of the true parameters $\bar x$ and is thus the one we advocate to use to determine the optimal stopping time in practice.

To solve the Lasso, we use the state-of-the-art solver \texttt{celer} \cite{massias2019dual}, based on coordinate descent, an active set strategy and Anderson acceleration.
Extensive validation in \cite{massias2019dual} showed that this algorithm was currently the fastest one available to solve the Lasso.
Warm-start is used along the path: the solution for the previous $\lambda$ is used as initialization for the next one.
With all these improvements over a basic forward-backward solver, the time to compute the best solution (the path up to the best $\lambda$, if it were known in advance) is 125 seconds.
This is because 69 Lasso problems must be solved (the optimal $\lambda$ is the $69$-th on the grid), each one being increasingly difficult as $\lambda$ decreases.

On the contrary, iterative regularization finds its optimal solution along the optimization path in 2.5 s.
The cost of each iteration is $\mathcal{O}(nd)$, making the algorithm very fast.
One can see that in terms of prediction error on left-out data (4-fold cross validation being used to determine both the best $\lambda$ for the Lasso and the best early stopping for our approach), both methods reach a similar performance, with a best average MSE around 0.2.
In addition, using our proposed datadriven stepsize, we obtain a sparser solution than the Lasso: ours has \num{1583} non zeros entries, while the optimal Lasso one has \num{2820}.

\begin{figure}[t]
    \centering
    \includegraphics[width=\linewidth]{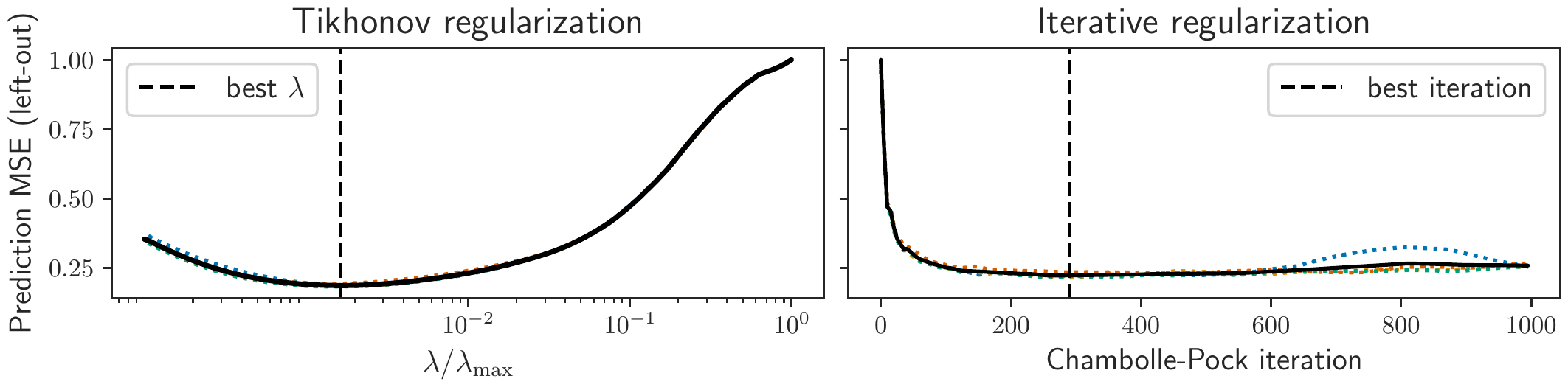}
    \caption{Comparison of Tikhonov regularization and iterative regularization. The figure of merit is 4-fold cross validation prediction error.
    Both methods reach similar lowest prediction errors (left: 0.195, right: 0.21)  while the iterative approach is much faster (2.5 s vs. 125 s).
    }
    \label{fig:regpath_optpath}
\end{figure}

\subsection{Preconditioning}
In this experiment we highlight the usefulness of a preconditioning.
We consider two diagonal preconditioners, following \cite{pock2011diagonal}: $T = \theta \diag(||A_{:1}||^2, \ldots, ||A_{:d}||^2)$ and $\Sigma = \tfrac{1}{\theta} \diag(\nor{A_{1:}}_0, \ldots, \nor{A_{n:}}_0) = \frac{d}{\theta} \Id$.
The scaling factor $\theta$ is set to get $\sigma$ as in the datadriven choice detailed above.
This choice of $T$ and $\Sigma$ satisfies $\tau_M \sigma_M \leq 1 / \nor{A}^2$ \cite[Lemma 2]{pock2011diagonal}.
The design matrix $A$ is generated as in \Cref{sub:xp_vs_lasso}, but each column is then scaled by a uniform random number between 1 and 5, resulting in different column norms and thus in $T$ being different from a scalar matrix.
On \Cref{fig:precond}, one an see that using coordinate-wise stepsizes through the use of $T$ in the update of the primal variable, is beneficial for iterative regularization as a higher F1 score is reached.

\begin{figure}[t]
    \centering
    \includegraphics[width=0.6\linewidth]{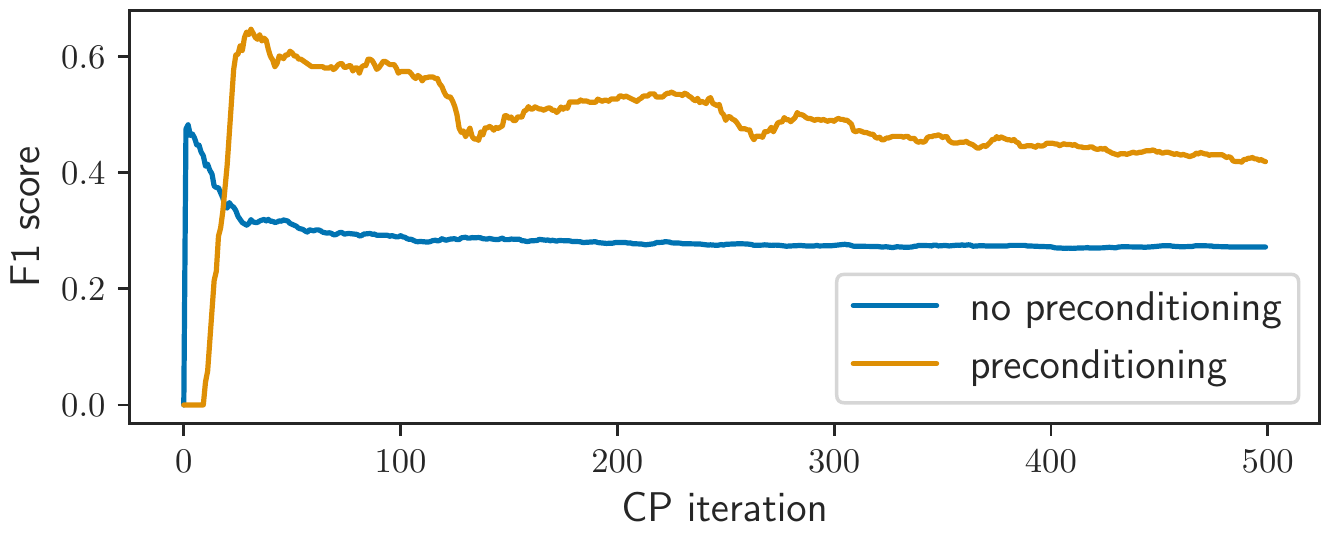}
    \caption{Benefit of preconditioning for sparse recovery on an unnormalized simulated dataset. $(n d) = (500, 1000)$.}
    \label{fig:precond}
\end{figure}

\subsection{Low rank matrix completion}

In this experiment we highlight the versatility of our approach, considering the matrix completion setting of \Cref{ex:low_rank}.
The goal is to recover a low-rank matrix from the noisy observation of a subset of its entries.
Both Hilbert spaces $\X$ and $\Y$ are taken equal to $\bbR^{d \times d}$, and we use upper case letters $X$ and $B$ to denote the primal variable and the observations.
The true matrix to recover is chosen as $B^\star = UV^\top$ where $U, V \in \bbR^{d \times 5}$ have i.i.d. normal entries.
In order to get meaningful values for $\delta$, we scale $B^\star$ so that it has a norm equal to $20$.
Finally, for a range of values of $\delta$, various $B^\delta$ are obtained by adding scaled random Gaussian noise to the observed entries of $B^\star$
We choose to hide 80 \% of entries of $B^\delta$, uniformly sampled.
The matrix $A$ corresponds to the masking operator; we have $\nor{A}_2 = 1$ and thus use $\sigma = \tau = 0.99$.
We tune the parameter $\sigma$ similarly to the $\ell_1$ case, taking $\sigma = 1 / \nor{A^*B^\delta}_{2}$.
\Cref{fig:low_rank} highlights the semiconvergence behavior exploited by iterative regularization: the iterates produced by \eqref{eq:algo} first get closer to the noiseless solution, before converging to the noisy solution.
Early-stopping the iterate at a correct iteration is thus beneficial.

\begin{figure}[t]
    \centering
    \includegraphics[width=0.45\linewidth]{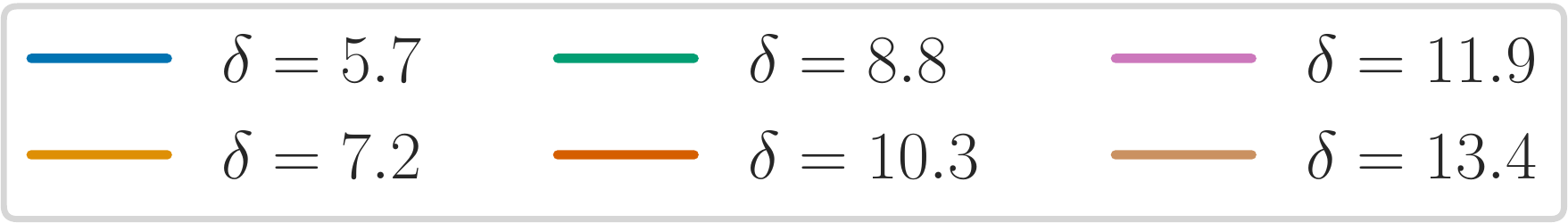} \\
    \includegraphics[width=0.45\linewidth]{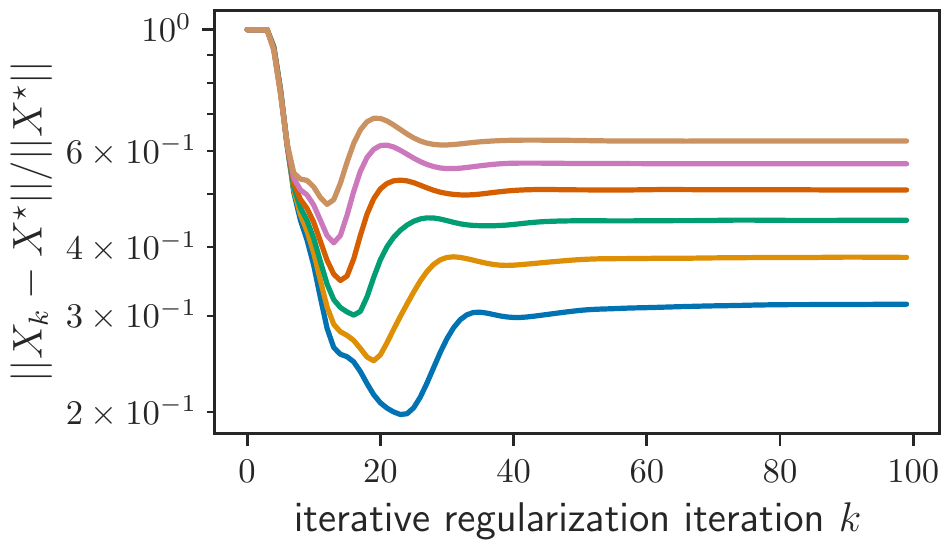}
    \hfill
    \includegraphics[width=0.45\linewidth]{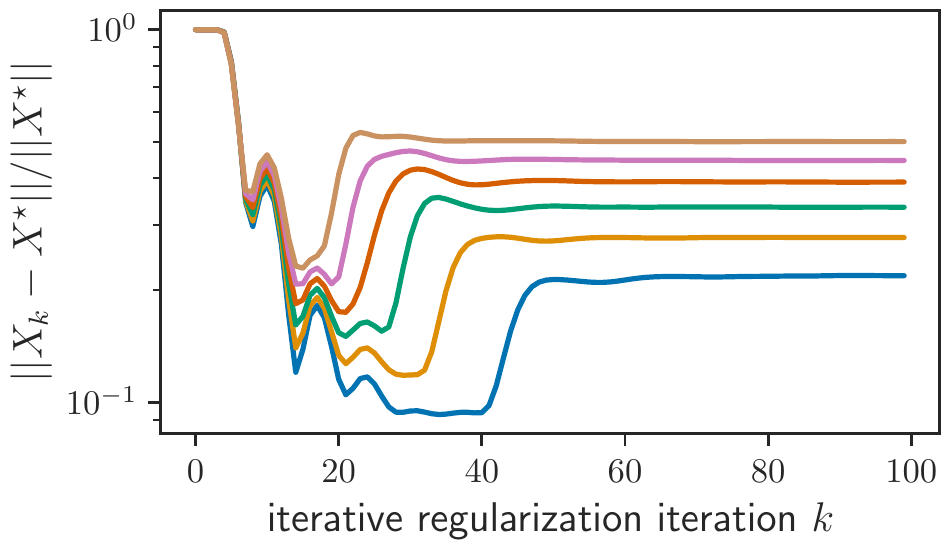}
    \caption{Semiconvergence of iterates for the low rank matrix completion problem, in dimension $200 \times 200$ (left) and $500 \times 500$ (right).
    The iterates first get close to the noiseless solution, before converging to the noisy solution.}
    \label{fig:low_rank}
\end{figure}

\section{Conclusion}
In this work, we have considered the problem of designing iterative regularization  algorithms for bias described by a wide class of  convex functionals.
We proposed and study an iterative regularization method based on a primal-dual approach of which we characterize convergence and especially stability in the presence of noisy data. This latter results allow to derive and early stopping procedure and corresponding error bounds, comparable with those obtainable with variational regularization techniques. Empirical results complement and confirm our theoretical findings, showing that iterative regularization can be at the same time accurate and efficient.

A number of research directions remains unexplored. For example it would be interesting to consider stochastic gradient approaches, that often results in further efficiency improvement. It would also be interesting to extend the considered model to account for other form of noise/errors, including data models  in machine learning, but also considering other, possibly non convex,  penalties. Finally, it would be interesting to consider nonlinear models, and in particular compositional models such as those defining neural networks.

\clearpage
\appendix

\section{Preliminary lemmas}
\label{app:sec:prelim}
\begin{lemma}[{\cite[Lemma 2]{schmidt2011convergence}}]\label{lem:u_n_upper_bound}
	Assume that $(u_j)$ is a non-negative sequence, $(S_j)$ is a non-decreasing sequence with $S_0\geq u_0^2$ and $\lambda\geq 0$ such that, for every $j\in\N$,
	\begin{equation}
		u_j^2\leq S_j+\lambda\sum_{i=1}^{j} u_i \enspace.
	\end{equation}
Then, for every $j\in\N$,
	\begin{equation}
	u_j\leq \frac{\lambda j}{2}+\sqrt{S_j+\left(\frac{\lambda j}{2}\right)^2} \enspace.
\end{equation}
\end{lemma}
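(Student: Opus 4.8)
The plan is to eliminate the accumulated sum on the right-hand side by replacing it with the running maximum of the sequence, which turns the convolution-type recursion into a single scalar quadratic inequality that can be solved in closed form. The positive root of that quadratic is exactly the bound we are asked to prove, which both motivates and validates the strategy.

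First I would introduce the running maximum $U_j := \max_{0 \le i \le j} u_i$. This quantity is nonnegative (since every $u_i$ is), it dominates $u_j$, so any upper bound on $U_j$ transfers to $u_j$, and crucially it linearizes the troublesome partial sum. Next I would bound each term uniformly over $i \le j$. For $1 \le i \le j$ the hypothesis together with the monotonicity of $(S_j)$ gives $u_i^2 \le S_i + \lambda \sum_{\ell=1}^{i} u_\ell \le S_j + \lambda\, i\, U_j \le S_j + \lambda j U_j$, where each summand $u_\ell$ has been replaced by $U_j$ and $S_i \le S_j$ has been used; the remaining case $i = 0$ is handled separately via $u_0^2 \le S_0 \le S_j + \lambda j U_j$. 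Taking the maximum over $0 \le i \le j$ collapses all of this into the single inequality $U_j^2 \le S_j + \lambda j U_j$.

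Finally I would read $U_j^2 - \lambda j U_j - S_j \le 0$ as a quadratic inequality in $U_j$. Since the leading coefficient is positive and $S_j \ge S_0 \ge u_0^2 \ge 0$, the variable $U_j$ must lie below the larger root, that is $U_j \le \frac{\lambda j}{2} + \sqrt{\left(\frac{\lambda j}{2}\right)^2 + S_j}$, and the claim follows from $u_j \le U_j$. (The edge case $j = 0$ is consistent: the sum is empty, so $u_0^2 \le S_0$ yields $u_0 \le \sqrt{S_0}$, which matches the formula.)

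The one step that carries the argument is the passage from the recursion to the scalar inequality, and its only subtlety is recognizing that bounding the partial sum by $\sum_{\ell \le i} u_\ell \le j U_j$ decouples the dependence and allows the maximum to be taken. This is a deliberately crude estimate, but it is precisely what makes the inequality self-referential in $U_j$ alone; everything after it is the elementary solution of a quadratic and is entirely routine.
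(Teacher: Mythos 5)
Your proof is correct. Note that the paper itself gives no argument for this lemma --- it is imported verbatim as \cite[Lemma 2]{schmidt2011convergence} in the preliminary section of the appendix --- so the only meaningful comparison is with the proof in that reference, which establishes the same scalar quadratic inequality for the running maximum but organizes the argument as an induction on $j$ with a case split on whether $u_j$ exceeds $\max\{u_0,\dots,u_{j-1}\}$. Your version reaches the inequality $U_j^2 \leq S_j + \lambda j\, U_j$ directly by bounding every $u_i^2$ ($0 \leq i \leq j$) uniformly and taking the maximum, which dispenses with the induction entirely; all the needed hypotheses (nonnegativity of $(u_i)$ so that $U_j^2 = \max_i u_i^2$, monotonicity of $(S_j)$, the separate case $i=0$ via $S_0 \geq u_0^2$, and $S_j \geq 0$ for the root formula) are invoked exactly where they are used, so there is no gap.
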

\begin{lemma}[{Descent lemma, \cite[Thm 18.15 (iii)]{Bauschke_Combettes11}}]\label{desclemm}Let $f: \X \to \R$ be Fréchet differentiable with $L$-Lipschitz continuous gradient. Then, for every $x$ and $y\in\X$,
	\begin{equation}
		f(y)\leq f(x)+\langle \nabla f(x),y-x\rangle +\frac{L}{2}\norm{y-x}{}^2 \enspace.
	\end{equation}
\end{lemma}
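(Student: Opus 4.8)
The plan is to reduce the inequality to a one-dimensional computation along the segment joining $x$ and $y$, and then to exploit the Lipschitz bound on the gradient. First I would introduce the auxiliary function $\phi(t) := f(x + t(y-x))$ on $[0,1]$. Since $f$ is Fréchet differentiable with continuous (indeed $L$-Lipschitz) gradient, the map $t \mapsto \nabla f(x + t(y-x))$ is norm-continuous, so $\phi$ is continuously differentiable on $[0,1]$ with $\phi'(t) = \langle \nabla f(x + t(y-x)), y - x \rangle$. The fundamental theorem of calculus then gives
\[
f(y) - f(x) \;=\; \phi(1) - \phi(0) \;=\; \int_0^1 \langle \nabla f(x + t(y-x)), \, y-x \rangle \, dt \enspace.
\]

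Next I would subtract the linear term, writing $\langle \nabla f(x), y - x\rangle = \int_0^1 \langle \nabla f(x), y-x\rangle\,dt$, so that the quantity to be bounded becomes a single integral of the gradient increment:
\[
f(y) - f(x) - \langle \nabla f(x), y-x\rangle \;=\; \int_0^1 \langle \nabla f(x+t(y-x)) - \nabla f(x),\; y-x\rangle \, dt \enspace.
\]
Then I would estimate the integrand pointwise by Cauchy--Schwarz followed by the Lipschitz hypothesis,
\[
\langle \nabla f(x+t(y-x)) - \nabla f(x),\, y-x\rangle \;\leq\; \nor{\nabla f(x+t(y-x)) - \nabla f(x)}\,\nor{y-x} \;\leq\; L\,t\,\nor{y-x}^2 \enspace,
\]
using $\nor{t(y-x)} = t\,\nor{y-x}$ in the last step. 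Integrating the elementary bound $\int_0^1 L t \, dt = L/2$ over $[0,1]$ then yields exactly $\frac{L}{2}\nor{y-x}^2$, which is the claimed inequality.

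The argument is essentially routine; the only point requiring a moment of care is the validity of the fundamental theorem of calculus in the Hilbert-space setting, but this is immediate here because Lipschitzness of $\nabla f$ forces continuity of $t \mapsto \nabla f(x+t(y-x))$, making $\phi$ genuinely $C^1$ on the unit interval. I would emphasize that \emph{no convexity of $f$ is used}: only differentiability together with the Lipschitz gradient enters, so the estimate holds in the full generality stated.
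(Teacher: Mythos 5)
Your proof is correct and is the standard argument (segment parametrization, fundamental theorem of calculus, Cauchy--Schwarz, Lipschitz bound, integrate $Lt$ over $[0,1]$); the paper itself gives no proof, citing \cite[Thm 18.15 (iii)]{Bauschke_Combettes11}, where essentially this same computation appears. Your remarks on the validity of the FTC in the Hilbert setting and on convexity not being needed are both accurate.
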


\begin{lemma}\label{proxnonexp}
Let $\mathcal{Z}$ denote $\X$ or $\Y$ and $U$ denote $T$ or $\Sigma$ accordingly.
Let $f \in \Gamma_0(\mathcal{Z})$ and $\varepsilon \geq 0$.
It follows easily from the definition of the $\varepsilon$-subdifferential that if $a, b \in \mathcal{Z}$ satisfy
\begin{equation}
    U^{-1}\left(a - b\right) \in \partial_{\varepsilon} f(b)\enspace,
\end{equation}
then, for every $c \in \mathcal{Z}$,
\begin{equation}
	f(b) - f(c) + \frac{1}{2}\nor{b - c}^2_U - \frac{1}{2}\nor{a - b}^2_U + \frac{1}{2}\nor{b - a}^2_U  \leq \varepsilon \enspace.
\end{equation}
\end{lemma}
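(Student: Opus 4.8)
The inequality is a direct consequence of the definition of the $\varepsilon$-subdifferential combined with the elementary three-point (cosine-rule) identity for the inner product twisted by $U^{-1}$. The plan is the following.

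First I would unpack the hypothesis $U^{-1}(a-b) \in \partial_\varepsilon f(b)$. By the very definition of the $\varepsilon$-subdifferential recalled in the Notation paragraph, testing the defining inequality against the point $c \in \mathcal{Z}$ yields
\begin{equation*}
    f(b) - f(c) \leq \langle U^{-1}(a - b),\, b - c \rangle + \varepsilon \enspace.
\end{equation*}
This is the only place where both the subdifferential hypothesis and the tolerance $\varepsilon$ enter.

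Next I would rewrite the inner product. Since $U$ (being either $T$ or $\Sigma$) is self-adjoint and positive by \Cref{ass:TSigm}, so is $U^{-1}$, and $\langle x, y\rangle_U := \langle U^{-1}x, y\rangle$ is a genuine inner product inducing the norm $\nor{\cdot}_U$. Expanding $\nor{a-c}^2_U = \nor{(a-b)+(b-c)}^2_U$ and using bilinearity gives the three-point identity
\begin{equation*}
    \langle U^{-1}(a - b),\, b - c \rangle = \tfrac{1}{2}\nor{a - c}^2_U - \tfrac{1}{2}\nor{a - b}^2_U - \tfrac{1}{2}\nor{b - c}^2_U \enspace .
\end{equation*}
Substituting this into the previous display and moving all the norm terms to the left-hand side produces the claim, after recalling $\nor{a-b}_U = \nor{b-a}_U$.

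I expect no genuine obstacle, as the statement itself signals (``follows easily''). The single point deserving care is that the displayed target inequality appears to contain a typographical slip: as written, the term $-\tfrac{1}{2}\nor{a-b}^2_U$ cancels with $+\tfrac{1}{2}\nor{b-a}^2_U$, which would collapse the claim to the false statement $f(b)-f(c)+\tfrac{1}{2}\nor{b-c}^2_U \leq \varepsilon$. The inequality actually produced by the computation above carries $-\tfrac{1}{2}\nor{a-c}^2_U$ in place of $-\tfrac{1}{2}\nor{a-b}^2_U$, and it is this corrected form (which is exactly the one later invoked to control the proximal steps of \eqref{eq:algo}) that I would establish.
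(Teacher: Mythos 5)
Your argument is correct and is exactly the intended one: the paper states \Cref{proxnonexp} without proof, and testing the definition of the $\varepsilon$-subdifferential at the point $c$ and then expanding $\langle U^{-1}(a-b),\,b-c\rangle$ via the three-point identity for the $U$-inner product (legitimate since $U$ is self-adjoint and positive by \Cref{ass:TSigm}) is all that is needed. You are also right that the displayed inequality contains a typographical slip --- the term $-\tfrac{1}{2}\nor{a-b}^2_U$ should read $-\tfrac{1}{2}\nor{a-c}^2_U$, since otherwise it cancels against $+\tfrac{1}{2}\nor{b-a}^2_U$ and the claim becomes false --- and the corrected form is the one actually invoked to derive \eqref{eq:x_prox} and \eqref{eq:y_prox} in the proof of \Cref{lem:onestep}.
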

\subsection{Primal-dual estimates}

\begin{lemma}[One step estimate]\label{lem:onestep}
	Let \Cref{ass:gen} hold. Let $(x_k,y_k)$ be the sequence generated by iterations \eqref{eq:algo} under \Cref{ass:TSigm}.
	Then, for any $z=(x, y)\in\X \times\Y$ and for any $k\in\N$, with $V(z) := \frac{1}{2}\nor{x}_T^2 + \frac{1}{2} \nor{y}_\Sigma^2$,
	\begin{equation}\label{mainonestep}
		\begin{split}
			& V(z_{k + 1} - z) - V(z_k - z) + \frac{1-\tau_M L}{2\tau_M}\norm{x_{k+1}-x_k}{}^2+ \frac{1}{2}\norm{y_{k+1}-y_k}{\Sigma}^2\\
			& \hspace{3cm} + \left[\LLd(x_{k + 1}, y) - \LLd(x, y_{k + 1})\right] + \langle y_{k + 1} - \tilde y_k, A\left(x -  x_{k + 1}\right)\rangle \leq \varepsilon_{k+1} \enspace.
		\end{split}
	\end{equation}
\end{lemma}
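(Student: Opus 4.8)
The plan is to derive the estimate directly from the optimality characterization of the primal step, combined with the three-point (polarization) identity for the $T^{-1}$- and $\Sigma^{-1}$-weighted inner products. First I would rewrite the primal update via \eqref{eq:x_update_is_prox} as the inclusion $-T^{-1}(x_{k+1}-x_k) - \nabla F(x_k) - A^*\tilde y_k \in \partial_{\varepsilon_{k+1}} R(x_{k+1})$ and apply the definition of the $\varepsilon$-subdifferential (equivalently \Cref{proxnonexp} with $U=T$, $b=x_{k+1}$, $c=x$), testing against the arbitrary point $x$. This yields the master inequality $R(x_{k+1}) - R(x) \le -\langle T^{-1}(x_{k+1}-x_k),\,x_{k+1}-x\rangle - \langle \nabla F(x_k),\,x_{k+1}-x\rangle + \langle \tilde y_k,\,A(x-x_{k+1})\rangle + \varepsilon_{k+1}$, which I would then process term by term.

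For the quadratic term I would apply the identity $\langle T^{-1}(x_{k+1}-x_k),\,x_{k+1}-x\rangle = \tfrac12\norm{x_{k+1}-x_k}{T}^2 + \tfrac12\norm{x_{k+1}-x}{T}^2 - \tfrac12\norm{x_k-x}{T}^2$, producing the primal part of $V(z_{k+1}-z)-V(z_k-z)$ together with a leftover $-\tfrac12\norm{x_{k+1}-x_k}{T}^2$. For the gradient term I would split $x_{k+1}-x = (x_{k+1}-x_k)+(x_k-x)$, bound $-\langle \nabla F(x_k), x_{k+1}-x_k\rangle$ by $F(x_k)-F(x_{k+1})+\tfrac{L}{2}\nor{x_{k+1}-x_k}^2$ using the descent lemma (\Cref{desclemm}), and bound $-\langle \nabla F(x_k), x_k-x\rangle$ by $F(x)-F(x_k)$ using convexity of $F$; summing gives $F(x)-F(x_{k+1}) + \tfrac{L}{2}\nor{x_{k+1}-x_k}^2$. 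Combining the leftover $-\tfrac12\norm{x_{k+1}-x_k}{T}^2$ with $+\tfrac{L}{2}\nor{x_{k+1}-x_k}^2$ and invoking the spectral bound $\norm{\cdot}{T}^2 \ge \tau_M^{-1}\nor{\cdot}^2$ from \Cref{ass:TSigm} produces exactly the coefficient $\tfrac{1-\tau_M L}{2\tau_M}$ on $\nor{x_{k+1}-x_k}^2$.

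At this stage I would convert the $R+F$ differences into Lagrangian form through $\LLd(x_{k+1},y)-\LLd(x,y_{k+1}) = R(x_{k+1})+F(x_{k+1})-R(x)-F(x)+\langle y, Ax_{k+1}-b^\delta\rangle - \langle y_{k+1}, Ax-b^\delta\rangle$, collecting the remaining coupling into $(\star) := \langle y, Ax_{k+1}-b^\delta\rangle - \langle y_{k+1}, Ax-b^\delta\rangle + \langle \tilde y_k, A(x-x_{k+1})\rangle$. The dual bookkeeping is the heart of the argument: I would insert $\pm y_{k+1}$ by writing $\langle \tilde y_k, A(x-x_{k+1})\rangle = \langle y_{k+1}, A(x-x_{k+1})\rangle - \langle y_{k+1}-\tilde y_k, A(x-x_{k+1})\rangle$ and expand $A(x-x_{k+1}) = (Ax-b^\delta) - (Ax_{k+1}-b^\delta)$, after which the terms telescope to $(\star) = \langle y - y_{k+1},\,Ax_{k+1}-b^\delta\rangle - \langle y_{k+1}-\tilde y_k,\,A(x-x_{k+1})\rangle$. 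Using the dual update in the form $Ax_{k+1}-b^\delta = \Sigma^{-1}(y_{k+1}-y_k)$ and the same three-point identity in the $\Sigma^{-1}$-inner product turns the first term into $-\tfrac12\norm{y_{k+1}-y_k}{\Sigma}^2 - \tfrac12\norm{y_{k+1}-y}{\Sigma}^2 + \tfrac12\norm{y_k-y}{\Sigma}^2$, i.e. the dual part of $V(z_{k+1}-z)-V(z_k-z)$ together with $-\tfrac12\norm{y_{k+1}-y_k}{\Sigma}^2$ and the explicit cross term.

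Collecting all pieces and moving everything to the left-hand side assembles $V(z_{k+1}-z)-V(z_k-z)$, the two positive step terms, the Lagrangian gap $\LLd(x_{k+1},y)-\LLd(x,y_{k+1})$, and the cross term $\langle y_{k+1}-\tilde y_k, A(x-x_{k+1})\rangle$, all bounded above by $\varepsilon_{k+1}$, which is precisely \eqref{mainonestep}. I expect the main obstacle to be the dual bookkeeping in the third step: tracking signs while isolating the specific cross term $\langle y_{k+1}-\tilde y_k, A(x-x_{k+1})\rangle$ (which must be kept explicit, since $\tilde y_k \neq y_{k+1}$ reflects the extrapolation step) and arranging the telescoping so that the $\Sigma^{-1}$-polarization yields exactly the dual increment of $V$. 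Notably, \Cref{ass:TSigm} is used only in the passage from the $T$-norm to the Euclidean norm that fixes the coefficient $\tfrac{1-\tau_M L}{2\tau_M}$; everything else is an exact identity together with the two convexity estimates for $F$.
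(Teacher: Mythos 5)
Your proposal is correct and follows essentially the same route as the paper's proof: the $\varepsilon$-subdifferential characterization of the inexact prox step combined with the $T^{-1}$-polarization identity (the paper's \Cref{proxnonexp}), the descent lemma plus convexity of $F$ yielding the three-point estimate \eqref{eq:three_pt}, and the same algebraic regrouping into the Lagrangian gap plus the cross term $\langle y_{k+1}-\tilde y_k, A(x-x_{k+1})\rangle$. The only cosmetic difference is that the paper obtains the dual contribution by viewing the $y$-update as a $\Sigma$-proximal step of $\langle b^\delta,\cdot\rangle$ and summing the two estimates, whereas you substitute $Ax_{k+1}-b^\delta=\Sigma^{-1}(y_{k+1}-y_k)$ directly at the end; the two computations are identical.
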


\begin{proof}
    Let $(x, y) \in \X \times \Y$.
	Applying \Cref{proxnonexp} to the definition of $x_{k+1}$ yields
	\begin{equation}\label{eq:x_prox}
		\begin{split}
			& \frac{1}{2} \nor{x_{k + 1} - x}_T^2 - \frac{1}{2} \nor{x_k - x}_T^2 + \frac{1}{2} \nor{x_{k + 1} - x_k}_T^2 + \left[R(x_{k + 1})-R(x)\right] \\
			& \hspace{4cm} + \langle \tilde y_k, A\left(x_{k + 1} - x\right)\rangle + \langle \nabla F(x_k), x_{k + 1} - x\rangle \ \leq \ \varepsilon_{k+1} \enspace.
		\end{split}
	\end{equation}
	For the dual update, similarly,
	\begin{equation}\label{eq:y_prox}
		\begin{split}
			\frac{1}{2} \nor{y_{k + 1} - y}_{\Sigma}^2 -  \frac{1}{2} \nor{y_k - y}_{\Sigma}^2 + \frac{1}{2} \nor{y_{k + 1} - y_k}_{\Sigma}^2
			        + \langle y_{k + 1} - y, \bb - Ax_{k + 1}\rangle
			        \leq 0 \enspace.
		\end{split}
	\end{equation}
	Recall that $z := (x, y)$ and the definition of $V$.
	Sum \Cref{eq:x_prox,eq:y_prox}:
	\begin{equation}\label{eq:bound_prox}
		\begin{split}
			& V(z_{k + 1} - z) - V(z_k - z) +  V(z_{k + 1} - z_k) + \left[R\left(x_{k + 1}\right)-R(x)\right] \\
			&+ \langle \tilde y_k, A\left(x_{k + 1} - x\right)\rangle + \langle y_{k + 1} - y, \bb - Ax_{k + 1}\rangle + \langle \nabla F(x_k), x_{k + 1} - x\rangle\leq \varepsilon_{k+1} \enspace.
		\end{split}
	\end{equation}
	From the \Cref{desclemm},
	\begin{equation*}
		F(x_{k+1})\leq F(x_k)+ \langle \nabla F(x_k), x_{k + 1} - x_k\rangle+\frac{L}{2}\norm{x_{k+1}-x_k}{}^2 \enspace,
	\end{equation*}
	while from the convexity of $F$,
	\begin{equation*}
		F(x_k)+ \langle \nabla F(x_k), x - x_k\rangle\leq F(x) \enspace.
	\end{equation*}
	Summing the last two equations, one obtains the 3 points descent lemma:
	\begin{equation}\label{eq:three_pt}
		F(x_{k+1})\leq F(x)+ \langle \nabla F(x_k), x_{k + 1} - x\rangle+\frac{L}{2}\norm{x_{k+1}-x_k}{}^2 \enspace.
	\end{equation}
	Summing \Cref{eq:bound_prox,eq:three_pt},
	\begin{equation*}
		\begin{split}
			& V(z_{k + 1} - z) - V(z_k - z) + V(z_{k+1} - z_k)\\
			& + \left[R+F\right](x_{k+1})- \left[R+F\right](x) + \langle \tilde y_k, A\left(x_{k + 1} - x\right)\rangle + \langle y_{k + 1} - y, \bb - Ax_{k + 1}\rangle \\
			\leq \ \ & \frac{L}{2}\norm{x_{k+1}-x_k}{}^2 + \varepsilon_{k+1} \enspace.
		\end{split}
	\end{equation*}
	Now compute
	\begin{equation*}
		\begin{split}
			& \left[R+F\right](x_{k+1})- \left[R+F\right](x) + \langle \tilde y_k, A\left(x_{k + 1} - x\right)\rangle + \langle y_{k + 1} - y, \bb - Ax_{k + 1}\rangle\\
			= \ &   \left[\LLd(x_{k + 1}, y) - \LLd(x, y_{k + 1})\right]-\langle y,  Ax_{k + 1} - \bb \rangle  + \langle y_{k + 1}, Ax - \bb \rangle \\
			& + \langle \tilde y_k, A\left(x_{k + 1} - x\right)\rangle+  \langle  y_{k + 1} - y, \bb-Ax_{k + 1}\rangle\\
			= \ & \left[\LLd(x_{k + 1}, y) - \LLd(x, y_{k + 1})\right]- \langle y_{k + 1}-y, \bb \rangle - \langle y, Ax_{k + 1}\rangle + \langle y_{k + 1}, A x \rangle\\
			& + \langle \tilde y_k, A x_{k + 1}\rangle - \langle \tilde y_k, A x\rangle+  \langle y_{k + 1} - y, \bb\rangle - \langle y_{k + 1} - y, Ax_{k + 1}\rangle\\
			= \ & \left[\LLd(x_{k + 1}, y) - \LLd(x, y_{k + 1})\right]\\
			& - \langle y, Ax_{k + 1}\rangle+ \langle y_{k + 1}, A x\rangle + \langle \tilde y_k, Ax_{k + 1}\rangle - \langle \tilde y_k, A x\rangle -  \langle y_{k + 1}, Ax_{k + 1}\rangle+  \langle y, Ax_{k + 1}\rangle\\
			= \ &  \left[\LLd(x_{k + 1}, y) - \LLd(x, y_{k + 1})\right] + \langle y_{k + 1} - \tilde y_k, A\left(x -  x_{k + 1}\right)\rangle \enspace.
		\end{split}
	\end{equation*}
	Notice that
	\begin{equation}
		\frac{1}{2\tau_M}\norm{x_{k+1}-x_k}{}^2 \leq \frac{1}{2}\norm{x_{k+1}-x_k}{T}^2 \enspace.
	\end{equation}
	Finally,
	\begin{equation*}
		\begin{split}
			& V(z_{k + 1} - z) - V(z_k - z) + \frac{1-\tau_M L}{2\tau_M}\norm{x_{k+1}-x_k}{}^2+ \frac{1}{2}\norm{y_{k+1}-y_k}{\Sigma}^2\\
			& \hspace{3cm}
			+ \left[\LLd(x_{k + 1}, y) - \LLd(x, y_{k + 1})\right] + \langle y_{k + 1} - \tilde y_k, A\left(x -  x_{k + 1}\right)\rangle \leq \varepsilon_{k+1} \enspace.
		\end{split}
	\end{equation*}
\end{proof}
\begin{lemma}[First cumulating estimate]\label{lem:cum1}
	Let \Cref{ass:gen} hold.
    Let $(x_k,y_k)$ be the sequence generated by iterations \eqref{eq:algo} under \Cref{ass:TSigm}.
    Define $\omega:= 1 - \tau_M(L+\sigma_M\nor{A}^2)$.
    Then, for any $(x, y)\in\X \times\Y$ and for any $k\in\N$,
	\begin{equation}
		\begin{split}
			& \tfrac{1-\tau_M\sigma_M\norm{A}{}^2}{2\tau_M}\nor{x_{k} - x}^2
			+ \frac{1}{2}\norm{y_{k} - y}{\Sigma}^2
			+  \sum_{j=1}^{k}\left[\LLd(x_j,y) - \LLd(x, y_j)\right]
			+\frac{\omega}{2\tau_M}\sum_{j=1}^{k}\nor{x_j - x_{j - 1}}^2  \\
			&\leq    V(z_0 - z) + \sum_{j=1}^{k}\varepsilon_{j}.
		\end{split}
	\end{equation}
\end{lemma}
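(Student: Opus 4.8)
The plan is to obtain the cumulated estimate by summing the one-step bound of \Cref{lem:onestep} and telescoping, the only genuine work lying in controlling the coupling term $\langle y_j-\tilde y_{j-1},A(x-x_j)\rangle$. First I would reindex \eqref{mainonestep}, writing it with $k$ replaced by $j-1$ so that it involves $x_j,y_j$ and the Lagrangian pair $\LLd(x_j,y)-\LLd(x,y_j)$, and sum it over $j=1,\dots,k$. The differences $V(z_j-z)-V(z_{j-1}-z)$ telescope to $V(z_k-z)-V(z_0-z)$, the Lagrangian contributions accumulate into $\sum_{j=1}^k[\LLd(x_j,y)-\LLd(x,y_j)]$, and the squared increments $\tfrac{1-\tau_M L}{2\tau_M}\nor{x_j-x_{j-1}}^2$ and $\tfrac12\norm{y_j-y_{j-1}}{\Sigma}^2$ accumulate as written. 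After moving $V(z_0-z)$ and $\sum_{j=1}^k\varepsilon_j$ to the right-hand side, it remains to lower bound the accumulated coupling term by a quantity that downgrades the coefficient $\tfrac{1}{2\tau_M}$ of $\nor{x_k-x}^2$ inside $V(z_k-z)$ to $\tfrac{1-\tau_M\sigma_M\nor{A}^2}{2\tau_M}$ and the coefficient $\tfrac{1-\tau_M L}{2\tau_M}$ of the increments to $\tfrac{\omega}{2\tau_M}$.

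The key identity is the extrapolation relation $y_j-\tilde y_{j-1}=(y_j-y_{j-1})-(y_{j-1}-y_{j-2})=:\Delta_j-\Delta_{j-1}$, with $\Delta_j:=y_j-y_{j-1}$. I would then apply summation by parts to $\sum_{j=1}^k\langle \Delta_j-\Delta_{j-1},A(x-x_j)\rangle$; since consecutive differences of $A(x-x_j)$ equal $A(x_{j+1}-x_j)$, this produces the boundary term $\langle\Delta_k,A(x-x_k)\rangle$ together with the sum $\sum_{j=1}^{k-1}\langle\Delta_j,A(x_{j+1}-x_j)\rangle$. The initial boundary term is killed by the natural initialization $y_{-1}=y_0$, i.e. $\Delta_0=0$, which makes the first extrapolation trivial.

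Finally I would estimate each inner product by Young's inequality, tuning the constant so that the dual part falls exactly into $\tfrac12\norm{\cdot}{\Sigma}^2$. Using $\nor{\Delta_j}^2\leq\sigma_M\norm{\Delta_j}{\Sigma}^2$ and $\nor{A(x_{j+1}-x_j)}\leq\nor{A}\,\nor{x_{j+1}-x_j}$ from \Cref{ass:TSigm}, one gets $\langle\Delta_j,A(x_{j+1}-x_j)\rangle\geq-\tfrac12\norm{\Delta_j}{\Sigma}^2-\tfrac{\sigma_M\nor{A}^2}{2}\nor{x_{j+1}-x_j}^2$, and likewise $\langle\Delta_k,A(x-x_k)\rangle\geq-\tfrac12\norm{\Delta_k}{\Sigma}^2-\tfrac{\sigma_M\nor{A}^2}{2}\nor{x-x_k}^2$. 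The dual penalties are soaked up by the accumulated $\tfrac12\sum_{j=1}^k\norm{y_j-y_{j-1}}{\Sigma}^2$ (the interior ones by $j\leq k-1$, the boundary one by $j=k$); each primal penalty $\tfrac{\sigma_M\nor{A}^2}{2}\nor{x_{j+1}-x_j}^2$ is precisely the gap between $\tfrac{1-\tau_M L}{2\tau_M}$ and $\tfrac{\omega}{2\tau_M}$; and $\tfrac{\sigma_M\nor{A}^2}{2}\nor{x-x_k}^2$ is exactly the surplus left after bounding $\tfrac12\norm{x_k-x}{T}^2\geq\tfrac{1}{2\tau_M}\nor{x_k-x}^2$ and retaining only the target coefficient $\tfrac{1-\tau_M\sigma_M\nor{A}^2}{2\tau_M}$. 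Assembling these bounds yields the claim.

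The main obstacle is bookkeeping rather than a single hard step: the Young constants must be matched so that every coupling contribution is absorbed with nothing to spare ($\sigma_M$ on the dual side, $\sigma_M\nor{A}^2$ on the primal side), and it is exactly this tightness that forces the degraded coefficients $\tfrac{1-\tau_M\sigma_M\nor{A}^2}{2\tau_M}$ and $\tfrac{\omega}{2\tau_M}$. Keeping the two boundary terms under control, and in particular ensuring the initial one vanishes through the initialization, is the only other place where care is needed.
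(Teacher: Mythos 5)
Your proposal is correct and follows essentially the same route as the paper's proof: the same decomposition $y_j-\tilde y_{j-1}=\Delta_j-\Delta_{j-1}$, the same Young estimates with constants $\sigma_M$ and $\sigma_M\nor{A}^2$, the use of $y_{-1}=y_0$ to kill the initial boundary term, and absorption of the final boundary term into the degraded coefficient $\tfrac{1-\tau_M\sigma_M\nor{A}^2}{2\tau_M}$. The only (immaterial) difference is organizational: the paper splits the coupling term into a telescoping pair plus a correction at each step before summing, whereas you sum first and apply Abel summation globally.
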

\begin{proof}
	We start from the inequality in \Cref{lem:onestep}, switching the index from $k$ to $j$. Recall that $\tilde y_j := 2 y_j - y_{j - 1}$, to get
	\begin{equation*}
		\begin{split}
			& V(z_{j + 1} - z) - V(z_j - z) + \frac{1-\tau_M L}{2\tau_M}\norm{x_{j+1}-x_j}{}^2+ \frac{1}{2}\norm{y_{j+1}-y_j}{\Sigma}^2 \\ & \hspace{6cm} +  \left[\LLd(x_{j+ 1}, y) - \LLd(x, y_{j + 1})\right] \\
			\leq \ \ & \varepsilon_{j+1}- \langle y_{j + 1} - \left(2y_j - y_{j - 1}\right), A\left(x - x_{j + 1}\right)\rangle \\
			= \ \ & \varepsilon_{j+1}- \langle y_{j+ 1} - y_j, A\left(x - x_{j + 1}\right)\rangle +\langle y_j-y_{j - 1}, A\left(x - x_{j + 1}\right)\rangle \\
			= \ \ & \varepsilon_{j+1}- \langle y_{j+ 1} - y_j, A\left(x - x_{j + 1}\right)\rangle +\langle y_j-y_{j - 1}, A\left(x - x_j\right)\rangle +\langle y_j-y_{j - 1}, A\left(x_j - x_{j + 1}\right)\rangle \enspace.
		\end{split}
	\end{equation*}
	Now focus on the term
	\begin{equation}
		\begin{split}
			\langle y_j-y_{j - 1}, A\left(x_j - x_{j + 1}\right)\rangle &=	\langle \Sigma^{\frac{1}{2}} \Sigma^{-\frac{1}{2}} \left(y_j-y_{j - 1}\right), A\left(x_j - x_{j + 1}\right)\rangle\\
			&=	\langle \Sigma^{-\frac{1}{2}}\left(y_j-y_{j - 1}\right), \Sigma^{\frac{1}{2}}A\left(x_j - x_{j + 1}\right)\rangle\\
			&\leq \nor{\Sigma^{-\frac{1}{2}}\left(y_j-y_{j - 1}\right)} \nor{\Sigma^{\frac{1}{2}}A\left(x_j - x_{j + 1}\right)}\\
			&\leq \frac{1}{2}\nor{\Sigma^{-\frac{1}{2}}\left(y_j-y_{j - 1}\right)}^2+\frac{1}{2} \nor{\Sigma^{\frac{1}{2}}A\left(x_j - x_{j + 1}\right)}^2\\
			&\leq \frac{1}{2}\norm{y_j-y_{j - 1}}{\Sigma}^2+\frac{\sigma_M\norm{A}{}^2}{2} \nor{ x_{j + 1}-x_j}^2 \enspace,
		\end{split}
	\end{equation}
	where we used Cauchy-Schwarz and Young inequalities.
	Then, using the definition of $\omega := 1 - \tau_M(L+\sigma_M\nor{A}^2)$, we have
	\begin{equation}\label{middleineq}
		\begin{split}
			& V(z_{j + 1} - z) - V(z_j - z) +  \left[\LL(x_{j+ 1}, y) - \LL(x, y_{j + 1})\right] \\
			& + \frac{\omega}{2\tau_M}\norm{x_{j+1}-x_j}{}^2+ \frac{1}{2}\norm{y_{j+1}-y_j}{\Sigma}^2 -\frac{1}{2} \norm{y_j-y_{j-1}}{\Sigma}^2\\
			\leq \ \ & \varepsilon_{j+1}- \langle y_{j + 1} - y_j, A\left(x - x_{j + 1}\right)\rangle +\langle y_j-y_{j- 1}, A\left(x - x_j\right)\rangle \enspace.
		\end{split}
	\end{equation}
	Imposing $y_{-1}=y_0$, summing-up \Cref{middleineq} from $j=0$ to $j=k-1$:
	\begin{equation*}
		\begin{split}
			& V(z_{k} - z) - V(z_0 - z) + \sum_{j=0}^{k-1}\left[\LLd(x_{j + 1}, y) - \LLd(x, y_{j + 1})\right] + \frac{\omega}{2\tau_M}\sum_{j=0}^{k-1}\nor{x_{j + 1} - x_j}^2  \\
			& \hspace{9cm}+ \frac{1}{2} \norm{y_k - y_{k - 1}}{\Sigma}^2 \\
    		& \leq  \sum_{j=0}^{k-1}\varepsilon_{j+1}- \langle y_{k} - y_{k-1}, A\left(x - x_{k}\right)\rangle\\
			& \leq  \frac{1}{2} \norm{y_{k} - y_{k-1}}{\Sigma}^2 +\frac{\sigma_M \nor{A}^2}{2} \nor{x_{k} - x}^2 +  \sum_{j=1}^{k}\varepsilon_{j} \enspace,
		\end{split}
	\end{equation*}
	where in the last inequality we used again Cauchy-Schwarz and Young inequalities as before. Reordering, we obtain the claim.
\end{proof}
\begin{lemma}[Second cumulative estimate]\label{lem:cum2}
	Let \Cref{ass:gen} hold.
	Let $(x_k,y_k)$ be the sequence generated by iterations \eqref{eq:algo} under \Cref{ass:TSigm}.
	Given $\xi>0$ and $\eta >0$, define $\theta := \xi - \tau_M(\xi L+\sigma_M \nor{A}^2)$ and $\rho:=\sigma_m(\eta-1)-\sigma_M\xi\eta$.
	Then, for any $z = (x, y)\in\X \times\Y$ and for any $k\in\N$,
	\begin{equation}
		\begin{split}
			& V(z_{k} - z)
			+ \frac{\theta}{2\tau_M\xi}\sum_{j=1}^{k}\nor{x_{j} - x_{j-1}}^2
			+\frac{\rho}{2\eta}\sum_{j=1}^{k}\nor{A x_{j}- Ax}^2
			+\sum_{j=1}^{k}\left[\LLd(x_{j }, y) - \LLd(x, y_{j})\right]\\
			\leq \ &  V(z_0 - z) + \sum_{j=1}^{k} \varepsilon_j + \frac{\sigma_m \left(\eta-1\right)k}{2}\nor{Ax-\bb}^2 \enspace.
		\end{split}
	\end{equation}
\end{lemma}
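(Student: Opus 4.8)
The plan is to rerun the argument of \Cref{lem:cum1} starting from the one-step estimate of \Cref{lem:onestep} (with the running index renamed $j$), but to handle the extrapolation term with $\xi$- and $\eta$-dependent weights so as to expose a feasibility quantity. As in \Cref{lem:cum1}, I first substitute $\tilde y_j = 2y_j - y_{j-1}$ and write
\[
\langle y_{j+1}-\tilde y_j, A(x-x_{j+1})\rangle = \langle y_{j+1}-y_j, A(x-x_{j+1})\rangle - \langle y_j-y_{j-1}, A(x-x_j)\rangle - \langle y_j-y_{j-1}, A(x_j-x_{j+1})\rangle,
\]
where the first two summands are built to telescope after summation and the last, genuinely bilinear, summand is the one to absorb.

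The first key step, which produces $\theta$, is to estimate the bilinear summand by a \emph{$\xi$-weighted} Young inequality rather than the balanced one used for \Cref{lem:cum1}:
\[
\langle y_j-y_{j-1}, A(x_j-x_{j+1})\rangle \le \tfrac{\xi}{2}\norm{y_j-y_{j-1}}{\Sigma}^2 + \tfrac{\sigma_M\nor{A}^2}{2\xi}\nor{x_{j+1}-x_j}^2 .
\]
Carried into the one-step estimate, this turns the coefficient $\tfrac{1-\tau_M L}{2\tau_M}$ of $\nor{x_{j+1}-x_j}^2$ into $\tfrac1{2\tau_M}-\tfrac L2-\tfrac{\sigma_M\nor{A}^2}{2\xi}=\tfrac{\theta}{2\tau_M\xi}$, exactly the target coefficient, at the price of leaving a dual term $\tfrac{\xi}{2}\norm{y_j-y_{j-1}}{\Sigma}^2$.

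The second key step, which produces the feasibility term and $\rho$, exploits the dual recursion $y_{j+1}-y_j=\Sigma(Ax_{j+1}-\bb)$. It gives $\tfrac12\norm{y_{j+1}-y_j}{\Sigma}^2\ge\tfrac{\sigma_m}{2}\nor{Ax_{j+1}-\bb}^2$ for the diagonal term and $\tfrac{\xi}{2}\norm{y_j-y_{j-1}}{\Sigma}^2\le\tfrac{\xi\sigma_M}{2}\nor{Ax_j-\bb}^2$ for the leftover term, where $\sigma_m$ and $\sigma_M$ (from \Cref{ass:TSigm}) enter asymmetrically. After summing over $j$ and realigning indices, the combined contribution at each index is controlled by $\tfrac{\sigma_m-\xi\sigma_M}{2}\nor{Ax_j-\bb}^2$; splitting $Ax_j-\bb=(Ax_j-Ax)+(Ax-\bb)$ and applying a second, $\eta$-weighted reverse Young inequality (weight $1/\eta$) converts this into a feasibility term $\tfrac{(\sigma_m-\xi\sigma_M)(\eta-1)}{2\eta}\nor{Ax_j-Ax}^2$ on the left and a residual $\tfrac{(\sigma_m-\xi\sigma_M)(\eta-1)}{2}\nor{Ax-\bb}^2$ on the right. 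Since $\tfrac{(\sigma_m-\xi\sigma_M)(\eta-1)}{2\eta}\ge\tfrac{\rho}{2\eta}$ and $\tfrac{(\sigma_m-\xi\sigma_M)(\eta-1)}{2}\le\tfrac{\sigma_m(\eta-1)}{2}$, this sharper estimate implies the looser one written with $\rho$ and $\sigma_m(\eta-1)/2$, the residual contributing its $k$-fold multiple after summation.

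It then remains to sum from $j=0$ to $k-1$, telescoping $V(z_{j+1}-z)-V(z_j-z)$ into $V(z_k-z)-V(z_0-z)$ and the two designed cross terms into the single boundary term $-\langle y_k-y_{k-1},A(x-x_k)\rangle$, the convention $y_{-1}=y_0$ killing all $j=0$ contributions (in particular the bilinear summand vanishes there). I expect the main obstacle to be precisely the joint treatment of this boundary term with the last diagonal dual term $\tfrac12\norm{y_k-y_{k-1}}{\Sigma}^2$: estimating $-\langle y_k-y_{k-1},A(x-x_k)\rangle\le\tfrac12\norm{y_k-y_{k-1}}{\Sigma}^2+\tfrac{\sigma_M}{2}\nor{Ax_k-Ax}^2$ by Young's inequality (using only $\Sigma\preceq\sigma_M\Id$, since \Cref{ass:omega} is \emph{not} assumed here) consumes the last diagonal term and accounts for the constraint violation at the final index, and reconciling this with the retention of the \emph{full} quantity $V(z_k-z)$ on the left is the delicate point. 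Once the boundary index is dealt with, what remains is the routine bookkeeping of propagating the constants through the two Young inequalities to arrive at the stated bound.
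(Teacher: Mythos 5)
Your overall strategy---rerun the one-step estimate of \Cref{lem:onestep}, extract $\theta$ via a $\xi$-weighted Young inequality on the extrapolation term, and extract $\rho$ via the dual update identity plus an $\eta$-weighted splitting of $\nor{Ax_j-\bb}^2$---is in the right spirit, but the decomposition you start from creates a boundary obstruction at index $k$ that you flag as ``the delicate point'' and do not resolve, and that I do not believe is resolvable along this route. Keeping the telescoping splitting of the cross term inherited from \Cref{lem:cum1}, after summation you are left with $-\langle y_k-y_{k-1},A(x-x_k)\rangle$ on the right. Estimating it by Young's inequality as you propose consumes the entire diagonal term $\tfrac12\norm{y_k-y_{k-1}}{\Sigma}^2$, which is precisely the term that must generate the feasibility contribution $\tfrac{\rho}{2\eta}\nor{Ax_k-Ax}^2$ at the last index, and additionally leaves $+\tfrac{\sigma_M}{2}\nor{Ax_k-Ax}^2$ on the right with nothing to absorb it. In \Cref{lem:cum1} this leftover is absorbed by degrading the coefficient of $\nor{x_k-x}^2$ inside $V(z_k-z)$, which is exactly what you cannot do here since the conclusion retains the full $V(z_k-z)$. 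The obstruction is real: writing $u=Ax_k-\bb$, one checks that $\tfrac12\norm{y_k-y_{k-1}}{\Sigma}^2+\langle y_k-y_{k-1},A(x-x_k)\rangle=-\tfrac12\langle\Sigma u,u\rangle+\langle\Sigma u,Ax-\bb\rangle$, which for large $\nor{u}$ behaves like $-\tfrac{\sigma_m}{2}\nor{u}^2$ and therefore cannot dominate the required $+\tfrac{\rho}{2\eta}\nor{Ax_k-Ax}^2\approx\tfrac{\rho}{2\eta}\nor{u}^2$ when $\rho>0$; the boundary inequality you would need is simply false.

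The paper's proof avoids the telescoping altogether: it first substitutes the dual-update identities $\tilde y_j=y_j+\Sigma(Ax_j-\bb)$ and $y_{j+1}-y_j=\Sigma(Ax_{j+1}-\bb)$ into the cross term, so that $\langle y_{j+1}-\tilde y_j,A(x-x_{j+1})\rangle=-\langle\Sigma A(x_{j+1}-x_j),Ax_{j+1}-Ax\rangle$ per index, with no boundary term at all. The $\xi$-Young inequality is then applied to this quantity, pairing $\nor{x_{j+1}-x_j}$ with $\nor{Ax_{j+1}-Ax}$ at the \emph{same} index, so its cost $\tfrac{\xi\sigma_M}{2}\nor{Ax_{j+1}-Ax}^2$ is absorbed by the $\tfrac{\sigma_m}{2}\nor{Ax_{j+1}-\bb}^2$ produced by the diagonal dual term of the same step; the $\eta$-Young inequality is applied to the cross term $\sigma_m\langle Ax_{j+1}-Ax,Ax-\bb\rangle$ arising from expanding $\nor{Ax_{j+1}-\bb}^2$. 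Every index is treated identically and the full $V(z_k-z)$ survives. (A secondary point: your $\eta$-step multiplies a reverse Young inequality by $(\sigma_m-\xi\sigma_M)/2$ and so requires $\sigma_m\ge\xi\sigma_M$ and $\eta\ge1$, which hold under \Cref{ass:thetarho} but not under the bare hypotheses of the lemma; the paper's per-index Young estimates need no such sign conditions.) If you realign your two Young costs so that both land at index $j+1$ rather than one of them at index $j$, you recover the paper's argument.
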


\begin{proof}
	In a similar fashion as in the previous proof, we start again from the main inequality in \Cref{lem:onestep}, switching the index from $k$ to $j$.
	Since $\tilde y_j = y_j + (y_j - y_{j - 1}) = y_j + \Sigma (A x_j - b^\delta)$ and $y_{j+1} - y_j = \Sigma (A x_{j+1} - b^\delta)$, we get
	\begin{equation*}
		\begin{split}
			& V(z_{j + 1} -z) - V(z_j - z) + \frac{1-\tau_M L}{2 \tau_M} \nor{x_{j+1} - x_j}^2 + \frac{1}{2}\norm{\Sigma \left(A x_{j+1} - \bb\right)}{\Sigma}^2 \\
			& \hspace{10cm} +  \left[\LLd(x_{j+ 1}, x) - \LLd(x, y_{j + 1})\right]\\
			\leq \ & \varepsilon_{j+1}+\langle  y_{j+1} - y_j - \Sigma \left(A x_{j} - \bb\right) , A x_{j+1} - A x \rangle\\
			= \ & \varepsilon_{j+1}+\langle \Sigma A\left( x_{j+1} - x_j\right), A x_{j+1} - Ax\rangle \enspace.
		\end{split}
	\end{equation*}
	Now estimate
	\begin{equation*}
		\begin{split}
			\frac{1}{2}\norm{\Sigma \left(A x_{j+1} - \bb\right)}{\Sigma}^2
			& =\frac{1}{2} \langle \Sigma \left(A x_{j+1} - \bb\right), A x_{j+1} - \bb\rangle\\
			& \geq \frac{\sigma_m}{2} \norm{A x_{j+1} - \bb}{}^2\\
			& = \frac{\sigma_m}{2}\nor{A x_{j+1} - Ax}^2 + \frac{\sigma_m}{2}\nor{Ax-\bb}^2  + \sigma_m \langle A x_{j+1} - Ax, Ax-\bb\rangle \enspace .
		\end{split}
	\end{equation*}
	So,
	\begin{equation*}
		\begin{split}
			& V(z_{j + 1} - z) - V(z_j - z) + \frac{1-\tau_M L}{2\tau_M} \nor{x_{j+1} - x_j}^2 + \frac{\sigma_m}{2}\nor{A x_{j+1} - Ax}^2  \\
			& \hspace{10cm} + \left[\LLd(x_{j+1},y) - \LLd (x,y_{j+1})\right]\\
			\leq \ & \varepsilon_{j+1}+\langle \Sigma A\left( x_{j+1} - x_j\right), A x_{j+1} - Ax\rangle + \sigma_m \langle A x_{j+1} - Ax, \bb-Ax \rangle - \frac{\sigma_m}{2}\nor{Ax-\bb}^2\\
			\leq \ & \varepsilon_{j+1}+ \frac{\sigma_M\nor{A}^2}{2\xi}\nor{x_{j+1} - x_j}^2 +\frac{\xi\sigma_M}{2}\nor{A x_{j+1}- Ax}^2 - \frac{\sigma_m}{2}\nor{Ax - b^\delta}^2\\
			& +\frac{\sigma_m}{2\eta}\nor{A x_{j+1}- Ax}^2 + \frac{\sigma_m \eta}{2}\nor{Ax-\bb}^2 \enspace.
		\end{split}
	\end{equation*}
	In the last inequality we used three times Cauchy-Schwarz inequality and twice Young inequality with parameters $\xi>0$ and $\eta >0$.
	Then, reordering and recalling the definitions of $\theta := \xi - \tau_M(\xi L+\sigma_M \nor{A}^2)$,  we obtain
	\begin{equation*}
		\begin{split}
			& V(z_{j + 1} - z) - V(z_j - z)  + \frac{\theta}{2\tau_M \xi}\nor{x_{j+1} - x_j}^2 + \frac{\sigma_m(\eta-1)-\sigma_M\xi\eta}{2\eta}\nor{A x_{j+1}- Ax}^2 \\
		+  & \left[\LLd(x_{j+1},y) - \LLd\left(x,y_{j+1}\right)\right]	\quad \leq \quad \varepsilon_{j+1}+\frac{\sigma_m \left(\eta-1\right)}{2}\nor{Ax-\bb}^2 \enspace.
		\end{split}
	\end{equation*}
	Summing-up the latter from $j=0$ to $j=k-1$, we get
	\begin{equation*}
		\begin{split}
			& V(z_{k} - z) - V(z_0 - z) +\frac{\theta}{2\tau_M\xi}\sum_{j=0}^{k-1}\nor{x_{j + 1} - x_j}^2 +\frac{\sigma_m(\eta-1)-\sigma_M\xi\eta}{2\eta}\sum_{j=0}^{k-1}\nor{A x_{j+1}- Ax}^2\\
			& + \sum_{j=0}^{k-1}\left[\LLd(x_{j + 1}, y) - \LLd(x, y_{j + 1})\right] \ \ \leq \ \ \sum_{j=0}^{k-1}\varepsilon_{j+1}+\frac{\sigma_m \left(\eta-1\right)k}{2}\nor{Ax-\bb}^2 \enspace.
		\end{split}
	\end{equation*}
	By trivial manipulations, we get the claim.
\end{proof}

\section{Proofs of main results}

\subsection{Proof of \Cref{prop:convergence_cp}}
\label{app:condat_pf}

\condatconvergence*
\begin{proof}
Up to a change of initialization and offset of index, the steps of algorithm \eqref{eq:algo} when $\varepsilon_k = 0$ correspond to
\begin{align}\label{orginal}
    \begin{cases}
	y_{k + 1} = y_k + \Sigma \left(Ax_{k} - \bbs\right)\\
    x_{k+1} = \prox^{T}_R(x_k - \T\nabla F(x_k) - \T A^*(2 y_{k+1} - y_{k})) \enspace.
	\end{cases}
\end{align}
We now show that the previous iterations correspond to Algorithm 3.2 in \cite{Condat13}, setting $\sigma=\tau=1$ and applying it in the metrics defined by the preconditioning operators; namely, in the primal and dual spaces $(\X, \ \langle T^{-1} \cdot,  \cdot \rangle)$ and $(\Y, \ \langle \Sigma\cdot,  \cdot \rangle)$ - respectively.
Comparing problem \eqref{primal} with (1) in \cite{Condat13}, their notation in our setting reads as $F=F,\ G=R,\ H=\iota_{\left\{\bbs\right\}}$ and $K=A$.
The Fenchel conjugate of $H$ in $(\Y, \ \langle \Sigma\cdot,  \cdot \rangle)$ is
\begin{equation}
    \begin{split}
        H^{\star}(y)&=\sup_{z\in\Y} \left\{\langle \Sigma z, y \rangle-\iota_{\left\{\bbs\right\}}(z)\right\}=\langle \Sigma \bbs, y \rangle
    \end{split}
\end{equation}
and its proximal-point operator, again in $(\Y, \ \langle \Sigma\cdot,  \cdot \rangle)$, is
\begin{equation}
    \begin{split}
        \prox_{H^{\star}}(y)&=\argmin_{z\in\Y} \left\{\langle \Sigma \bbs, z \rangle+\frac{1}{2}\langle \Sigma(z-y),z-y\rangle \right\}=y-\bbs \enspace.
    \end{split}
\end{equation}
The gradient of $F$ in $(\X, \ \langle \T^{-1}\cdot,  \cdot \rangle)$ is denoted by $\nabla_{T} F(x)$ and satisfies, for $x$ and $v$ in $\X$,
\begin{equation*}
    \langle T^{-1}\nabla_{T} F(x), v \rangle=\langle \nabla F(x), v \rangle \enspace .
\end{equation*}
It is easy to see that one has $\nabla_T F(x)=T \nabla F(x)$.\\
The adjoint operator of $K: \ (\X, \ \langle \T^{-1}\cdot,  \cdot \rangle)\to (\Y, \ \langle \Sigma\cdot,  \cdot \rangle)$ satisfies, for every $(x,y)\in\X \times \Y$,
\begin{equation}
    \begin{split}
        \langle T^{-1}K^*y,x \rangle&=\langle \Sigma Kx, y \rangle=\langle \Sigma Ax, y \rangle = \langle x, A^*\Sigma y \rangle \enspace,
    \end{split}
\end{equation}
implying that $T^{-1}K^*=A^*\Sigma$ and so that $K^*=TA^*\Sigma$.
Then Algorithm 3.2 in \cite{Condat13} (with $\sigma=\tau=1$, $\rho_k=1$ for every $k\in\N$ and no errors involved) is:
\begin{align*}
    \begin{cases}
	\bar{y}_{k + 1} = \prox_{H^{\star}}(\bar{y}_k+K\bar{x}_k)\\
    \bar{x}_{k+1} = \prox_R(\bar{x}_k - \nabla_T F(\bar{x}_k) - K^*(2 \bar{y}_{k+1} - \bar{y}_{k})) \enspace,
	\end{cases}
\end{align*}
and becomes, applied to our setting in the spaces $(\X, \ \langle T^{-1} \cdot,  \cdot \rangle)$ and $(\Y, \ \langle \Sigma\cdot,  \cdot \rangle)$,
\begin{align*}
    \begin{cases}
	\bar{y}_{k + 1} =\bar{y}_k + A\bar{x}_k-\bbs\\
    \bar{x}_{k+1} = \argmin_{x\in\X} \left\{R(x)+\frac{1}{2}\norm{x-\left[\bar{x}_k - T\nabla F(\bar{x}_k) - TA^*\Sigma(2 \bar{y}_{k+1} - \bar{y}_{k})\right]}{T^{-1}}^2\right\} \enspace.
	\end{cases}
\end{align*}
Define the variable $\bar{z}_k=\Sigma \bar{y}_k$ and multiply the first line by $\Sigma$. Then,
\begin{align*}
    \begin{cases}
	\bar{z}_{k + 1} =\bar{z}_k + \Sigma\left(A\bar{x}_k-\bbs\right)\\
    \bar{x}_{k+1} = \prox_R^T\left(\bar{x}_k - T\nabla F(\bar{x}_k) - TA^*(2 \bar{z}_{k+1} - \bar{z}_{k})\right) \enspace.
	\end{cases}
\end{align*}
Comparing the previous with \eqref{orginal}, we get that they are indeed the same algorithm.
To conclude, we want to use Theorem 3.1 in \cite{Condat13}, that ensures the weak convergence of the sequence generated by the algorithm to a saddle-point.
It remains to check that, under our assumptions, the hypothesis of the above result are indeed satisfied; namely, that
\begin{equation}\label{cond}
    1-\norm{K}{}^2-\frac{L_T}{2} \geq 0 \enspace,
\end{equation}
where $\norm{K}{}$ represents the operator norm of $K: \ (\X, \ \langle \T^{-1}\cdot,  \cdot \rangle)\to (\Y, \ \langle \Sigma\cdot,  \cdot \rangle)$ and $L_T$ is the Lipschitz constant of $\nabla_T F$.
Notice that
$$\norm{K}{}^2=\sup_{x\in\X} \frac{\langle \Sigma Ax,Ax\rangle }{\langle T^{-1}x,x\rangle}\leq \sigma_M\tau_M\norm{A}{}^2 \enspace.$$
Moreover, $L_T\leq \tau_M L$.
Indeed, for every $x$ and $x'\in\X$,
$$\norm{\nabla_T F(x')-\nabla_T F(x)}{}=\norm{T \nabla F(x')-T \nabla F(x)}{}\leq \tau_M \norm{ \nabla F(x')- \nabla F(x)}{} \enspace.$$
Then, by \Cref{ass:omega} and the previous considerations,
$$0\leq 1-\tau_M(L + \sigma_M\norm{A}{}^2)\leq 1-L_T-\norm{K}{}^2\leq 1-\frac{L_T}{2}-\norm{K}{}^2 \enspace.$$
In particular, \eqref{cond} is satisfied and we the claim is proved.
\end{proof}
\subsection{Proof of \Cref{prop:feas_lagrangian}}
\label{app:pf_feaslagrangian}

\feaslagrangian*

\begin{proof} %
For simplicity, denote $J:=R+F$. First notice that, for our problem, the Lagrangian gap is equal to the Bregman divergence.
	Indeed, using $-A^* \ys \in \partial J(\xs)$ and $A \xs = \bbs$:
	\begin{align}\label{eq:breg_is_gap}
    	\LLs(x, \ys) - \LLs(\xs, y)
    	&= J(x) - J(\xs) + \langle \ys, A x - \bbs \rangle - \langle y, A  \xs - \bbs \rangle  \nonumber \\
    	&= J(x) - J(\xs) + \langle A^* \ys, x - \xs \rangle = D_J^{-A^* \ys}(x, \xs)\enspace,
	\end{align}
	We then show that if $v \in \partial J(\xs)$ and $D_{J}^{v}(x, \xs) = 0$, then $v \in \partial J(x)$.
	Indeed, $J(x) - J(\xs) - \langle v, x - \xs \rangle = 0$ and so, for all $x' \in \X$,
	\begin{equation}
	J(x') \geq J(\xs) + \langle v, x' - \xs \rangle = J(x) - \langle v, x - \xs \rangle + \langle v, x' - \xs \rangle = J(x) + \langle v, x' - x \rangle \enspace .
	\end{equation}
	\Cref{prop:feas_lagrangian} follows by taking $v = -A^* \bbs$.
\end{proof}

\subsection{Proof of \Cref{prop:early_stop}}\label{app:pf_earlystop}
\earlystop*
\begin{proof}
    Recall that we denote $z = (x, y) \in \X \times \Y$ a primal-dual pair, and define
    \begin{equation}
    	V(z) := \frac{1}{2}\norm{x}{T}^2+\frac{1}{2}\norm{y}{\Sigma}^2 \enspace.
    \end{equation}
	Use \Cref{lem:cum1} at $x=\xs$ and $y=\ys$, to get
		\begin{equation}\label{main}
		\begin{split}
			& \tfrac{1-\tau_M\sigma_M\norm{A}{}^2}{2\tau_M}\nor{x_{k}
			  - \xs}^2
			  + \tfrac{1}{2}\norm{y_{k} - \ys}{\Sigma}^2
			  +  \sum_{j=1}^{k}  [\LL^\delta(x_j, \ys) - \LL^\delta(\xs, y_j)]
			+\tfrac{\omega}{2\tau_M}\sum_{j=1}^{k}\nor{x_j - x_{j - 1}}^2  \\
			& \leq  V(z_0 - \zs)+\sum_{j=1}^{k}\varepsilon_{j} \enspace.
		\end{split}
	\end{equation}
Notice that
	\begin{equation}\label{notice}
		\LL^\delta(x_{j}, \ys) - \LL^\delta(\xs, y_{j})
		    = \LLs(x_{j}, \ys) - \LLs(\xs, y_{j}) + \langle y_{j} - \ys, \bb-\bbs \rangle \enspace .
	\end{equation}
	Then,
	\begin{equation}\label{here}
		\begin{split}
			& \tfrac{1-\tau_M\sigma_M\norm{A}{}^2}{2\tau_M}\nor{x_{k} - \xs}^2 + \frac{1}{2}\norm{y_{k} - \ys}{\Sigma}^2 +  \sum_{j=1}^{k}\left[\LLs(x_j,\ys) - \LLs(\xs, y_j)\right]
			+\tfrac{\omega}{2\tau_M}\sum_{j=1}^{k}\nor{x_j - x_{j - 1}}^2
			\\& \leq   V(z_0 - \zs)+\sum_{j=1}^{k}\varepsilon_{j}+ \delta \sum_{j=1}^{k} \nor{y_{j} - \ys}.
		\end{split}
	\end{equation}
	Recall that $\LLs(x,\ys) - \LLs(\xs, y)\geq 0 $ for every $\left(x,y\right)\in\X\times\Y$.
	Moreover, $\omega\geq 0$ by \Cref{ass:omega} and so  $1-\tau_M\sigma_M\norm{A}{}^2 \geq 0$.
	Then, for every $j\in\N$, we have that
	\begin{equation}
		\norm{y_j - \ys}{\Sigma}^2  \ \leq \  2 V(z_0 - \zs) + 2\sum_{i=1}^{j}\varepsilon_{i} + 2\delta \sum_{i=1}^{j}\nor{y_i - \ys}
	\end{equation}
	and so
	\begin{equation}\label{eq:bound_norm_y}
		\norm{y_j - \ys}{}^2  \ \leq \  2\sigma_M \left[V(z_0 - \zs) +\sum_{i=1}^{j}\varepsilon_{i}\right] + 2\delta\sigma_M \sum_{i=1}^{j}\nor{y_i - \ys} \enspace.
	\end{equation}
	Apply \Cref{lem:u_n_upper_bound} to \Cref{eq:bound_norm_y} with $u_j=\nor{y_j - \ys}$, $S_j=2 \sigma_M \left[V(z_0 - \zs)+\sum_{i=1}^{j}\varepsilon_{i} \right]$ and $\lambda=2\delta\sigma_M$.
	We get, for $1\leq j\leq k$,
	\begin{equation}\label{questa}
    \begin{split}
		\nor{y_j - \ys}   & \leq \delta \sigma_M j + \sqrt{2\sigma_M \left[V(z_0 - \zs)+\sum_{i=1}^{j}\varepsilon_{i} \right]+\left(\delta\sigma_M j\right)^2}\\
		&\leq 2\delta \sigma_M k + \sqrt{2\sigma_M \left[V(z_0 - \zs)+\sum_{i=1}^{k}\varepsilon_{i} \right]} \enspace.
	\end{split}
	\end{equation}
	Insert the latter in \Cref{here}, to obtain
	\begin{equation*}
		\begin{split}
			&\sum_{j=1}^{k}\left[\LLs(x_j,\ys) - \LLs(\xs, y_j)\right]  \\
			& \quad \quad \leq  V(z_0 - \zs)+\sum_{j=1}^{k}\varepsilon_{j} + \delta \sum_{j=1}^{k} \left(2\delta\sigma_M k + \sqrt{2\sigma_M \left[V(z_0 - \zs)+\sum_{i=1}^{k}\varepsilon_{i} \right]}\right)\\
			& \quad \quad =  V(z_0 - \zs)  + \sum_{j=1}^{k}\varepsilon_{j}+\delta k \sqrt{2\sigma_M \left[V(z_0 - \zs)+\sum_{i=1}^{k}\varepsilon_{i} \right]} + 2 \delta^2 \sigma_M k^2 \\
			& \quad \quad \leq  V(z_0 - \zs) + C_0 k \delta + \delta k \left( \sqrt{2\sigma_M V(z_0 - \zs)} + \sqrt{2\sigma_M C_0 k \delta} \right)  + 2 \delta^2 \sigma_M k^2 \enspace,
		\end{split}
	\end{equation*}
	where the last line uses $\sqrt{a + b} \leq \sqrt{a} + \sqrt{b}$.
	By Jensen's inequality, we get the first claim.\\
	For the second result, apply \Cref{lem:cum2} at $x=\xs$ and $y=\ys$:
		\begin{equation}
		\begin{split}
			& V(z_{k} - \zs)
			+ \frac{\theta}{2\tau_M\xi} \sum_{j=1}^{k}\nor{x_{j} - x_{j-1}}^2
			+ \frac{\rho}{2\eta}\sum_{j=1}^{k}\nor{A x_{j} - A\xs}^2µ
			+ \sum_{j=1}^{k}\left[\LL^\delta(x_{j }, \ys) - \LL^\delta(\xs, y_{j})\right]\\
			& \leq  V(z_0 - \zs)+\sum_{j=1}^{k} \varepsilon_j + \frac{\sigma_m \left(\eta-1\right)k}{2}\nor{A\xs-\bb}^2 \enspace.
		\end{split}
	\end{equation}
	Using \Cref{notice,questa}, we have
	\begin{equation}
		\begin{split}
			& V(z_{k} - \zs) +\frac{\theta}{2\tau_M\xi}\sum_{j=1}^{k}\nor{x_{j} - x_{j-1}}^2 +\frac{\rho}{2\eta}\sum_{j=1}^{k}\nor{A x_{j}- \bbs}^2+\sum_{j=1}^{k}\left[\LLs(x_{j }, \ys) - \LLs(\xs, y_{j})\right]\\
			\leq \ & V(z_0 - \zs)+\sum_{j=1}^{k} \varepsilon_j +\sum_{j=1}^{k}\langle y_j-\ys,\bbs-\bb \rangle+ \frac{\sigma_m \left(\eta-1\right)k}{2}\nor{\bbs-\bb}^2\\
			\leq \ & V(z_0 - \zs)+\sum_{j=1}^{k} \varepsilon_j +\delta\sum_{j=1}^{k}\nor{ y_j-\ys}+ \frac{\sigma_m \left(\eta-1\right)k}{2}\delta^2\\
			\leq \ & V(z_0 - \zs)+\sum_{j=1}^{k} \varepsilon_j+2\sigma_M\delta^2 k^2 + \delta k \sqrt{2\sigma_M \left[V(z_0 - \zs)+\sum_{i=1}^{k}\varepsilon_{i} \right]}+\frac{\sigma_m \left(\eta-1\right)k}{2}\delta^2 \enspace.
		\end{split}
	\end{equation}
Recall that $\theta\geq 0$ and that $\LLs(x,\ys) - \LLs(\xs, y)\geq 0 $ for every $\left(x,y\right)\in\X\times\Y$.
By Jensen's inequality, rearranging the terms and using $\sum_{i=1}^k \varepsilon_i \leq C_0 k \delta $, we get the claim.
The exact values of the constants of \Cref{prop:early_stop} are therefore:
\begin{equation}\label{constants}
\begin{split}
    C_1 &= V(z_0 - \zs)  \enspace, \\
    C_2 &= C_0 + \sqrt{2\sigma_M V(z_0 - \zs)} \enspace, \\
    C_3 &= \sqrt{2 \sigma_M C_0} \enspace, \\
    C_4 &= 2 \sigma_M \enspace, \\
    C_5 &= \frac{2\eta}{\rho} C_1 \enspace, \\
    C_6 &= \frac{2\eta}{\rho} C_2 \enspace, \\
    C_7 &= \frac{2\eta}{\rho} C_3 \enspace, \\
    C_8 &= \frac{2\eta}{\rho} C_4 \enspace, \\
    C_9 &= \frac{\eta \sigma_m(\eta - 1)}{\rho} \enspace, \\
\end{split}
\end{equation}
\
\end{proof}

\subsection{Example of divergence in absence of noisy solution (see \Cref{sub:divergence})}\label{app:divergence}

We present an example in which the primal exact problem has solution, but the noisy one does not and the averaged primal iterates generated by Algorithm \eqref{eq:algo} indeed diverge.
First note that, if the function $R$ has bounded domain, the primal iterates remain bounded. So, to exhibit a case of divergence of the primal iterates, we consider a function $R$ with full domain: set $R (\cdot)= \frac{1}{2} \nor{\cdot}^2$ (and $F=0$). The exact problem is then
\begin{equation}\label{exact_sample}
    \min_{x \in \mathcal{X}}\  \frac{1}{2} \nor{x}^2   \quad \text{s.t.} \quad Ax = b^{\star} \enspace.
\end{equation}
 Now consider a noisy datum $b^{\delta}$ such that $Ax=b^\delta$ does not have a solution. If the associated normal equation, namely $A^*Ax=A^*b^{\delta}$ is feasible, in \Cref{sec:unfeas} we prove not only boundedness of the iterates but also convergence to a normal solution. On the contrary,  to get divergence of the iterates, here we consider a classic scenario in which the perturbation of the exact data generates an unfeasible constraint, even for the associated normal equation. We recall that this may happen only in the infinite dimensional setting, as when $R(A)$ is finite dimensional, it is also closed and a solution to the normal equation always exists. %
As a prototype of ill-posed problem, let  $\mathcal{X}=\mathcal{Y}=\ell^2$ and $A$ be defined by, for every $x\in\ell^2$ and for every $i\in\N$,
\begin{equation*}
    (Ax)^i = a^ix^i \enspace,
\end{equation*}
where, for every $i\in\N$, $a^i\in (0,M)$ for a fixed constant $M>0$ and $\inf_{i\in\N} a_i =0$. Note that $A\colon \ell^2\to\ell^2$ is well-defined, linear, continuous, self-adjoint and compact.
Let $b^\star$ in the range of $A$ and denote by $x^\star$ the unique solution to $\mathcal{P}^{\star}$ defined in \Cref{exact_sample}; namely, $(x^\star)^i:=
    (b^\star)^i/a^i$ for every $i\in\N$. In particular, the $(b^\star)^i$ are such that $x^\star$ belongs to $\ell^2$. Let also $b^{\delta}\in\ell^2$ with $\|b^\delta-b^\star\|\leq \delta$, but such that the noisy equation does not have a normal solution. Defining, for every $i\in\N$,
\begin{equation}\label{xbar}
(x^\delta)^i:=
    (b^\delta)^i/a^i,
\end{equation}
the previous means that $x^\delta$ does not belong to $\ell^2$. For an explicit example, consider $a^i=1/i, (b^\star)^i=1/i^2$ and $(b^\delta)^i=(b^\star)^i+C/i$, with $C={\delta/}{ \sqrt{\sum_{j=1}^{+\infty} 1/j^2}}$.\\
Apply the algorithm with step-sizes $\sigma>0$ and $\tau>0$ such that $\sigma \tau < 1/\norm{A}{}^2$ and notice that it implies, for every $i\in\N$, $\sigma \tau < 1/(a^i)^2$.
As $a^i>0$ for every $i\in\N$, the coordinates of the averaged sequence $(\hat{x}_k^i)$ are convergent to a solution of the following (one-dimensional) optimization problem:
\begin{align*}
    \mathcal{P}^i & := \argmin_{x^i\in\R} \left\{ \frac{1}{2}(x^i)^2: \ \ a^ix^i=(b^\delta)^i\right\} = \left\{ \frac{(b^\delta)^i}{a^i} \right\}. %
\end{align*}
Hence, for the primal-dual algorithm, if $x^{\delta}\notin\ell^2$, then $(\hat{x}_k)$ diverges. Indeed, by contradiction, suppose that $(\hat{x}_k)$ is bounded. As it is bounded and converges coordinate-wise to $x^{\delta}$, then it weakly converges to $x^{\delta}$. But this is not possible since $x^{\delta}$ is not in $\ell^2$.\\
Note that the problem considered in this example can be treated by Landweber method and it is well-known that also the iterates generated by this method, while being different from the ones of primal-dual algorithm, diverge.

\section{Sparse recovery}
\subsection{Proof of \cref{prop:extended_supp}}\label{app:sub:sparse_proof}

\extendedsupp*
\begin{proof}
Recall that $\xs, \ys$ is a primal-dual solution, hence $-A^* \ys \in \partial \nor{\xs}_1$.
For every $i \in \N$ we have that $\left[\partial \norm{\cdot}{1}\right]_i(\xs)  \subseteq \left[-1,1\right]$ and so  $\abs{\left(A^*\ys\right)_i}\leq 1$.
Recall that $\Gamma_C:=\N\setminus \Gamma$.
As $A^*\ys$ belongs to $\X=\ell^2(\N; \R)$, we have
\begin{equation}\label{eq:finite_supp}
	\sum_{i\in\N}^{} \abs{\left(A^*\ys\right)_i}^2<+\infty.
\end{equation}
Indeed, $m\leq 1$ by definition and from \Cref{eq:finite_supp} the coefficients $\abs{\left(A^*\ys\right)_i}$ converge to $0$ (and so they can not accumulate at $1$).
We have also that
\begin{equation*}
	\begin{split}
		D^{-A^*\ys}(x,\xs) & =\sum_{i\in\N}\left[\abs{x_i}-\abs{\xs_i}+\left(A^*\ys\right)_i\left(x_i-\xs_i\right)\right]\\ & =\sum_{i\in\N}\left[\abs{x_i}+\left(A^*\ys\right)_i x_i\right]\\
		& \geq \sum_{i\in\Gamma}\left[\abs{x_i}-\underbrace{\abs{\left(A^*\ys\right)_i}}_{=1}\abs{x_i}\right]+\sum_{i\in\Gamma_C}\left[\abs{x_i}-\underbrace{\abs{\left(A^*\ys\right)_i}}_{\leq m} \abs{x_i} \right]\\
		&\geq (1-m) \sum_{i\in\Gamma_C}^{} \abs{x_i}.
	\end{split}
\end{equation*}
\end{proof}

\subsection{Tykhonov regularization: Lasso}
\label{app:sub:tycho_sparse}
For Tykhonov regularisation, the results in terms of Bregman divergence and feasibility are the following.
\begin{lemma}[\cite{grasmair2011necessary}, Lemma 3.5]\label{eq:lasso}
	Let $A\xs=\bbs$, $-A^*\ys \in \partial \nor{\cdot}_1(\xs)$ and, for $\alpha>0$,
	\begin{equation}\label{TykProbl}
		x_\alpha \in \argmin_{x\in\X} \left\{\nor{Ax - b^\delta}^2 + \alpha \nor{x}_1 \right\}.
	\end{equation}
Then it holds that
\begin{equation*}
  \norm{Ax_{\alpha}-\bbs}{} \leq \delta +\alpha \norm{\ys}{}
\quad \quad \text{and} \quad \quad
  D^{-A^*\ys}(x_{\alpha},\xs)\leq \frac{\left(\delta+\alpha\norm{\ys}{}/2\right)^2}{\alpha}.
\end{equation*}
\end{lemma}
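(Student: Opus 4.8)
The plan is to run the classical variational (source-condition) argument for Tikhonov regularization and to squeeze both estimates out of a single completed square.

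First I would exploit the optimality of $x_\alpha$ by testing its objective against $\xs$: since $x_\alpha$ minimizes $\nor{Ax-\bb}^2+\alpha\nor{x}_1$, one has $\nor{Ax_\alpha-\bb}^2+\alpha\nor{x_\alpha}_1\le \nor{A\xs-\bb}^2+\alpha\nor{\xs}_1$. Because $A\xs=\bbs$ and $\nor{\bbs-\bb}\le\delta$, the first right-hand term is at most $\delta^2$, giving $\nor{Ax_\alpha-\bb}^2+\alpha(\nor{x_\alpha}_1-\nor{\xs}_1)\le \delta^2$.

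Next I would convert the penalty gap into a Bregman divergence using the source condition $-A^*\ys\in\partial\nor{\cdot}_1(\xs)$. By definition of the divergence, $\nor{x_\alpha}_1-\nor{\xs}_1 = D^{-A^*\ys}(x_\alpha,\xs)-\langle\ys,Ax_\alpha-\bbs\rangle$, where I used $\langle A^*\ys,x_\alpha-\xs\rangle=\langle\ys,Ax_\alpha-\bbs\rangle$ together with $A\xs=\bbs$. Writing $r:=Ax_\alpha-\bb$ so that $Ax_\alpha-\bbs=r+(\bb-\bbs)$, abbreviating $D:=D^{-A^*\ys}(x_\alpha,\xs)\ge 0$, and bounding the cross terms by Cauchy--Schwarz with $\nor{\bb-\bbs}\le\delta$, I obtain the single key inequality $\nor{r}^2+\alpha D\le \delta^2+\alpha\nor{\ys}\,\nor{r}+\alpha\nor{\ys}\,\delta$. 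Completing the square in $\nor{r}$ rewrites this as $(\nor{r}-\tfrac{\alpha\nor{\ys}}{2})^2+\alpha D\le (\delta+\tfrac{\alpha\nor{\ys}}{2})^2$. Both left-hand summands are non-negative, so each conclusion drops out independently: discarding the square term gives $\alpha D\le (\delta+\alpha\nor{\ys}/2)^2$, i.e. the Bregman estimate $D^{-A^*\ys}(x_\alpha,\xs)\le (\delta+\alpha\nor{\ys}/2)^2/\alpha$; discarding $\alpha D$ and taking square roots gives the discrepancy estimate $\nor{r}\le \delta+\alpha\nor{\ys}$.

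The step I expect to be most delicate is the handling of the noise in the discrepancy bound. Completing the square as above naturally controls the residual against the \emph{noisy} datum, $\nor{Ax_\alpha-\bb}\le\delta+\alpha\nor{\ys}$, whereas the statement is phrased with $\bbs$; the two are linked by $\nor{Ax_\alpha-\bbs}\le \nor{Ax_\alpha-\bb}+\delta$, and one must track the noise cross-term $\langle\ys,\bb-\bbs\rangle$ precisely — bounding it by $\nor{\ys}\delta$ at exactly the right moment — so that the constants fold into the perfect square $(\delta+\alpha\nor{\ys}/2)^2$ and the advertised bounds emerge without superfluous factors. This is also the only place where the signs of the inner products, controlled via Cauchy--Schwarz, could be lost.
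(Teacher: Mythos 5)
The paper does not actually prove this lemma: it is quoted verbatim from \cite{grasmair2011necessary}, so there is no internal proof to compare against and I am judging your argument on its own merits. Your derivation of the Bregman estimate is correct and complete: minimality of $x_\alpha$ tested against $\xs$ gives $\nor{Ax_\alpha-\bb}^2+\alpha(\nor{x_\alpha}_1-\nor{\xs}_1)\le\delta^2$, the identity $\nor{x_\alpha}_1-\nor{\xs}_1=D-\langle\ys,Ax_\alpha-\bbs\rangle$ is right, the cross terms are bounded correctly, and the completed square $(\nor{r}-\alpha\nor{\ys}/2)^2+\alpha D\le(\delta+\alpha\nor{\ys}/2)^2$ with $D\ge 0$ yields exactly $D^{-A^*\ys}(x_\alpha,\xs)\le(\delta+\alpha\nor{\ys}/2)^2/\alpha$.

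The gap is in the first inequality, and it is precisely the point you flagged without closing it. Your completed square controls the \emph{noisy} residual, $\nor{Ax_\alpha-\bb}\le\delta+\alpha\nor{\ys}$, and the triangle inequality then only gives $\nor{Ax_\alpha-\bbs}\le 2\delta+\alpha\nor{\ys}$, which is weaker than the stated bound. This loss is not mere bookkeeping: if you rerun the same variational argument directly in $s:=Ax_\alpha-\bbs$, writing $\nor{r}^2=\nor{s}^2-2\langle s,\bb-\bbs\rangle+\nor{\bb-\bbs}^2$ and completing the square in $\nor{s}$, the extra cross term $2\delta\nor{s}$ destroys the perfect square and produces an even worse constant, so ``tracking the cross-term precisely'' cannot rescue the stated constant along this route. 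To get $\delta+\alpha\nor{\ys}$ for the exact residual you need a different ingredient: the first-order optimality condition $0\in 2A^*(Ax_\alpha-\bb)+\alpha\,\partial\nor{x_\alpha}_1$ together with monotonicity of $\partial\nor{\cdot}_1$ tested against $-A^*\ys\in\partial\nor{\xs}_1$ gives
\[
\nor{Ax_\alpha-\bbs}^2\le\langle \bb-\bbs,\,Ax_\alpha-\bbs\rangle+\tfrac{\alpha}{2}\langle\ys,\,Ax_\alpha-\bbs\rangle\le\bigl(\delta+\tfrac{\alpha}{2}\nor{\ys}\bigr)\nor{Ax_\alpha-\bbs}\enspace,
\]
hence $\nor{Ax_\alpha-\bbs}\le\delta+\tfrac{\alpha}{2}\nor{\ys}\le\delta+\alpha\nor{\ys}$. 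With that substitution for the first claim (or by accepting the weaker constant $2\delta+\alpha\nor{\ys}$, which would still suffice for the downstream corollary up to constants), your proof is complete.
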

The previous bounds, combined with \Cref{ass:CS} and the last inequality in \Cref{lem:CS}, lead naturally to the following corollary.
\begin{corollary}[\cite{grasmair2011necessary}, Theorem 5.6]\label{cor:Tyk}
Suppose \Cref{ass:CS} holds. Then, for $x_\alpha$ defined as in \Cref{eq:lasso} and $C:=\alpha/\delta$,
\begin{align*}
  & \norm{Ax_\alpha-\bbs}{} \leq \left(1+CW_s\right) \delta
\quad \quad \text{and} \\
 & \norm{x_{\alpha}-\xs}{}
     \leq Q_s \left(1+CW_s\right) \delta+ \frac{1+Q_s\norm{A}{}}{1-M_s} \frac{\left(1+CW_s/2\right)^2}{C} \delta.
\end{align*}
\end{corollary}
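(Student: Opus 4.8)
The plan is to chain the two results just recalled: the general Tikhonov estimates of \Cref{eq:lasso}, which bound the feasibility and the Bregman divergence of the regularized minimizer $x_\alpha$ in terms of an arbitrary dual certificate, together with the compressed-sensing bounds of \Cref{lem:CS}, which provide a specific certificate $\ys$ with controlled norm and a converse inequality translating feasibility and Bregman divergence into an estimate on $\norm{x_\alpha - \xs}{}$. No new computation beyond substitution is needed.

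First I would establish the feasibility bound. \Cref{eq:lasso} gives $\norm{Ax_\alpha - \bbs}{} \leq \delta + \alpha \norm{\ys}{}$ for a dual certificate $\ys$ satisfying $-A^*\ys \in \partial \nor{\cdot}_1(\xs)$. Using the particular $\ys$ furnished by \Cref{lem:CS}, for which $\norm{\ys}{} \leq W_s$, and substituting $\alpha = C \delta$, this becomes $\norm{Ax_\alpha - \bbs}{} \leq \delta + C \delta W_s = (1 + C W_s)\delta$, which is the first claim.

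Next I would bound the Bregman divergence. The second estimate of \Cref{eq:lasso} reads $D^{-A^*\ys}(x_\alpha, \xs) \leq (\delta + \alpha \norm{\ys}{}/2)^2 / \alpha$. Its right-hand side is increasing in $\norm{\ys}{} \geq 0$, so I may again replace $\norm{\ys}{}$ by the upper bound $W_s$; then substituting $\alpha = C \delta$ and factoring out one power of $\delta$ yields $D^{-A^*\ys}(x_\alpha, \xs) \leq \delta^2 (1 + C W_s/2)^2 / (C\delta) = \delta\, (1 + C W_s/2)^2 / C$. Finally, I would insert the two displayed bounds into inequality \eqref{ineq_CS} of \Cref{lem:CS}, namely $\norm{x - \xs}{} \leq Q_s \norm{Ax - \bbs}{} + \frac{1 + Q_s\norm{A}{}}{1 - M_s} D^{-A^*\ys}(x, \xs)$ evaluated at $x = x_\alpha$, which combines them into the second claim.

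The argument is thus essentially a direct substitution, so I do not expect a serious obstacle; the only points requiring care are that \Cref{eq:lasso} is valid for any dual certificate, so one must invoke the specific $\ys$ of \Cref{lem:CS} in order to import the norm bound $\norm{\ys}{} \leq W_s$ (together with the monotonicity in $\norm{\ys}{}$ of the Bregman estimate that legitimizes the replacement), and that the saturation-gap bound $m \leq M_s < 1$ of \Cref{lem:CS} is exactly what guarantees that the coefficient $1/(1 - M_s)$ appearing in \eqref{ineq_CS} is well defined and finite.
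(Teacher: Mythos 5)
Your proposal is correct and follows essentially the same route the paper intends: the paper presents this corollary as a direct consequence of \Cref{eq:lasso} combined with \Cref{lem:CS}, which is precisely your chain of substitutions (certificate bound $\norm{\ys}{}\leq W_s$, $\alpha = C\delta$, then inequality \eqref{ineq_CS}), and your arithmetic checks out. The two points of care you flag --- invoking the specific $\ys$ from \Cref{lem:CS} so that both the norm bound and the saturation-gap bound $m\leq M_s<1$ apply, and the monotonicity in $\norm{\ys}{}$ justifying the replacement by $W_s$ --- are exactly the right ones.
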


\section{Proofs of \Cref{sec:unfeas}}\label{app:unfeas}

\subsection{Proof of \Cref{lem:unfeas}}\label{app:unfeas_1}
\unfeas*
\begin{proof}
From \Cref{lem:cum1}, for any $(x, y)\in\X \times\Y$ and for any $k\in\N$, we have
	\begin{equation}
		\begin{split}
			& \frac{1-\tau_M\sigma_M\norm{A}{}^2}{2\tau_M}\nor{x_{k} - x}^2 + \frac{1}{2\sigma}\norm{y_{k} - y}{\Sigma}^2 +  \sum_{j=1}^{k}\left[\LL(x_j,y) - \LL(x, y_j)\right]
			+\frac{\omega}{2\tau_M}\sum_{j=1}^{k}\nor{x_j - x_{j - 1}}^2  \\ &\leq   V(z_0 - z)+\sum_{j=1}^{k}\varepsilon_{j},
		\end{split}
	\end{equation}
	where $\omega:= 1 - \tau_M(L+\sigma_M\nor{A}^2) \geq 0$ by \Cref{ass:omega}.
	Using Jensen's inequality, we get
	\begin{equation}
		\begin{split}
			& \LL(\hat{x}_k,y) - \LL(x, \hat{y}_k)  \leq  \frac{1}{k} \left[V(z_0 - z)+\sum_{j=1}^{+\infty}\varepsilon_{j}\right].
		\end{split}
	\end{equation}
	Let $\left(x_{\infty},y_{\infty}\right)$ be a weak cluster point of $(\hat{x}_k,\hat{y}_k)$; namely, there exists a subsequence $(\hat{x}_{k_j},\hat{y}_{k_j})\subseteq (\hat{x}_k,\hat{y}_k)$ such that
	$(\hat{x}_{k_j},\hat{y}_{k_j})\rightharpoonup(x_{\infty},y_{\infty})$.
	By weak lower-semicontinuity of $R$ and $F$, for every $(x, y)\in\X \times\Y$,
	\begin{equation}
		\begin{split}
			& \LL(x_{\infty},y) - \LL(x, y_\infty) \leq \liminf_{j} \LL(\hat{x}_{k_j},y) - \LL(x, \hat{y}_{k_j}) \leq \liminf_{j} \frac{1}{k_j} \left[V(z_0 - z)+\sum_{j=1}^{+\infty}\varepsilon_{j}\right]=0.
		\end{split}
	\end{equation}
	Thus $(x_{\infty},y_{\infty})$ is a saddle-point for the Lagrangian.\\
	Now suppose that the set of saddle-points of $\mathcal{L}$ is empty. Assume also, for contradiction, that $(\hat{x}_k,\hat{y}_k)$ does not diverge.
	Then we can extract a bounded subsequence, that consequently admits a weakly converging subsequence.
	But then, the limit is a saddle-point, which contradicts the assumption.
\end{proof}

\subsection{Proof of \Cref{lem:unfeas_sameit}}\label{app:unfeas_2}
\unfeassameit*

\begin{proof}

As $\tilde{\mathcal{C}}\neq \emptyset$, there exists $x^b\in\X$ such that $A^*Ax^b=A^*b$.
First consider the algorithm in \eqref{algo:easy}.
Note that, for every $k\in\N$, $\tilde{y}_k=y_k+\sigma\left(Ax_k-b\right)$ and multiply the last step by $A^*$.
We get, for every $k\in\mathbb{N}$,
\begin{equation*}
    \begin{split}
        x_{k+1} &=\prox^{}_{\tau R} (x_k- \tau \nabla F(x_k)-\tau A^*y_{k}-\sigma\tau A^*A(x_k-x^b))\\
        A^*y_{k+1}&=A^*y_{k}+\sigma A^*A(x_{k+1}-x^b).
    \end{split}
\end{equation*}
Recall that $S:=(A^*A)^{\frac{1}{2}}$ and introduce $p_k:=A^*y_k$.
Then the primal sequence $\seq{x_k}$ is equivalently defined by the following recursion: given $x_0$ and $p_{0}=A^*y_{0}$, for every $k\in\mathbb{N}$,
\begin{equation}\label{compare}
    \begin{split}
        x_{k+1} &=\prox^{}_{\tau R} (x_k- \tau \nabla F(x_k)-\tau p_{k}-\sigma\tau S^2(x_k-x^b))\\
        p_{k+1}&=p_{k}+\sigma S^2(x_{k+1}-x^b).
    \end{split}
\end{equation}
As $A^*y_{-1}$ belongs to $R(A^*)$ and $R(A^*)=R(S)$ \cite[Prop 2.18]{engl1996regularization}, there exists $v_{-1}$ such that $Sv_{-1}=A^*y_{-1}$.
Now consider the primal-dual algorithm applied to problem \eqref{leastsquares} starting at $u_0=x_0$, $v_{-1}$ and $v_0=v_{-1}+\sigma(Su_0-Sx^b)$.
It reads as: for every $k\in\N$,
\begin{equation*}
    \begin{split}
        \tilde{v}_k&=2v_k-v_{k-1}\\
        u_{k+1}&=\prox_{\tau R} (u_k-\tau \nabla F(u_k)-\tau S\tilde{v}_k)\\
        v_{k+1}&=v_k+\sigma(Su_{k+1}-Sx^b).
    \end{split}
\end{equation*}
Then, noticing that $\tilde{v}_k=v_k+\sigma\left(Su_k-Sx^b\right)$ and multiplying the last step by $S$,
\begin{equation*}
    \begin{split}
        u_{k+1}&=\prox_{\tau R} (u_k-\tau \nabla F(u_k)-\tau Sv_{k}-\sigma\tau S^2(u_k-x^b))\\
        Sv_{k+1}&=Sv_{k}+\sigma S^2(u_{k+1}-x^b) \enspace.
    \end{split}
\end{equation*}
Define the change of variable $q_k:=Sv_k$, so that $q_{-1}=Sv_{-1}=A^*y_{-1}$ and
$$q_0=Sv_0=S\left(v_{-1}+\sigma(Su_0-Sx^b)\right)=A^*(y_{-1}+\sigma(Ax_0-b))=A^*y_0=p_0.$$
Then the primal sequence $\seq{u_k}$ is alternatively defined by the following recursion: for every $k\in\mathbb{N}$,
\begin{equation}\label{compare2}
    \begin{split}
        u_{k+1}&=\prox_{\tau R} (u_k-\tau\nabla F(u_k)-\tau q_{k}-\sigma\tau S^2(u_k-x^b))\\
        q_{k+1}&=q_{k}+\sigma S^2(u_{k+1}-x^b) \enspace.
    \end{split}
\end{equation}
Comparing \Cref{compare} with \Cref{compare2}, with $(u_0,q_0)=(x_0,p_0)$, we get the claim.\\
\end{proof}

\subsection{Proof of \Cref{th:unfeas_norm}}
\label{app:unfeas_norm}
\unfeasstocazzo*
\begin{proof}
From \Cref{lem:unfeas_sameit}, we know that the sequence $\seq{\hat{x}_k}$ generated by \Cref{algo:easy} coincides with the primal iterate of a sequence $\seq{\hat{u}_k, \hat{v}_k}$ generated by the same algorithm on problem \eqref{leastsquares}.
Notice that $\nor{S} = \Vert (A^*A)^\frac{1}{2} \Vert = \nor{A}$ and so,
if \Cref{ass:omega} holds, the analogue also holds for problem \eqref{leastsquares}: namely, $1 - \tau(L + \sigma \nor{S}^2) \geq 0$.
The same is true for \Cref{ass:exist}. Indeed, defining $\bar{v}=S\tilde{v}$, $-S\bar{v}=-A^*A\tilde{v}\in\partial R(\tilde{x})+\nabla F(\tilde{x})$.
Moreover, we have seen already that $A^*Ax=A^*b$ if and only if $Sx=Sx^b$, where $x^b$ is any vector in $\X$ such that $A^*Ax^b=A^*b$.
Then, $S\tilde{x}=Sx^b$ and $(\tilde{x},\bar{v})$ is a saddle-point for \eqref{leastsquares}. So, by \Cref{prop:convergence_cp}, we know that the averaged primal-dual sequence $(\hat{u}_k,\hat{v}_k)$ weakly converges to a saddle-point for \eqref{leastsquares}. In particular, there exists $\tilde{x}_{\infty}\in \tilde{\PP}$ such that $\hat{u}_k\rightharpoonup \tilde{x}_{\infty}$ and so the same holds for $(\hat{x}_k)$. For the second claim, by assumption we have that $\PP=\emptyset$, which implies that $\mathcal{S}=\emptyset$.
All the assumptions of \Cref{lem:unfeas} are verified, so $(\hat{x}_k, \hat{y}_k)$ diverges.
As $\seq{\hat{x}_k}$ is weakly convergent and so bounded, we conclude that $\seq{\hat{y}_k}$ has to diverge.
\end{proof}
\subsection{Proof of \Cref{th:unfeas_stab}}\label{app:unfeas_3}
\unfeasstab*

\begin{proof}
From the assumption $\tilde{\mathcal{C}}^{\delta}\neq \emptyset$ and \Cref{lem:unfeas_sameit}, we know that the sequence $\seq{\hat{x}_k}$ coincides with the primal iterate of a sequence $\seq{\hat{u}_k, \hat{v}_k}$ generated by the same algorithm on problem \begin{equation}\label{Ptildedelta}
    \tilde{\PP}^{\delta}=\argmin_{x\in\X} \left\{ R(x)+F(x): \ \ Sx=Sx^{\delta} \right\},
\end{equation}
where $x^{\delta}$ is any vector in $\X$ such that $A^*Ax^{\delta}=A^*b^{\delta}$. As in the proof of the previous theorem, notice that $\nor{S} = \nor{A}$ and so,
as \Cref{ass:omega} and \Cref{ass:thetarho} hold by hypothesis, the analogue also holds for problem \eqref{Ptildedelta}: namely, $1 - \tau(L + \sigma \nor{S}^2) \geq 0$, $\xi - \tau(\xi L+ \sigma \nor{S}^2)\geq 0$ and $\sigma(\eta - 1) - \sigma \xi \eta>0$. The same is true for \Cref{ass:exist}. Indeed, define $\bar{v}=S\tilde{v}$. Then, from \Cref{optcond}, $-S\bar{v}=-A^*A\tilde{v}\in\partial R(\tilde{x})+\nabla F(\tilde{x})$ and $(\tilde{x},\bar{v})$ is a saddle-point for
\begin{equation}\label{eq2}
\tilde{\PP}^{\star}=\argmin_{x\in\X} \left\{ R(x)+F(x): \ \ Sx=S\tilde{x} \right\}.
\end{equation}
In particular, we can apply \Cref{prop:early_stop} for $\seq{\hat{u}_k, \hat{v}_k}$ - averaged primal-dual sequence generated on the noisy problem in \eqref{Ptildedelta} - with respect to $(\tilde{x},\bar{v})$ - saddle-point for the exact problem in \eqref{eq2} - to get that
\begin{equation*}
    D^{-S\bar{v}}(\hat{u}_k,\tilde{x}) \leq \frac{C_1}{k} +C_2 \tilde{\delta} + C_4 (\tilde{\delta})^2  k
\end{equation*}
and
\begin{equation*}
    \norm{S\hat{u}_k-S\tilde{x}}{}^2\leq \frac{C_5}{k} + C_6 \tilde{\delta}+  C_8 (\tilde{\delta})^2 k  + C_9 (\tilde{\delta})^2.
\end{equation*}
The constants in the previous bounds are the same as in \eqref{constants} with $z_0=(u_0,v_0)$, $z^{\star}=(\tilde{x},\bar{v})$, $C_3=C_7=0$ (because $C_0=0$ as we suppose $\varepsilon_k=0$ for every $k\in\N$), $\sigma_m=\sigma_M=\sigma$ and
\begin{equation*}
    \tilde{\delta}:=\|Sx^{\delta}-S\tilde{x}\|.
\end{equation*}
From \Cref{lem:unfeas_sameit}, we recall also that $u_0=x_0$ and $v_0=v_{-1}+\sigma(Su_0-Sx^{\delta})$, where $v_{-1}$ is any element in $\X$ such that $Sv_{-1}=A^*y_{-1}$ ($v_{-1}$ exists due to $R(A^*)=R(S)$). Now it remains to show that $\tilde{\delta}\leq \delta$. Denote by $(\mu_i, f_i,g_i )_{i\in\N} \subseteq \R_+ \times \X \times \Y$ the singular value decomposition of the operator $A$. First, notice that $S^2(x^{\delta}-\tilde{x})=A^*(b^{\delta}-b^{\star})$ and so that, for every $i\in\N$,
$$\mu_i^2 \langle x^{\delta}-\tilde{x},f_i\rangle=\mu_i \langle b^{\delta}-b^{\star}, g_i \rangle.$$
Then, for every $i\in\N$ such that $\mu_i\neq 0$, $\mu_i \langle x^{\delta}-\tilde{x},f_i\rangle=\langle b^{\delta}-b^{\star}, g_i \rangle$ and so
\begin{align*}
\tilde{\delta}^2 & = \|Sx^{\delta}-S\tilde{x}\|^2 =\sum_{i\in\N} \left( \mu_i \langle x^{\delta}-\tilde{x},f_i\rangle \right) ^2  =\sum_{\mu_i\neq 0} \left( \mu_i \langle x^{\delta}-\tilde{x},f_i\rangle \right) ^2 \\
& = \sum_{\mu_i\neq 0} \left( \langle b^{\delta}-b^{\star}, g_i \rangle  \right)^2 \leq \sum_{i\in\N} \left( \langle b^{\delta}-b^{\star}, g_i \rangle  \right)^2 = \|b^{\delta}-b^{\star}\|^2 \leq \delta^2.
\end{align*}
We conclude the claim simply by noticing that $$D^{-A^*A\tilde{v}}(\hat{x}_k,\tilde{x})=D^{-S\bar{v}}(\hat{u}_k,\tilde{x})$$
and
$$\norm{A^*A\hat{x}_k-A^*b^{\star}}{}=\norm{S^2\hat{u}_k-S^2 \tilde{x}}{}\leq \norm{S}{}\norm{S\hat{u}_k-S\tilde{x}}{}.$$
\end{proof}

\section{A dual view on the implicit bias of gradient descent on least squares}
\label{app:implicit_dual}

Here we provide an interesting view on why the ``implicit'' bias of gradient descent on least squares is not so implicit.
Recall that these iterations,
\begin{equation}\label{eq:gd_ls}
    x_{k+1} = x_k - \gamma A^* (A x_k - b) \enspace,
\end{equation}
converge, for $\gamma < 2 / \nor{A}^2_{\mathrm{op}}$, to the minimal Euclidean norm solution of $Ax = b$:
\begin{equation}\label{eq:min_norm}
    \min_{x \in \X} \frac12 \nor{x}^2     \quad \text{s.t.} \quad Ax = b \enspace,
\end{equation}
provided that Problem \eqref{eq:min_norm} is feasible and $x_0=0$.

It turns out that the iterations \eqref{eq:gd_ls} correspond, up to multiplication by $-A^*$, to the iterates of gradient descent to the dual of \eqref{eq:min_norm}, namely:
\begin{equation}\label{eq:dual_min_norm}
    \min_{y \in \Y} \frac12 \nor{A^* y}^2 + \langle b, y \rangle \enspace, \quad \text{and} \quad y_{k+1} = y_k - \gamma (A A^* y_k + b) \enspace.
\end{equation}
By setting $x_{k+1} =- A^* y_{k+1}$ one recovers the iterates of gradient descent on least squares \eqref{eq:gd_ls}.
Therefore the ``implicit bias'' of gradient descent on least squares is not so implicit:
its iterates $x_k$ are dual to iterates $y_k$ on Problem \eqref{eq:dual_min_norm}, which is itself the dual of Problem \eqref{eq:min_norm} in which the bias appears explicitly.

\bibliographystyle{spmpsci}      %
% \bibliography{biblio_iterreg}

	 %

\end{document}